\numberwithin{equation}{section}
\newcommand{\al}{\alpha}
\newcommand{\be}{\beta}
\newcommand{\de}{\delta}
\newcommand{\De}{\Delta}
\newcommand{\ep}{\epsilon}
\newcommand{\ga}{\gamma}
\newcommand{\Ga}{\Gamma}
\newcommand{\ka}{\kappa}
\newcommand{\la}{\lambda}
\newcommand{\La}{\Lambda}
\newcommand{\mi}{\mbox{\upshape Id}}
\newcommand{\mh}{\mathcal {H}}
\newcommand{\mk}{\mathcal {K}}
\newcommand{\mx}{\mathcal {X}}
\newcommand{\om}{\omega}
\newcommand{\ot}{\otimes}
\newcommand{\si}{\sigma}
\newcommand{\te}{\theta}
\newcommand{\ve}{\varepsilonup}
\newcommand{\vs}{\varsigmaup}
\newcommand{\ze}{\zeta}
\newcommand{\ds}[1]{{\displaystyle{#1}}}
\newcommand{\nb}[1]{{\bfseries #1}}
\newcommand{\mb}[1]{\mbox{\bfseries #1}}
\newcommand{\ms}[1]{\mbox{\sffamily #1}}
\newcommand{\bs}[1]{{\scriptsize\mbox{#1}}}
\newcommand{\lan}{\langle}
\newcommand{\ran}{\rangle}
\newcommand{\lb}{\left(}
\newcommand{\rb}{\right)}
\begin{document}

\newtheorem{theorem}{Theorem}[section]

\newtheorem{lemma}[theorem]{Lemma}

\newtheorem{corollary}[theorem]{Corollary}
\newtheorem{proposition}[theorem]{Proposition}

\theoremstyle{remark}
\newtheorem{remark}[theorem]{Remark}

\theoremstyle{definition}
\newtheorem{definition}[theorem]{Definition}

\newtheorem{example}[theorem]{Example}

\title[Drinfel'd doubles of the $n$-rank Taft algebras and knot invariants]{Drinfel'd doubles of the $n$-rank Taft algebras and a generalization of the Jones polynomial}

\author[Feng, Hu and Li]{Ge Feng, \quad Naihong Hu$^\star$ \quad and \quad Yunnan Li$^*$}
\address{School of Mathematical Sciences, Shanghai Key Laboratory of PMMP, East China Normal University,
	Shanghai 200241, China} \email{52170601010@stu.ecnu.edu.cn}
\address{School of Mathematical Sciences, Shanghai Key Laboratory of PMMP, East China Normal University,
	Shanghai 200241, China} \email{nhhu@math.ecnu.edu.cn}
\address{School of Mathematics and Information Science, Guangzhou University, Waihuan Road West 230, Guangzhou 510006, China} \email{ynli@gzhu.edu.cn}

\thanks{$^\star$ Supported in part by the NNSF of China (Grant Nos. 11771142, 11801394) and in part by the Science and Technology Commission of Shanghai Municipality (No. 18dz2271000).
}
\thanks{$^*$ Corresponding author, supported in part by the NNSF of China (Grant No. 12071094).}

\subjclass{Primary 17B37, 16T05, 16T25,  81R50, Secondary 57K14, 57K16}


\keywords{$n$-rank Taft algebra, Drinfeld double, knot invariants, a generalization of the Jones polynomial.}

\begin{abstract}
In the paper, we describe the Drinfel'd double structure of the $n$-rank Taft algebra and all of its simple modules, and then endow its $R$-matrices with an application to knot invariants. The knot invariant we get is a generalization of the Jones polynomial, in particular, it coincides with the Jones polynomial in rank $1$ case, while in rank $2$ case, it is the one-parameter specialization of the two-parameter unframed Dubrovnik polynomial, and in higher rank case it is the composite ($n$-power) of the Jones polynomial.
\end{abstract}

\maketitle

\section{Introduction}

Knots and $3$-manifolds are important research topics in low dimensional topology. In the early period, topologists found some classical knot invariants using some powerful tool, like homotopy groups or by means of elementary
algebraic combinatorics, like Alexander polynomials, later on, Jones,
Kauffman, HOMFLY polynomials appeared successively.
In the late 1980s, quantum invariants were discovered in the original work \cite{RT1,RT2}  and \cite{Tu,Wi}, where quantum groups and the relevant representations are used to construct knot invariants
and even invariants of $3$-manifolds. Many classical invariants of knots were recovered via the approach of quantum invariants. With the studies of quantum groups and their representation theories towards to categorifications,
the studies of knot theory have stepped into a new age (see \cite{Tu1, Ka, Oht, Ro, Kh, CR, LQR, WW, Tub}, etc.) and some important progress, for example, of the Labastida-Marino-Ooguri-Vafa conjecture in string theory, see \cite{LP}, and the volume conjecture in geometry, see \cite{CL}, \cite{CY} etc. and references therein.

In the paper, we adopt Kauffman-Radford's construction of the oriented regular
invariants arising from the oriented quantum algebras in \cite{KR}, which strongly depends on the
quasitrangular structure of quantum groups. The main source of
quasitriangular Hopf algebras is the Drinfel'd double,
proposed by Drinfel'd \cite{Dri}.
The oriented quantum algebras we used here are the Drinfel'd doubles of a certain kind of finite dimensional Hopf algebras, {\it the $n$-rank Taft algebras} we defined.
In 1971,  Taft \cite{Ta} found a famous non-semisimple Hopf algebra of prime square dimension.
That is the so-christened Taft algebra.  In 2001, Hu \cite{Hu2} defined the quantized universal enveloping algebras of abelian Lie algebras.
In particular, when $q$ is a root of unity, there exists a finite dimensional quotient Hopf algebra, called {\it the $n$-rank Taft algebra}. In rank $1$ case, it is the Taft algebra.

The paper is organized as follows. We first determine the Drinfel'd double $D(\mathscr{\bar A}_q(n))$ of the $n$-rank Taft algebra in Section 3 and give the sufficient and necessary condition for $D(\mathscr{\bar A}_q(n))$ to be a ribbon Hopf algebra with its ribbon element $\nu$ (Theorem \ref{ribb}). Also, we point out that
$\mathscr{\bar A}_q(n)$ is a Hopf 2-cocycle twist of the $n$-fold tensor product $\mathscr{\bar A}_q(1)^{\ot n}$ (we say the {\it rank $n$} Taft algebra) (Theorem \ref{twist}), thus they have equivalent Yetter-Drinfel'd module categories.
In Section 4, we recall the oriented quantum algebra and Kauffman-Radford's approach to regular isotopic invariants of knots and links, and improve the Kauffman-Radford construction to yield the ambient isotopic invariants, the oriented ambient invariants associated with this kind of ribbon Hopf algebras (Proposition \ref{wnor}).
Section 5 concentrates on exploring knot ambient invariants associated with the Drinfel'd double $D(\mathscr{\bar A}_q(n))$.

In Section 5, we first determine all simple $D(\mathscr{\bar A}_q(n))$-modules (Theorem \ref{dmod}), and give the sufficient and necessary condition for a simple $D(\mathscr{\bar A}_q(n))$-module to be self-dual (Theorem \ref{sdual}).
Our aim is to obtain a kind of quantum knot invariants subordinating to this Drinfel'd double, which is a generalization of the Jones polynomial. In rank $1$ case, we recover the Jones polynomial (Example \ref{ex1}). For the $2$-rank Taft algebra, we work with the $4$-dimensional self-dual simple $D(\mathscr{\bar A}_q(2))$-module and obtain a new semisimple $16\times 16$ braiding $R$-matrix with the minimal polynomial of degree $3$ (Example \ref{ex2}), from which we derive the same skein relation of Dubrovnik type. To our surprise, it is not related to the quantum groups of $BCD$ type, but the latter is! We thus investigate the endomorphism algebra $\ms{End}_{D(\bar{\mathscr{A}}_q(2))}\lb\bar{\mathscr{A}}_{(\ell-1)\ep_2,i_0(\ep_1+\ep_2)}^{\ot r}\rb$ of $D(\mathscr{\bar A}_q(n))$-module $\bar{\mathscr{A}}_{(\ell-1)\ep_2,i_0(\ep_1+\ep_2)}^{\ot r}$ with the dimension-square of the Temperley-Lieb algebra, which is different than the Birman-Wenzl-Murakami algebra (arising from the quantum groups of $BCD$ type) but contains its quotient as the proper subalgebra (Theorem \ref{end}, Proposition \ref{subalg}). These distinguished features stimulate us to compute some knot invariants for those knots or links with lower crossings following Kauffman-Radford's algorithm. The result indicates the square of the Jones polynomial in the rank $2$ case (Theorem \ref{sqj}). We argue it is true in generality in terms of the skein relations. As a generalization to higher rank, we can derive that the ambient isotopic invariant, associated to the double of $n$-rank Taft algebra, is the
composite ($n$-power) of the Jones polynomial (Corollary \ref{nrank}).

\section{Notation and Definitions}

Throughout the paper, let $k$ be an algebraically closed field of
characteristic $0$. Due to Manin \cite{Ma}, the
{\itshape quantum affine $n$-space} over $k$ is a quadratic algebra
$$
k[A_q^{n|0}]=k\bigl\{x_1,\ldots,x_n\bigr\}/(x_ix_j-qx_jx_i, \ i>j\,).
$$
Now let $\La=\bigoplus\limits_{i=1}^n\mathbb{Z}\ep_i$ be a free abelian group of $n$-rank, where $\ep_i=(\de_{1i},\ldots,\de_{ni})$, and $\de_{ij}$ is the Kronecker symbol.
Hence $\{\ep_i\}_{1\leq i\leq
n}$ is a  basis of $\La$. Besides, for $\al=(\al_1,\ldots,\al_n)$, $\be=(\be_1,\ldots,\be_n)\in\La$,
we denote $\de_{\al,\be}=\de_{\al_1,\be_1}\cdots\de_{\al_n,\be_n}$.

As in \cite{Hu1}, a skew bicharacter
$\te:\La\times\La\rightarrow k^\times$ on $\La$ is defined as follows,
\[\te(\al,\be)=q^{\al*\be-\be*\al}, \quad \al, \ \be\in\La, \quad q\in
k^\times,\]
where $\al*\be=\sum\limits_{j=1}^{n-1}\sum\limits_{i>j}\al_i\be_j$.
In particular,
\begin{equation}\te(\ep_i,\ep_j)=
\begin{cases}q, &i>j\,,
\\1, &i=j\,,
\\q^{-1}, &i<j
\end{cases}
\end{equation}
and
\[\te(\al,\be)^{-1}=\te(\be,\al)=\te(-\al,\be)=\te(\al,-\be),\quad\te(0,\al)=\te(\al,0)=\te(\al,\al)=1.\]

Now let $\La_+\coloneqq \bigl\{\al=
\sum\limits_{i=1}^n\al_i\ep_i\in\La \mid \al_i\in\mathbb{Z}^{\geq 0}\bigr\}$,
and $x^\al=x_1^{\al_1}x_2^{\al_2}\cdots x_n^{\al_n}$
be any non-zero monomial in $k[A_q^{n|0}]$,
then $\bigl\{x^\al\mid \al\in\La_+\bigr\}$ constitutes
a canonical basis of $k[A_q^{n|0}]$
. Moreover,
 $x^\al x^\be=q^{\al*\be}x^{\al+\be}=\te(\al,\be)x^\be
x^\al$, by definition.

For any $\al\in\La$, consider the algebra automorphism $K(\al)$ of $k[A_q^{n|0}]$ as follow,
\[K(\al)(x^\ga)=\te(\al,\ga)q^{\lan\al,\ga\ran}
x^\ga, \qquad x^\ga\in k[A_q^{n|0}],\]
where $\lan\cdot,\cdot\ran:\La\times\La\rightarrow
\mathbb{Z}$ is a symmetric bilinear form with $\lan\ep_i,\ep_j\ran=\de_{ij}$.

Apparently, $K(\al)K(\be)=K(\al+\be)=K(\be)K(\al)$. When $q=1$,
 $K(\al)=\ms{id}$.

As in \cite{Hu2}, let $\mathscr{A}_q(n)$ be an associative $k$-algebra with $1$,
 generated by the symbol $K(\ep_i)^{\pm 1},x_i~(1\leq i\leq n)$,
and associated with a skew bicharacter $\te$. It satisfies the relations
\[
\begin{array}{ll}
\ms{(r1)}~&K(\ep_i)K(\ep_j)=K(\ep_i+\ep_j)=K(\ep_j)K(\ep_i), \quad
K(\ep_i)^\pm K(\ep_i)^\mp=K(0)=1,\\
\ms{(r2)}~&K(\ep_i)^\ell=K(\ell\ep_i)=1, \quad \bigl(\,\ell=char(q)\,\bigr)\\
\ms{(r3)}~&K(\ep_i)x_jK(\ep_i)^{-1}=\te(\ep_i,\ep_j)q^{\de_{ij}}x_j,\\
\ms{(r4)}~&x_ix_j=\te(\ep_i,\ep_j)x_jx_i,
\end{array}
\]
where $\ms{char}(q)=\ms{min}\bigl\{r\in\mathbb{Z}^{>0}\mid q^r=1\bigr\}$. We view $\mathscr{A}_q(n)$ as the {\itshape quantized universal enveloping algebra of an abelian Lie algebra of dimension $n$}.
Obviously,
$\mathscr{A}_q(n)$ contains the quantum affine $n$-space $k[A_q^{n|0}]$ as its subalgebra.
We have the Hopf algebra structure of $(\mathscr{A}_q(n),\De,\ve,s)$ as follows
\[
\begin{array}{l}
\De: \mathscr{A}_q(n)\rightarrow\mathscr{A}_q(n)\ot\mathscr{A}_q(n), \quad
\De(K(\ep_i)^\pm)=K(\ep_i)^\pm\ot
K(\ep_i)^\pm, \quad \De(x_i)=x_i\ot1+K(\ep_i)\ot
x_i,\\
\ve:\mathscr{A}_q(n)\rightarrow
k, \quad \ve(K(\ep_i)^\pm)=1,\quad  \ve(x_i)=0,\\
s:\mathscr{A}_q(n)\rightarrow\mathscr{A}_q(n),\quad s(K(\ep_i)^\pm)=K(\ep_i)^\mp,\quad
s(x_i)=-K(\ep_i)^{-1}x_i.
\end{array}
\]

\medskip
For convenience of quotation, we also give some definitions about Hopf algebras.
\begin{definition}
Given a Hopf algebra $H$ over $k$, $\Ga\in H$ is called a {\it left integral} of $H$, if
$h\,\Ga=\ve(h)\,\Ga$, for any $h\in H$.
Similarly, we can define right integrals of $H$.
\end{definition}
Denote by $G(H)$ the group consisting of group-like elements of Hopf algebra $H$.
\begin{definition}
When $H$ is finite dimensional, there exists a unique group-like element $\al\in H^*$, satisfying
$\Ga\, h=\al(h)\,\Ga$, for any $h\in H$,
where $\Ga$ is any non-zero left integral of $H$.
$\al$ is called the {\itshape distinguished group-like element} of $H^*$.
\end{definition}

As the dual space of the finite dimensional Hopf algebra $H$, $H^*$ has a natural Hopf algebra structure, we also have the concept of left or right integral of $H^*$.
But distinct from that of $H$, it usually defines the distinguished group-like element $g\in H$ associated with a non-zero right integral $\la$ of $H^*$, that is,
$p\la=p(g)\la, \ \forall\; p\in H^*$.

Recall the notion of quasitriangular Hopf algebras (see \cite{Ka}, \cite{Mon}).
\begin{definition}
An {\itshape almost cocommutative Hopf algebra} over $k$ is a pair $(H,R)$, where $H$ is a Hopf algebra, and $R\in H\ot H$
is an invertible element, such that for arbitrary $h\in H$,
\begin{equation}\label{qc}
\tau(\De(h))=R\De(h)R^{-1},
\end{equation}
where $\tau$ is the flip map on $H\ot H$.

\smallskip
In addition, if $R$ satisfies:
$
(\De\ot\mi)R=R^{13}R^{23}$, $(\mi\ot\De)R=R^{13}R^{12}$,
$(H,R)$ is called a {\itshape quasitriangular Hopf algebra} over $k$, and $R$ is a {\itshape universal $R$-matrix} of $H$, where
$R^{12}=\sum\nolimits_ia_i\ot b_i\ot1,R^{23}=\sum\nolimits_i1\ot a_i\ot b_i,R^{13}=\sum\nolimits_ia_i\ot1\ot b_i$, if set $R=\sum\nolimits_ia_i\ot b_i$.
\end{definition}
If $(H,R)$ is quasitriangular, $R$ satisfies the quantum Yang-Baxter equation
\[R^{12}R^{13}R^{23}=R^{23}R^{13}R^{12}.\]

\begin{definition}
Given a quasitriangular Hopf algebra $(H,R)$, $v\in H$ is said a {\itshape quasi-ribbon element},
if it satisfies the conditions
\[
\begin{array}{l}
\quad v^2=us(u),
\quad s(v)=v,
\quad \ve(v)=1,
\quad \De(v)=(R^{21}R)^{-1}(v\ot v).
\end{array}
\]
where $R^{21}=\tau(R)$, and $u=\sum_is(b_i)a_i$, called the {\itshape Drinfel'd element}.
When $v$ lies in the center of $H$, it is called a {\itshape ribbon element}, and $(H,R,v)$ a {\itshape ribbon Hopf algebra} over $k$.
\end{definition}

Recall the definition of Drinfel'd double $D(H)$ for any finite dimensional Hopf algebra $H$ (see \cite[$\S 10.3$]{Mon}).
Denote
$D(H)=(H^*)^{\bs{\upshape cop}}\bowtie H$.
Its multiplication is defined as
\[(p\ot h)(q\ot k)=p\lb h_{(1)}\rightharpoonup q\leftharpoonup s^{-1}(h_{(3)})\rb\ot h_{(2)}k, \quad \forall\; h, k\in H, \; p, q\in H^*,\]
while the comultiplication coincides with that of $(H^*)^{\bs{\upshape cop}}\ot H$. Here $\rightharpoonup$ (resp. $\leftharpoonup$) denotes the left (resp. right) action of $H$ on $H^*$ given by
\[h\rightharpoonup p=p_{(2)}(h)p_{(1)}\, \quad (\text{resp. }p_{(1)}(h)p_{(2)}=p \leftharpoonup h).\]
$D(H)$ is quasitriangular, whose universal $R$-matrix is given by
\begin{equation}\label{ur}
\mathcal {R}=(\ve\ot h_i)\ot(h^i\ot 1),
\end{equation}
where $\{h_1,\ldots,h_r\}$ is a $k$-basis of $H$,
and $\{h^1,\ldots,h^r\}$ is the dual basis in $H^*$ with $r=\ms{dim}H$. Note that
$\mathcal {R}^{-1}=(\ve\ot s(h_i))\ot(h^i\ot 1)$,
where we used the Sweedler's notation:
$\De(h)=h_{(1)}\ot h_{(2)}, \ \forall\, h\in H$, as well as
 the Einstein's sum convention to omit the symbol ``$\sum$".


\begin{definition}
Given a Hopf algebra $H$, a vector space $M$ is called a {\itshape Yetter-Drinfel'd $H$-module}, if $M$ has a left $H$-module structure $(M,\cdot)$, and a right $H$-comodule structure $(M,\rho)$, such that two structures satisfy the compatible condition
\[h_{(1)}\cdot m_{(0)}\ot h_{(2)}m_{(1)}=(h_{(2)}\cdot m)_{(0)}\ot(h_{(2)}\cdot m)_{(1)}h_{(1)},
\quad \forall\; h\in H, \; m\in M\]
where $\rho(m)=m_{(0)}\ot m_{(1)}\in M\ot H$.
When $H^{\bs{\upshape op}}$ has antipode $\vs$, the condition above is equivalent to
\[\rho(h\cdot m)=h_{(2)}\cdot m_{(0)}\ot h_{(3)}m_{(1)}\vs(h_{(1)}),\quad \forall\; h\in H,\; m\in M.\]
\end{definition}

Denote by $_H\mathcal {Y}\mathcal {D}^H$ the category of the Yetter-Drinfel'd $H$-modules,
whose morphisms are homomorphisms of both left $H$-modules and right $H$-comodules. By a theorem of Majid, for a finite-dimensional Hopf algebra $H$, a $k$-vector space $M$ is a left $D(H)$-module if and only if  $M\in{_H\mathcal {Y}\mathcal {D}^H}$. As is known, $M$ has a right $H$-comodule structure $(M,\rho)$ if and only if it has a left $H^*$-module structure $(M,\cdot)$, which is defined by
\[p\cdot m=p(m_{(1)})m_{(0)},\quad \forall\; p\in H^*,\; m\in M.\]
Hence, the left $D(H)$-module structure of $M\in{_H\mathcal {Y}\mathcal {D}^H}$ is defined by
\[(p\ot h)\cdot m=p\cdot (h\cdot m)=p\lb h_{(3)}m_{(1)}\vs(h_{(1)})\rb h_{(2)}\cdot m_{(0)},\quad\forall\; h\in H, \;p\in H^*, \;m\in M.\]
In addition, for any finite-dimensional representation $(M,\pi)$ of $D(H)$, we get a solution of the Yang-Baxter equation
\[R_M=\tau\circ(\pi\ot\pi)(\mathcal {R})\in\ms{End}(M\ot M),\]
where $\tau$ is the flip map on $M\ot M$. That means
\[
\lb R_M\ot\mi_M\rb \lb\mi_M\ot R_M\rb \lb R_M\ot\mi_M\rb
=\lb\mi_M\ot R_M\rb \lb R_M\ot\mi_M\rb \lb\mi_M\ot R_M\rb.
\]
Explicitly, for $\forall\; m, n\in M$, we have
\begin{equation}
\label{eq0}
\begin{split}
R_M(m\ot n)&=(h^i\ot 1)\cdot n\ot(\ve\ot h_i)\cdot m\\
&=h^i(n_{(1)})n_{(0)}\ot h_i\cdot m=n_{(0)}\ot n_{(1)}\cdot m
\end{split}
\end{equation}

\section{Drinfel'd double of the $n$-rank Taft algebra}

When $q$ is a primitive $\ell$-th root of unity, i.e., $\ell=\ms{char}(q)<\infty$, there exists a Hopf ideal of $\mathscr{A}_q(n)$ generated by
$x_1^\ell,\cdots,x_n^\ell$, denoted by $I$ (cf. \cite{Hu2}). The quotient Hopf algebra \[\bar{\mathscr{A}}_q(n)=\mathscr{A}_q(n)/I,\]
is called the {\itshape $n$-rank Taft algebra}. When $n=1$, it is the Taft algebra. Furthermore, if $\ell=2$, it is the Sweedler algebra of dimension $4$.


Recall the Gauss integers as follows,
\[
\begin{array}{l}
(0)_q=0, \quad (0)_q!=1,\\
(m)_q=1+\cdots+q^{m-1}=(1-q^m)/(1-q),\\
(m)_q!=(1)_q\cdots(m)_q, \quad \forall\; m\in\mathbb{Z}^{>0},
\end{array}
\]
Notice that $(m)_q!\neq 0, \ 0\leq m<\ell$, and $(m)_q!= 0, \ m\geq \ell$.
One can define
\[{m\choose j}_q=\begin{cases}
\dfrac{(m)_q!}{(m-j)_q!(j)_q!},& \mbox{if} \quad 0\leq m<\ell,\\
0,&\mbox{otherwise}.\end{cases}\]

Endow $\La$ with a natural partial order $\leq$ as follows
\[\forall\; \al,\; \be\in\La, \quad \al\leq\be \quad \mbox{\it if and only if} \quad \al_i\leq\be_i, \quad  \forall\; 1\leq i\leq n.\]
For $\forall\; \al\in\La_+$,
define $\La_+^{\leq\al}=\big\{\,\be\in\La_+\mid \be\leq \al\,\big\}$,
and set
\[\ka=(\ell-1,\ldots,\ell-1), \quad\iota=(\ell,\ldots,\ell)\in\La_+.\]
Then $\big\{\,x^\ga K(\al)\mid \al, \ga\in\La_+^{\leq
\ka}\,\big\}$ is a $k$-basis of $\bar{\mathscr{A}}_q(n)$. Hence,
$\ms{dim}\bar{\mathscr{A}}_q(n)=\ell^{2n}$.

\medskip
Recall the Hopf algebra structure of $\bar{\mathscr{A}}_q(n)$.
Thanks to
\begin{equation*}
\De(x_i^{\ga_i})=\sum\limits_{\xi_i=0}^{\ga_i}{{\ga_i}\choose{\xi_i}}_q
x_i^{\ga_i-\xi_i}K(\ep_i)^{\xi_i}\ot x_i^{\xi_i},~i=1,\ldots,n,
\end{equation*}
combining with relations $\ms{(r1)}$~--~$\ms{(r4)}$, we have the explicit comultiplication formula of $\bar{\mathscr{A}}_q(n)$
\begin{equation} \label{eq:1}
\De(x^\ga K(\al))=\sum\limits_{\xi\in\La_+^{\leq\ga}}{\ga\choose\xi}
q^{-(\ga-\xi)*\xi}x^{\ga-\xi}K(\xi+\al)\ot x^\xi K(\al),
\end{equation}
where $\al,\ga\in\La_+^{\leq\ka},~\ds{{\ga\choose\al}=\prod\limits_{i=1}^n{\ga_i\choose\al_i}_q}$, together with the counit
\begin{equation} \label{eq:2}
\ve(x^\ga K(\al))=\de_{\ga,0}.
\end{equation}

Set $\pmb{1}=(1,\ldots,1)\in\La$.
By $\ms{(r1)}$~--~$\ms{(r4)}$, for $\forall\; \ga\in\La_+^{\leq\ka}$,
we can get
\begin{equation} \label{eq:3}
\begin{split}
s(x^\ga) & = (-1)^{|\ga|}q^{-\tfrac{1}{2}\lan\ga,\ga+\pmb{1}\ran}x^\ga
K(-\ga) \\
&=(-1)^{|\ga|}q^{\tfrac{1}{2}\lan\ga,\ga-\pmb{1}\ran}\te(\ga,\ga)^{-1}
K(-\ga)x^\ga,\\
s^2(x^\ga)&=q^{-|\ga|}x^\ga,
\end{split}
\end{equation}
where $|\ga|=\sum_{i=1}^n\ga_i$.

For the group-like elements of $\bar{\mathscr{A}}_q(n))$, it is easy to see that
\begin{proposition}
\label{m1}
$G(\bar{\mathscr{A}}_q(n))=\left\{K(\al)~\big|~\al\in\La_+^{\leq\ka}\right\}.$
In particular,
\begin{equation}\label{e1}
G(\bar{\mathscr{A}}_q(n))\cong G(T_\ell)^{\times n}\cong
\overbrace{\mathbb{Z}_\ell\times\cdots\times\mathbb{Z}_\ell}^n,
\end{equation}
where $T_\ell$ is the Taft algebra with dimension $\ell^2$.
\end{proposition}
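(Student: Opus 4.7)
The plan is to establish the set equality in two steps and then identify the group structure. The inclusion $\supseteq$ is immediate: substituting $\ga=0$ into the coproduct formula (3.1) gives $\De(K(\al))=K(\al)\otimes K(\al)$, and (3.2) gives $\ve(K(\al))=1$, so each $K(\al)$ is group-like, and invertibility is clear from (r1).

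For the reverse inclusion, I would grade the PBW-type basis $\{x^\ga K(\al)\}$ by the $x$-degree $|\ga|=\sum_{i}\ga_i$. A direct reading of (3.1) shows that every summand of $\De(x^\ga K(\al))$ has the form $x^{\ga-\xi}K(\xi+\al)\otimes x^\xi K(\al)$, so its two tensor factors carry $x$-degrees $|\ga|-|\xi|$ and $|\xi|$, which always sum to exactly $|\ga|$. Now write a general group-like element as $g=\sum_{\ga,\al}c_{\ga,\al}\,x^\ga K(\al)$ and set
\[
D=\max\bigl\{|\ga|:c_{\ga,\al}\neq 0\bigr\},\qquad g_D=\sum_{|\ga|=D}c_{\ga,\al}\,x^\ga K(\al).
\]
Then every tensor occurring in $\De(g)$ has total bidegree at most $D$, whereas the component of $g\otimes g$ of bidegree $(D,D)$ is $g_D\otimes g_D$, of total bidegree $2D$. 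Since $g$ is group-like it is nonzero, so $g_D\neq 0$; imposing $\De(g)=g\otimes g$ then forces $D=0$. Hence $g$ lies in the linear span of $\{K(\al)\}_{\al\in\La_+^{\leq\ka}}$, and because group-like elements in any Hopf algebra are linearly independent, comparison of $\De(g)=g\otimes g$ in this subspace forces $g$ to be a single $K(\al)$.

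For the isomorphism (3.4), relations (r1) and (r2) say that $\al\mapsto K(\al)$ is a surjective group homomorphism $\La\twoheadrightarrow G(\bar{\mathscr{A}}_q(n))$ with kernel $\ell\La$, yielding $G(\bar{\mathscr{A}}_q(n))\cong\La/\ell\La\cong\mathbb{Z}_\ell^{n}$; specializing $n=1$ gives $G(T_\ell)\cong\mathbb{Z}_\ell$, hence the displayed chain. The only non-bookkeeping step is the $(D,D)$-bidegree comparison in paragraph two, but since the coproduct (3.1) visibly preserves total $x$-degree, no genuine obstacle arises.
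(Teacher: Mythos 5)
Your proof is correct. The paper states this proposition without proof (``it is easy to see that''), so there is nothing to compare against; your argument is the standard one for a graded coalgebra: the coproduct \eqref{eq:1} preserves total $x$-degree, so the top-degree bidiagonal component of $g\otimes g$ forces any group-like element into degree zero, and linear independence of the distinct group-likes $K(\al)$ (they are part of the given basis of $\bar{\mathscr{A}}_q(n)$) then pins $g$ down to a single $K(\al)$. The only point worth making explicit is that the kernel of $\al\mapsto K(\al)$ is exactly $\ell\La$ because the $K(\al)$, $\al\in\La_+^{\leq\ka}$, are pairwise distinct basis elements; with that noted, the identification with $\La/\ell\La\cong\mathbb{Z}_\ell^{n}$ is complete.
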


Now consider the dual Hopf algebra $\bar{\mathscr{A}}_q(n)^*$,
with the canonical Hopf algebra structure induced by $\bar{\mathscr{A}}_q(n)$.
For any $\al\in\La$, $\be$, $\ga\in\La_+^{\leq\ka}$, set
\begin{gather*}
\mk(\al)\in\bar{\mathscr{A}}_q(n)^*, \qquad \mk
(\al)(x^\ga
K(\be))=\de_{\ga,0}q^{\lan\al,\be\ran},\\
\mx_i\in\bar{\mathscr{A}}_q(n)^*,\qquad \mx_i(x^\ga K
(\be))=\de_{\ga,\ep_i} \quad (i=1,\cdots,n).
\end{gather*}

It is easy to check that $\mk(\al)\in G\lb\bar{\mathscr{A}}_q(n)^*\rb$, and
\[\mk(\al)\mk(\be)=\mk(\al+\be), \quad \forall\; \al,\; \be\in\La.\eqno(*)\]
By the definition of $\mk(\al)$ and Eq. (\ref{eq:2}), we know that $\mk(0)=\ve$, which is the identity of $\bar{\mathscr{A}}_q(n)^*$. In addition, we claim that the map $\al\in\La_+^{\leq\ka}\mapsto \mk(\al)$ is injective. In fact, note that
\[\mk(\al)(K(\ep_i))=q^{\al_i},~i=1,\ldots,n.\]
Hence, $\mk(\al)=\ve,~-\ka\leq\al\leq\ka$ if and only if $\al=0$. Together with $(*)$,  the claim is true.

By Proposition \ref{m1}, we have
\begin{proposition}
\label{m2}
$G(\bar{\mathscr{A}}_q(n)^*)=\big\{\,\mk(\al)\mid\al\in\La_+^{\leq\ka}\,\big\}
\cong\overbrace{\mathbb{Z}_\ell\times\cdots\times\mathbb{Z}_\ell}^n.$
\end{proposition}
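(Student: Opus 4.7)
The plan is to exploit the standard bijection between group-like elements of $\bar{\mathscr{A}}_q(n)^*$ and algebra characters $\phi\colon \bar{\mathscr{A}}_q(n)\to k$. Since the preceding paragraph has already shown $\mk(\al)\in G(\bar{\mathscr{A}}_q(n)^*)$, established the product formula $\mk(\al)\mk(\be)=\mk(\al+\be)$, and proved the injectivity of $\al\mapsto\mk(\al)$ on $\La_+^{\leq\ka}$, it remains to verify the reverse inclusion: every character of $\bar{\mathscr{A}}_q(n)$ equals some $\mk(\al)$.

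To carry this out, I would let $\phi\colon\bar{\mathscr{A}}_q(n)\to k$ be an arbitrary algebra map and extract its values on the generators. First, relation $\ms{(r2)}$ gives $\phi(K(\ep_i))^\ell=\phi(1)=1$, so $\phi(K(\ep_i))$ is an $\ell$-th root of unity and can be written as $q^{a_i}$ with $a_i\in\mathbb{Z}_\ell$. Next, applying $\phi$ to relation $\ms{(r3)}$ with $i=j$ yields
\[\phi(x_i)=\te(\ep_i,\ep_i)q\,\phi(x_i)=q\,\phi(x_i),\]
and since $q\neq 1$ we must have $\phi(x_i)=0$ for every $i$. Consequently $\phi$ vanishes on every monomial $x^\ga K(\al)$ with $\ga\neq 0$, while on the group-like part $\phi(K(\al))=\prod_iq^{a_i\al_i}=q^{\lan\al_\phi,\al\ran}$ where $\al_\phi\coloneqq\sum_i a_i\ep_i\in\La_+^{\leq\ka}$. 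Comparing with the defining formula for $\mk(\al_\phi)$, we conclude $\phi=\mk(\al_\phi)$, which proves the set equality.

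For the group isomorphism, I would combine the multiplicativity $(*)$ with the observation that $\mk(\ell\ep_i)(x^\ga K(\be))=\de_{\ga,0}q^{\ell\lan\ep_i,\be\ran}=\de_{\ga,0}=\ve(x^\ga K(\be))$, so each generator $\mk(\ep_i)$ has order dividing $\ell$; together with injectivity and $|\La_+^{\leq\ka}|=\ell^n$, this forces $G(\bar{\mathscr{A}}_q(n)^*)\cong\mathbb{Z}_\ell^{\times n}$ via $\mk(\al)\mapsto(a_1,\ldots,a_n)\ \mathrm{mod}\ \ell$.

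I do not anticipate a genuine obstacle here; the argument is parallel in spirit to Proposition \ref{m1}, with the roles of $K(\ep_i)$ and the dual coordinates exchanged. The only mildly delicate point is ensuring that the commutation relation $\ms{(r3)}$ actually forces $\phi(x_i)=0$, which is where the assumption that $q$ is a primitive $\ell$-th root of unity (hence $q\neq 1$) enters decisively; if one were sloppy and used a relation like $K(\ep_j)x_iK(\ep_j)^{-1}=\te(\ep_j,\ep_i)x_i$ with $j\neq i$ and $q=1$ in some degenerate specialization, the vanishing would fail, so I would emphasize the diagonal $i=j$ case.
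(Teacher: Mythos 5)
Your proof is correct and follows essentially the same route as the paper: identify group-like elements of $\bar{\mathscr{A}}_q(n)^*$ with algebra characters of $\bar{\mathscr{A}}_q(n)$, show they vanish on the $x_i$ and take $\ell$-th roots of unity on the $K(\ep_i)$, hence coincide with some $\mk(\al)$, and then use the already-established injectivity and multiplicativity to get $\mathbb{Z}_\ell^{\times n}$. The only variation is in how $\phi(x_i)=0$ is obtained: the paper uses the nilpotency $x_i^\ell=0$ (so $\chi(x_i)^\ell=0$), while you use the conjugation relation $\ms{(r3)}$ with $i=j$, which works equally well since $q\neq 1$.
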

\begin{proof}
First we see that any group-like element of $\bar{\mathscr{A}}_q(n)^*$ restricted to $G(\bar{\mathscr{A}}_q(n))$ is a group character. Since for $\forall\; \chi\in G(\bar{\mathscr{A}}_q(n)^*)$,
\[\chi(K(\al)K(\be))=\De^*(\chi)(K(\al)\ot K(\be))=\chi(K(\al))\chi(K(\be)),~\chi(1)=\ve^*(\chi)=1.\]
By Dedekind's lemma \cite[p.19]{Mo} about linear independence of distinct group characters,
this means the linear independence of distinct group-like elements.
By isomorphism (\ref{e1}), the character group of $G(\bar{\mathscr{A}}_q(n))$ is exactly
$G(\bar{\mathscr{A}}_q(n)^*)=\big\{\,\mk(\al)\mid\al\in\La_+^{\leq\ka}\,\big\}
\cong\overbrace{\mathbb{Z}_\ell\times\cdots\times\mathbb{Z}_\ell}^n$.

Meanwhile, for $\,\forall\; \chi\in G(\bar{\mathscr{A}}_q(n)^*)$, as $x_i^\ell=0$, we have
\[\chi(x_i^\ell)=(\De^*)^{(\ell-1)}(\chi)(x_i^{\ot\ell})=\chi(x_i)^\ell=0.\]
Hence, $\chi(x_i)=0,~i=1,\ldots,n$, leading to
\[\chi(x^\ga)=\de_{\ga,0},~\forall \ga\in\La_+^{\leq\ka}.\]
That means $\chi$ has the form $\mk(\al)$, $\al\in\La_+^{\leq\ka}$.
\end{proof}

\begin{remark}
Both group algebras $k[G(\bar{\mathscr{A}}_q(n))]$ and $k[G(\bar{\mathscr{A}}_q(n)^*)]$
are isomorphic to $k^{\times\ell^n}$ as Hopf algebras, we can explicitly
describe the dual basis of $G(\bar{\mathscr{A}}_q(n)^*)=\big\{\,\mk(\al)\mid\al\in\La_+^{\leq\ka}\,\big\}$
in $k[G(\bar{\mathscr{A}}_q(n)^*)]\cong \lb k[G(\bar{\mathscr{A}}_q(n))]\rb^*$.
\end{remark}

Let us list explicit information on the dual Hopf algebra $(\bar{\mathscr{A}}_q(n)^*,\De^*,\ve^*,s^*)$.

The multiplication of $\bar{\mathscr{A}}_q(n)^*$ is induced from the comultiplication of $\bar{\mathscr{A}}_q(n)$ as follows
\begin{gather*}
\mk(\al)\mk(\be)=\mk(\al+\be)=\mk(\be)\mk(\al),\qquad\mk(\al)^\ell=\ve,\\
\mk(\al)\mx_i=q^{\lan\al,\ep_i\ran}\mx_i\mk(\al),\\
\mx_i\mx_j=\te(\ep_j,\ep_i)\mx_j\mx_i,\qquad \mx_i^\ell=0,
\end{gather*}
where $\al, \;\be\in\La, \ i, j=1,\ldots,n$.

Furthermore, put $\mx^\al=\mx_1^{\al_1}\cdots\mx_n^{\al_n}, \ \al\in\La_+^{\leq\ka}$,
then
\begin{equation}
\label{eq:4}
\mx^\ga(x^\eta K(\be))=\de_{\ga,\eta},\quad \forall\; \eta, \;\be\in\La_+^{\leq\ka},
\end{equation}
and
\begin{equation}
\label{eq:5}
\mx^\ga\mx^\eta=q^{-\ga*\eta}\mx^{\ga+\eta}=\te(\eta,\ga)\mx^\eta\mx^\ga,\quad \mk(\al)\mx^\ga=q^{\lan\al,\ga\ran}\mx^\ga\mk(\al).
\end{equation}
Hence, for $\,\forall\; \al,\, \be,\, \ga,\, \eta\in\La_+^{\leq\ka}$,
\[\mx^\ga\mk(\al)\lb x^\eta K(\be)\rb=q^{\lan\al,\be\ran}\de_{\ga,\eta}.\]

By Proposition \ref{m2} and Eq. (\ref{eq:4}),
$\big\{\mx^\ga\mk(\al) \mid\al,\ga\in\La_+^{\leq\ka}\,\big\}$
is a $k$-basis of $\bar{\mathscr{A}}_q(n)^*$. Hence, let $\mh(\al)\in \lb k[G(\bar{\mathscr{A}}_q(n))]\rb^*,\,\al\in\La_+^{\leq\ka}$ such that $\mh(\al)(K(\be))=\de_{\al,\be}$, we have
\begin{proposition}\label{dual}
The dual basis corresponding to
$\big\{\,x^\ga K(\al)\mid\al,\ga\in\La_+^{\leq\ka}\,\big\}$
in $\bar{\mathscr{A}}_q(n)^*$ is
\[\bigl\{\,\mx^\ga\mh(\al)\mid\al,\ga\in\La_+^{\leq\ka}\,\bigr\}.\]
\end{proposition}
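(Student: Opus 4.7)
The plan is to verify directly that $(\mx^\ga\mh(\al))(x^\eta K(\be))=\de_{\ga,\eta}\de_{\al,\be}$ for all $\al,\be,\ga,\eta\in\La_+^{\leq\ka}$, which will simultaneously show linear independence and identify the claimed dual basis. Since $\ms{dim}\bar{\mathscr{A}}_q(n)^*=\ms{dim}\bar{\mathscr{A}}_q(n)=\ell^{2n}$ matches the cardinality of $\bigl\{\mx^\ga\mh(\al)\bigr\}_{\al,\ga\in\La_+^{\leq\ka}}$, the basis assertion follows from the duality identity by a dimension count.

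Before computing, I would fix the canonical extension of $\mh(\al)$ from $\lb k[G(\bar{\mathscr{A}}_q(n))]\rb^*$ to all of $\bar{\mathscr{A}}_q(n)^*$ by declaring $\mh(\al)(x^\xi K(\be))=\de_{\xi,0}\de_{\al,\be}$, which is consistent with its original definition on group-likes and vanishes on the augmentation ideal generated by the $x_i$. This specification is the natural one because the Hopf projection onto $k[G(\bar{\mathscr{A}}_q(n))]$ sends $x^\xi K(\be)$ to $\de_{\xi,0}K(\be)$.

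The core computation uses that multiplication in $\bar{\mathscr{A}}_q(n)^*$ is dual to the comultiplication (\ref{eq:1}):
\[
(\mx^\ga\mh(\al))(x^\eta K(\be))=\sum_{\xi\in\La_+^{\leq\eta}}\binom{\eta}{\xi}q^{-(\eta-\xi)*\xi}\,\mx^\ga\bigl(x^{\eta-\xi}K(\xi+\be)\bigr)\,\mh(\al)\bigl(x^\xi K(\be)\bigr).
\]
By the extended definition of $\mh(\al)$, the second factor forces $\xi=0$ and $\al=\be$. When $\xi=0$, the first factor becomes $\mx^\ga(x^\eta K(\be))=\de_{\ga,\eta}$ by (\ref{eq:4}), and the binomial-prefactor reduces to $\binom{\eta}{0}q^0=1$. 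Thus the whole sum collapses to $\de_{\ga,\eta}\de_{\al,\be}$, as required.

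The step that needs a little care, rather than genuine difficulty, is making sure the convolution sum really does collapse: one must check that all terms with $\xi\neq 0$ are killed by $\mh(\al)$ and that the surviving term picks out exactly $\de_{\ga,\eta}$ via (\ref{eq:4}), rather than producing some spurious $q$-power. This is where the choice of extension of $\mh(\al)$ to the augmentation ideal matters, and it is the only place where a subtlety could creep in. Once this is confirmed, the duality pairing identity together with the dimension count $|\La_+^{\leq\ka}|^2=\ell^{2n}=\ms{dim}\bar{\mathscr{A}}_q(n)^*$ finishes the proof.
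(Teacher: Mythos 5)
Your proof is correct and is essentially the argument the paper leaves implicit: the paper states the proposition as an immediate consequence of the pairing formula $\mx^\ga\mk(\al)\lb x^\eta K(\be)\rb=q^{\lan\al,\be\ran}\de_{\ga,\eta}$ together with the passage from the $\mk(\al)$'s to their Fourier-dual combinations $\mh(\al)$, and your convolution computation via \eqref{eq:1} and \eqref{eq:4} is exactly that calculation written out. The only cosmetic point is that your ``extension by declaration'' $\mh(\al)(x^\xi K(\be))=\de_{\xi,0}\de_{\al,\be}$ is not a choice but is forced, since $\mh(\al)=\ell^{-n}\sum_{\ga}q^{-\lan\ga,\al\ran}\mk(\ga)$ already lies in $k[G(\bar{\mathscr{A}}_q(n)^*)]\subset\bar{\mathscr{A}}_q(n)^*$ and each $\mk(\ga)$ kills $x^\xi K(\be)$ for $\xi\neq0$.
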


Again, the comultiplication of $\bar{\mathscr{A}}_q(n)^*$ is induced from the multiplication of $\bar{\mathscr{A}}_q(n)$
\begin{gather*}
\De(\mk(\al))=\mk(\al)\ot\mk(\al),\quad \al\in\La,\\
\De(\mx_i)=\mx_i\ot\ve+\tilde\mk(\ep_i)\ot\mx_i.
\end{gather*}

\begin{remark}
If we introduce $\tilde\mk(\be)\in\bar{\mathscr{A}}_q(n)^*$, $\,\forall\, \be\in\La$, such that
\[\tilde\mk(\be)\lb x^\ga K(\al)\rb=\de_{\ga,0}\te(\al,\be)q^{\lan\al,\be\ran},\qquad \forall\; \ga,\al\in\La_+^{\leq\ka},\]
then the map $\be\in\La_+^{\leq\ka}\mapsto\tilde\mk(\be)$ may fail to be injective.
In fact,
\[\tilde\mk(\be)=\mk(\tilde\be)\in G(\bar{\mathscr{A}}_q(n)^*),\]
where we define a homomorphism of abelian groups
\[\mbox{\~{}}:\La\rightarrow\La,~\be\mapsto \tilde\be,\]
such that $\tilde\be_i=\be_1+\cdots+\be_i-\be_{i+1}-\cdots-\be_n$ $(i=1,\cdots,n-1)$, and $\tilde\be_n=\be_1+\cdots+\be_n$.
Note that $\tilde\be\equiv0~(\ms{mod}~\iota)$ if and only if $2\be_i\equiv0~(\ms{mod}~\ell)$ ($i=2,\cdots,n$) and $|\be|\equiv0~(\ms{mod}~\ell)$. Hence, when $\ell$ is odd, it holds if and only if $\be\equiv0~(\ms{mod}~\iota)$.
When $\ell$ is even, it holds if and only if there exist even numbers of $\be_i$, satisfying $\be_i\equiv\ell/2~(\ms{mod}~\ell)$, while others equal to $0$ modulo $\ell$.
The injectivity fails in the case when $\ell$ is even.
\end{remark}

Now by induction,
\begin{equation}
\begin{split}
\De(\mx^\ga\mk(\be))&=\sum\limits_{\al\in\La_+^{\leq\ga}}{\ga\choose\al}
q^{(\ga-\al)*\al}\mx^{\ga-\al}\tilde\mk(\al)\mk(\be)\ot\mx^\al\mk(\be)\\
&=\sum\limits_{\al\in\La_+^{\leq\ga}}{\ga\choose\al}
q^{(\ga-\al)*\al}\mx^{\ga-\al}\mk(\tilde\al+\be)\ot\mx^\al\mk(\be),
\end{split}
\end{equation}
while
\[\tilde\mk(\al)\mx^\ga=\te(\ga,\al)q^{\lan\ga,\al\ran}\mx^\ga\tilde\mk(\al).\]
In addition,
\[\ve^*(\mk(\al))=\mk(\al)(1)=1,\quad \ve^*(\mx_i)=\mx_i(1)=0.\]
Finally, we can give the formula of the antipode $s^*$
\[s^*(\mk(\al))=\mk(\al)\circ s=\mk(\al)^{-1}, \quad \al\in\La,\]
\[
\begin{split}
s^*(\mx^\ga)&=\mx^\ga\circ s=(-1)^{|\ga|}q^{-\tfrac{1}{2}\lan\ga,\ga+\pmb{1}\ran}\mx^\ga
\tilde\mk(-\ga)\\
&=(-1)^{|\ga|}q^{\tfrac{1}{2}\lan\ga,\ga-\pmb{1}\ran}
\tilde\mk(-\ga)\mx^\ga, \quad\ga\in\La_+^{\leq\ka}.
\end{split}
\]
We denote $s^*$ as $S$ below.

\begin{remark}
From Eq. (\ref{eq:5}), we can see that $\bar{\mathscr{A}}_q(n)^*$ contains $k[A_{\bar{q}}^{n|0}]$ as its subalgebra where $\bar{q}=q^{-1}$, and it is not isomorphic to $\bar{\mathscr{A}}_q(n)$ when $n>1$.
 That is quite different from the self-duality of the Taft algebra $T_\ell$, i.e. $T_\ell\cong {T_\ell}^*$. This owes to the skew primitive elements $x_i$'s, satisfying the relation {\sffamily (r4)}, which enriches the Hopf algebra structure of $\bar{\mathscr{A}}_q(n)$.
\end{remark}

After the preparation above, we begin to study the Drinfel'd double of $\bar{\mathscr{A}}_q(n)$,
\[D\lb\bar{\mathscr{A}}_q(n)\rb=\lb\bar{\mathscr{A}}_q(n)^*\rb^\bs{cop}\bowtie \bar{\mathscr{A}}_q(n),\]
briefly denoted by $D(\bar{\mathscr{A}})$. To end this section, we consider the question when $D(\bar{\mathscr{A}})$ is a ribbon Hopf algebra.

By \cite[Thm. 3]{KR1}, if $H$ is a finite dimensional Hopf algebra over $k$, with the antipode $s$,
and $g,~\al$ is the distinguished group-like element of $H, \,H^*$, respectively, then

(a) $(D(H),\mathcal {R})$ has a quasi-ribbon element if and only if there exist $l\in G(H)$ and $\be\in G(H^*)$, such that $l^2=g$ and $\be^2=\al$.

(b) $(D(H),\mathcal {R})$ has a ribbon element if and only if there exist $l\in G(H)$ and $\be\in G(H^*)$, satisfying conditions in (a), such that
\[s^2(h)=l(\be\rightharpoonup h\leftharpoonup\be^{-1})l^{-1},\quad \forall\, h\in H.\]

\smallskip
In addition, by \cite[Cor. 3]{KR1},
if $H$ is a finite dimensional Hopf algebra over $k$,
and $G(H)$, $G(H^*)$ are both of odd order,
 $(D(H),\mathcal{R})$ has a ribbon element (necessarily unique) if and only if $s^2$ is an automorphism of odd order.

\smallskip
Especially, for the Taft algebra $T_\ell$, $G(T_\ell)$ is a cyclic group of $\ell$ order generated by the distinguished  group-like element, so is $G(T_\ell^*)$.
 Besides, $s^2$ is also of $\ell$ order.
 Hence, when $\ell$ is odd, $D(T_\ell)$ becomes a ribbon Hopf algebra.
 Finally, by \cite[Prop. 7]{KR1}, $D(T_\ell)$ is a ribbon Hopf algebra if and only if $\ell$ is odd.

\smallskip
Now we turn to the $n$-rank Taft algebras. Refer the proof of \cite[Prop. 7]{KR1} to give a more general conclusion.
For simplicity, we omit the summation range indexed on the $\Sigma$'s below, with the default option, summation over $\La_+^{\leq\ka}$.

First, we need to compute the left integral of $\bar{\mathscr{A}}_q(n)$, the right integral of $\bar{\mathscr{A}}_q(n)^*$, and also the distinguished group-like elements.
Take any $\Ga=\sum\nolimits_{\ga,\al}a_{\ga,\al}x^{\ga}K(\al)\in \bar{\mathscr{A}}_q(n)$, and let
\begin{gather*}
K(\be)\,\Ga=\sum\nolimits_{\ga,\al}a_{\ga,\al}\te(\be,\ga)q^{\lan\be,\ga\ran}x^\ga K(\al+\be)=\ve(K(\be))\Ga=\Ga,\quad \forall\, \be\in\La_+^{\leq\ka},\\
x^\eta\,\Ga=\sum\nolimits_{\ga,\al}a_{\ga,\al}q^{\eta*\ga}x^{\ga+\eta}K(\al)=\ve(x^{\eta})\Ga=\de_{\eta,0}\Ga,\quad \forall\, \eta\in\La_+^{\leq\ka}.
\end{gather*}
By the second equation above, we see that $a_{\ga,\al}=0, \quad \forall\, \ga\neq\ka$. It follows from the first equation that
$a_{\ka,\al}=a_{\ka,0}\te(\al,\ka)q^{\lan\al,\ka\ran}$, $\forall\, \al\in\La_+^{\leq\ka}$.
Hence, $\bar{\mathscr{A}}_q(n)$ has a non-zero left integral
\begin{equation}
\Ga=x^\ka\sum\nolimits_\al\te(\al,\ka)q^{\lan\al,\ka\ran}K(\al).
\end{equation}

Similarly, we obtain a non-zero right integral of $\bar{\mathscr{A}}_q(n)^*$
\begin{equation}
\la=\mx^\ka\sum\nolimits_\al\mk(\al),
\end{equation}
and the distinguished group-like elements of $\bar{\mathscr{A}}_q(n)$ and  $\bar{\mathscr{A}}_q(n)^*$ are respectively given by
\begin{equation*}
\begin{split}
\Ga  x^\eta K(\be)&=x^\ka\sum\nolimits_\al\te(\al,\ka+\eta)q^{\lan\al,\ka+\eta\ran}x^\eta K(\al+\be)\\
&=x^{\ka+\eta}\sum\nolimits_\al\te(\al,\ka+\eta)q^{\lan\al,\ka+\eta\ran+\ka*\eta}K(\al+\be)\\
&=\de_{\eta,0}\te(\ka,\be)q^{-\lan\be,\ka\ran}\Ga=\de_{\eta,0}\te(\be,\pmb{1})q^{\lan\be,\bs{\bfseries
1}\ran}\Ga\\
&=\tilde\mk(\pmb{1})\lb x^\eta K(\be)\rb\Ga,
\end{split}
\end{equation*}
\begin{equation*}
\begin{split}
 \mx^\eta\mk(\be)\la &=q^{\lan\ka,\be\ran}\mx^\eta\mx^\ka\sum\nolimits_\al\mk(\al+\be)\\
&=q^{\lan\ka,\be\ran-\eta*\ka}\mx^{\eta+\ka}\sum\nolimits_\al\mk(\al+\be)\\
&=\de_{\eta,0}q^{\lan\ka,\be\ran}\la\\
&=\mx^\eta\mk(\be)\lb K(\ka)\rb\la.
\end{split}
\end{equation*}
Hence, the distinguished group-like element of $\bar{\mathscr{A}}_q(n)$ is $K(\ka)$, while the one of $\bar{\mathscr{A}}_q(n)^*$ is $\tilde\mk(\pmb{1})$.
Now we have
\begin{theorem}\label{ribb}
The Drinfel'd double $D(\bar{\mathscr{A}})$ of $\bar{\mathscr{A}}_q(n)$ is a ribbon Hopf algebra if and only if $\ell$ is odd.
In this situation, $D(\bar{\mathscr{A}})$ has the unique ribbon element
\begin{equation}\label{rib}
v=u\lb\tilde\mk\lb(\iota+\pmb{1})/2\rb^{-1}\ot K(\ka/2)^{-1}\rb=u\lb\tilde\mk(\ka/2)\ot K\lb(\iota+\pmb{1})/2\rb\rb,
\end{equation}
where $u\in D(\bar{\mathscr{A}})$ is the Drinfel'd element.
 \end{theorem}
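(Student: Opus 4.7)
My approach is to invoke the Kauffman–Radford criterion \cite[Thm.~3]{KR1} recalled just above. The distinguished group-like elements have already been computed: $g = K(\ka)$ in $\bar{\mathscr{A}}_q(n)$ and $\al = \tilde\mk(\pmb{1})$ in $\bar{\mathscr{A}}_q(n)^*$, and by Propositions \ref{m1}, \ref{m2} both groups are isomorphic to $(\mathbb{Z}_\ell)^n$. For the ``only if'' direction, suppose $\ell$ is even. Any $l = K(\be)$ with $l^2 = K(\ka)$ would force $2\be\equiv\ka\pmod\iota$, i.e.\ $2\be_i\equiv\ell-1\pmod\ell$; but $2\be_i$ is even modulo an even $\ell$ while $\ell-1$ is odd, a contradiction. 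So $D(\bar{\mathscr{A}})$ fails to admit even a quasi-ribbon element, let alone a ribbon element.

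For the ``if'' direction with $\ell$ odd, the natural candidates are $l := K(\ka/2)$ and $\be := \tilde\mk((\iota+\pmb{1})/2)$, well-defined because $(\ell\pm 1)/2\in\mathbb{Z}$. The conditions $l^2 = g$ and $\be^2 = \al$ are immediate, using $\tilde\mk(\iota) = \mk(\tilde\iota) = \ve$ (as $\tilde\iota\in\ell\La$). The remaining identity $s^2(h) = l(\be\rightharpoonup h\leftharpoonup\be^{-1})l^{-1}$ is between algebra homomorphisms of $\bar{\mathscr{A}}_q(n)$, so it suffices to check on generators. On group-like $K(\al)$ it is tautological. For $x_j$, the left side is $q^{-1}x_j$ by \eqref{eq:3}; on the right, $\De(x_j) = x_j\ot 1 + K(\ep_j)\ot x_j$ together with $\be(x_j) = 0$ (Proposition \ref{m2}) gives $\be\rightharpoonup x_j\leftharpoonup\be^{-1} = \be(K(\ep_j))^{-1}x_j$, while iterating \ms{(r3)} produces $l x_j l^{-1} = q^{(n-2j+2)(\ell-1)/2}x_j$.

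The main technical obstacle is then the resulting scalar identity. A direct expansion of $\tilde\mk$ yields $\be(K(\ep_j)) = q^{(2j-n)(\ell+1)/2}$, so the claim reduces to the congruence
\[(n-2j+2)(\ell-1)/2 \,-\, (2j-n)(\ell+1)/2 \,\equiv\, -1 \pmod\ell,\]
which, upon clearing the factor of $2$, collapses to $(n-2j+1)\ell - 1 \equiv -1\pmod\ell$. The key simplifying facts are $2\cdot(\ell\pm 1)/2\equiv \pm 1\pmod\ell$, and the only bookkeeping nuisance is the piecewise definition of $\tilde\al_i$ (cases $i<n$ versus $i=n$), handled uniformly by $\tilde\al_i = \al_1+\cdots+\al_i - \al_{i+1}-\cdots-\al_n$ with the convention that the tail sum is empty for $i=n$.

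Uniqueness follows from \cite[Cor.~3]{KR1}, since $|G(\bar{\mathscr{A}}_q(n))| = |G(\bar{\mathscr{A}}_q(n)^*)| = \ell^n$ is odd and $s^2$ has odd order $\ell$ (by \eqref{eq:3} applied at $\ga=\ep_j$). Finally, the Kauffman–Radford recipe yields $v = u(\be^{-1}\bowtie l^{-1})$, which is the first expression in \eqref{rib}; the second expression follows from $-(\iota+\pmb{1})/2\equiv \ka/2\pmod\iota$, which forces $\tilde\mk((\iota+\pmb{1})/2)^{-1} = \tilde\mk(\ka/2)$ and $K(\ka/2)^{-1} = K((\iota+\pmb{1})/2)$.
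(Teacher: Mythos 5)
Your proposal is correct and follows essentially the same route as the paper: both directions rest on the Kauffman--Radford criteria with the same distinguished group-like elements $K(\ka)$ and $\tilde\mk(\pmb{1})$, the same candidates $l=K(\ka/2)$, $\be=\tilde\mk((\iota+\pmb{1})/2)$, and the same verification that conjugation reproduces $s^2$ (the paper checks it on arbitrary basis elements $x^\ga K(\al)$, while you reduce to the generators $x_j$, which is legitimate since all maps involved are algebra homomorphisms, and your exponent computation checks out). The only cosmetic difference is that the paper first gets existence from the odd-order corollary and then extracts the ribbon element, whereas you get both at once from Theorem 3(b).
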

\begin{proof}
$\Rightarrow)$ If $D(\bar{\mathscr{A}})$ is a ribbon Hopf algebra, it has a quasi-ribbon element. By \cite[Thm. 3 (a)]{KR1}, it is equivalent to that there exist $K(\al)\in G(\bar{\mathscr{A}}_q(n))$ and $\mk(\be)\in G(\bar{\mathscr{A}}_q(n)^*)$, such that $K(\al)^2=K(2\al)=K(\ka)$ and $\mk(\be)^2=\mk(2\be)=\tilde\mk(\pmb{1})=\mk(\tilde{\pmb{1}})$.
Therefore,
\[
2\al\equiv\ka~(\ms{mod}~\iota),\qquad
2\be\equiv\tilde{\pmb{1}}~(\ms{mod}~\iota).
\]
From the first congruence above, we know that $\ell$ is odd.

$\Leftarrow)$ For any finite dimensional Hopf algebra $H$, by the Nichols-Zoeller Theorem \cite[$\S$ 3]{Mon},
together with the linear independence of distinct elements in $G(H)$, $\ms{dim}~H$ is divisible by $|G(H)|$.
  Now $\ms{dim}~\bar{\mathscr{A}}_q(n)=\ell^{2n}$, thus $\ell^{2n}$ is divisible by $|G(\bar{\mathscr{A}}_q(n))|=|G(\bar{\mathscr{A}}_q(n)^*)|$ (In fact, by Props. \ref{m1}, \ref{m2}, $|G(\bar{\mathscr{A}}_q(n))|=|G(\bar{\mathscr{A}}_q(n)^*)|=\ell^n$).
By Eq. (\ref{eq:3}), $s^2$ is also of order $\ell$. Hence, if $\ell$ is odd, $G(\bar{\mathscr{A}}_q(n))$,
$G(\bar{\mathscr{A}}_q(n)^*)$ and $s^2$ are all of odd order. By \cite[Cor. 3]{KR1}, $D(\bar{\mathscr{A}})$ is a ribbon Hopf algebra.

Next, we compute the unique ribbon element of $D(\bar{\mathscr{A}})$ in the case when $\ell$ is odd. By \cite[Thm. 3]{KR1}, we only need to solve $K(\al_0)\in G(\bar{\mathscr{A}}_q(n)),~\mk(\be_0)\in G(\bar{\mathscr{A}}_q(n)^*)$, satisfying
\begin{gather*}
K(\al_0)^2=K(\ka),\qquad \mk(\be_0)^2=\tilde\mk(\pmb{1}),\\
s^2\lb x^\ga K(\al)\rb=K(\al_0)\lb\mk(\be_0)\rightharpoonup x^\ga K(\al)\leftharpoonup\mk(\be_0)^{-1}\rb K(\al_0)^{-1},\quad \forall\, \al, \ga\in\La_+^{\leq\ka}.
\end{gather*}
As $\ell$ is odd, by the first two equations above we have
\[K(\al_0)=K(\ka/2),\qquad\mk(\be_0)=\tilde\mk\lb(\iota+\pmb{1})/2\rb.\]
Finally, we need to check the third equation,
\[
\begin{split}
K(\ka/2)&\lb\tilde\mk\lb(\iota+\pmb{1})/2\rb\rightharpoonup x^\ga K(\al)\leftharpoonup\tilde\mk\lb(\iota+\pmb{1})/2\rb^{-1}\rb K(\ka/2)^{-1}\\
&=\tilde\mk\lb-(\iota+\pmb{1})/2\rb\lb K(\ga)\rb K(\ka/2)x^\ga K(\al)K(-\ka/2)\\
&=\te(\ga,\ka/2)q^{\lan\ga,\ka/2\ran}\te(\ka/2,\ga)q^{\lan\ga,\ka/2\ran}x^\ga K(\al)\\
&=q^{2\lan\ga,\ka/2\ran}x^\ga K(\al)=q^{-|\ga|}x^\ga K(\al)\\
&=s^2\lb x^\ga K(\al)\rb.
\end{split}
\]
Then by \cite[Thm. 1]{KR1}, we obtain the desired ribbon element $v$ of $D(\bar{\mathscr{A}})$.
\end{proof}

In the end of this section, we show that the $n$-rank Taft algebra $\bar{\mathscr{A}}_q(n)$ is 2-cocycle twist equivalent to the $n$-fold tensor product of Taft algebra, namely $\bar{\mathscr{A}}_q(1)^{\ot n}$ (the {\it rank $n$} Taft algebra), generalizing the special case when $n=2,\,q=-1$ in \cite{LH}, where one can see that Hopf algebras 2-cocycle twist equivalent to each other may be non-isomorphic, and with quite different representation theory.

First of all,
let us briefly recall the definition of 2-cocycle twist of a Hopf algebra.
Associated with a 2-cocycle $\si$ as a bilinear form defined on a bialgebra $H$,
which is invertible under the convolution product and satisfies
\begin{gather*}
\si(a,1)=\si(1,a)=\ve(a),\quad a\in H,\\
\si(a_{(1)},b_{(1)})\,\si(a_{(2)}b_{(2)},c)=\si(b_{(1)},c_{(1)})\,\si(a,b_{(2)}c_{(2)}),\quad a,\, b,\, c\in H,
\end{gather*}
one can construct a new bialgebra $(H^\si,\cdot_\si,\De,\ve)$ with
\[a\cdot_\si b=\si(a_{(1)},b_{(1)})\,a_{(2)}b_{(2)}\,\si^{-1}(a_{(3)},b_{(3)}),\quad a,\, b\in H.\]
Moreover, if $H$ is a Hopf algebra with the antipode $S$, then so is $H^\si$ with its antipode $S^\si$ given by
$S^\si(a)=\lan \mathscr{U},a_{(1)}\ran\, S(a_{(2)})\,\lan \mathscr{U}^{-1},a_{(3)}\ran$, for $a\in H^\si$,
where $\mathscr{U}=\si(\mi\ot S)\De\in H^*$ with the inverse $\mathscr{U}^{-1}=\si^{-1}(S\ot\mi)\De$.

\begin{theorem}\label{twist}
There exists a natural Hopf algebra isomorphism
\[\bar{\mathscr{A}}_q(n)\cong\lb\bar{\mathscr{A}}_q(1)^{\ot n}\rb^\si\]
with the corresponding Hopf 2-cocycle $\si$ on $\bar{\mathscr{A}}_q(1)^{\ot n}$ defined by:
\[\si(u,v)=
\begin{cases}
q^{-\be*\al},&u=K(\al)\text{ and }v=K(\be),\,\al,\be\in\La,\\
0,& u\text{ or }v\notin k\lan K(\al)\mid\al\in\La\ran.
\end{cases}\]
\begin{proof}
Here we  write the basis vectors in $\bar{\mathscr{A}}_q(1)^{\ot n}$ as
\[x^\ga K(\al):=x_1^{\ga_1}K(\al_1\ep_1)\ot\cdots\ot x_1^{\ga_n}K(\al_n\ep_1),\,\al,\ga\in\La_+^{\leq
\ka},\]
and use $\cdot$ to denote the multiplication of $\bar{\mathscr{A}}_q(1)^{\ot n}$. Then the desired Hopf algebra isomorphism identifies $k$-bases $\{x^\ga K(\al)\}_{\al,\ga\in\La_+^{\leq
\ka}}$ in both $\bar{\mathscr{A}}_q(n)$ and $\bar{\mathscr{A}}_q(1)^{\ot n}$.

Indeed, $\si$ is clearly a Hopf 2-cocycle. One only need to check that the twisted algebra $\lb\lb\bar{\mathscr{A}}_q(1)^{\ot n}\rb^\si,\cdot_\si\rb$ satisfies all the defining relations of $\bar{\mathscr{A}}_q(n)$ straightforwardly. For example, we have
\begin{align*}
K(\ep_i)\cdot_\si x_j&= \si(K(\ep_i),K(\ep_j))\, K(\ep_i)\cdot x_j\, \si^{-1}(K(\ep_i),1)
=q^{-\ep_j*\ep_i}\,K(\ep_i)\cdot x_j\\
&=\te(\ep_i,\ep_j)q^{-\ep_i*\ep_j}q^{\de_{ij}}\,x_j\cdot K(\ep_i)=\te(\ep_i,\ep_j)q^{\de_{ij}}\,x_j\cdot_\si K(\ep_i),\\
x_i\cdot_\si x_j &= \si(K(\ep_i),K(\ep_j))\,x_i\cdot x_j\,\si^{-1}(1,1)=q^{-\ep_j*\ep_i}\,x_i\cdot x_j\\
&= \te(\ep_i,\ep_j)q^{-\ep_i*\ep_j}\,x_j\cdot x_i=\te(\ep_i,\ep_j)\,x_j\cdot_\si x_i,
\end{align*}
for any $i,j=1,\dots,n$.
\end{proof}

\end{theorem}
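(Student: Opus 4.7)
The plan is to let $\phi:\bar{\mathscr{A}}_q(n)\rightarrow\lb\bar{\mathscr{A}}_q(1)^{\ot n}\rb^\si$ be the $k$-linear bijection identifying the bases $\{x^\ga K(\al)\}_{\al,\ga\in\La_+^{\leq\ka}}$ on both sides. Since a Hopf 2-cocycle twist preserves the coalgebra structure and the coproduct formula \eqref{eq:1} for $x^\ga K(\al)$ is reproduced tensor-factor by tensor-factor in $\bar{\mathscr{A}}_q(1)^{\ot n}$, the map $\phi$ is automatically a coalgebra isomorphism. It therefore suffices to check two things: first, that $\si$ is a Hopf 2-cocycle; second, that the twisted product $\cdot_\si$ on $\bar{\mathscr{A}}_q(1)^{\ot n}$ satisfies the defining relations $\ms{(r1)}$--$\ms{(r4)}$ of $\bar{\mathscr{A}}_q(n)$.

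For the cocycle property, I would observe that $\si$ vanishes on any argument lying outside the group subalgebra $k[G]$, where $G=G\lb\bar{\mathscr{A}}_q(1)^{\ot n}\rb\cong\mathbb{Z}_\ell^{\times n}$, while on $k[G]\times k[G]$ it coincides with the bicharacter $\si(K(\al),K(\be))=q^{-\be*\al}$, bilinear in $(\al,\be)$. Hence the cocycle identity
\[\si(a_{(1)},b_{(1)})\,\si(a_{(2)}b_{(2)},c)=\si(b_{(1)},c_{(1)})\,\si(a,b_{(2)}c_{(2)})\]
reduces to bilinearity of $*$ on group-like triples, and becomes $0=0$ otherwise, because any basis vector $x^\ga K(\al)$ with $\ga\neq 0$ has every summand of its iterated coproduct carrying at least one tensor factor outside $k[G]$. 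The convolution inverse is then $\si^{-1}(K(\al),K(\be))=q^{\be*\al}$ extended by zero, by the same dichotomy.

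For the algebra matching, I would verify $\ms{(r1)}$--$\ms{(r4)}$ in turn. Relations $\ms{(r1)}$, $\ms{(r2)}$ involve only group-likes, for which direct computation yields $K(\al)\cdot_\si K(\be)=K(\al+\be)$, preserving the group structure and its exponent $\ell$. For $\ms{(r3)}$, $\ms{(r4)}$, and the nilpotency $x_i^\ell=0$, the decisive mechanism is that when one expands $\si(a_{(1)},b_{(1)})\,a_{(2)}b_{(2)}\,\si^{-1}(a_{(3)},b_{(3)})$ with $a,b\in\{K(\ep_i),x_j\}$ (or iteratively for powers of $x_i$), the vanishing of $\si$ off $k[G]$ forces the outer arguments to be group-like, so exactly one term per iterated coproduct survives. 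This produces the scalar $q^{-\ep_j*\ep_i}$, which converts the tensor-product commutativity of $x_i,x_j$ in distinct slots and the single-factor Taft commutation $K(\ep_i)\cdot x_j=q^{\de_{ij}}x_j\cdot K(\ep_i)$ into the desired $\te(\ep_i,\ep_j)$-skew commutation, exactly as in the displayed computations. The main obstacle I anticipate is the bookkeeping of iterated coproducts and confirming the vanishing of all but the expected surviving summand, particularly for the nilpotency check $x_i^{\cdot_\si\,\ell}=0$; this I would handle inductively by showing $x_i^{\cdot_\si\,m}$ differs from $x_i^{\,m}$ by a nonzero scalar, since at each step the outer entries of $\si,\si^{-1}$ are forced into $k[G]$. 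Once these relations are established, $\phi$ is a well-defined algebra map, and the equality $\ms{dim}\,\bar{\mathscr{A}}_q(n)=\ell^{2n}=\ms{dim}\,\bar{\mathscr{A}}_q(1)^{\ot n}$ upgrades $\phi$ to an isomorphism.
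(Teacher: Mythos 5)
Your overall strategy coincides with the paper's: verify that $\si$ is a Hopf 2-cocycle (using that it vanishes off the group algebra and restricts to a bicharacter on $G\lb\bar{\mathscr{A}}_q(1)^{\ot n}\rb\cong\mathbb{Z}_\ell^{\times n}$), check that the twisted product reproduces the relations $\ms{(r1)}$--$\ms{(r4)}$ together with $x_i^\ell=0$, and conclude by a dimension count. Your elaboration of the cocycle identity and of the nilpotency $x_i^{\cdot_\si\,\ell}=0$ supplies detail the paper omits, and that part is sound: in every iterated coproduct of a basis vector $x^\ga K(\al)$ with $\ga\neq0$, each summand has some tensor leg outside $k[G]$, so all but the expected term is killed by $\si$ or $\si^{-1}$.

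There is, however, one step that fails as stated: the claim that the linear bijection identifying the bases $\{x^\ga K(\al)\}$ is ``automatically a coalgebra isomorphism'' because \eqref{eq:1} is reproduced tensor-factor by tensor-factor. It is not: \eqref{eq:1} carries the coefficient $q^{-(\ga-\xi)*\xi}$, which the componentwise coproduct of $\bar{\mathscr{A}}_q(1)^{\ot n}$ does not produce. Concretely, for $n=2$ the coproduct $\De(x_1x_2)$ in $\bar{\mathscr{A}}_q(2)$ contains the term $q^{-1}\,x_2K(\ep_1)\ot x_1$, whereas $\De(x_1\ot x_1)$ in $\bar{\mathscr{A}}_q(1)^{\ot 2}$ contains $(K(\ep_1)\ot x_1)\ot(x_1\ot 1)$ with coefficient $1$. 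For the same reason the basis-identifying map is not the algebra map you obtain from the relations check: e.g.\ $x_1\cdot_\si x_2=q^{-\ep_2*\ep_1}\,x_1\ot x_1=q^{-1}\,x_1\ot x_1$, so the generator-determined morphism sends $x^\ga K(\al)$ to a $q$-power multiple of the elementary tensor rather than to the elementary tensor itself; your proposal is internally inconsistent about which of the two maps $\phi$ is. The repair is to define the isomorphism on generators, $K(\ep_i)\mapsto 1^{\ot(i-1)}\ot K(\ep_1)\ot1^{\ot(n-i)}$ and $x_i\mapsto 1^{\ot(i-1)}\ot x_1\ot1^{\ot(n-i)}$; these are group-like, respectively skew-primitive with matching coproducts, so once the relations are verified the resulting algebra map is a bialgebra (hence Hopf) map, bijective by your dimension count. (The paper's phrase ``identifies $k$-bases'' is loose in exactly the same way, so apart from this point your argument matches theirs.)
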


\section{The knot invariants associated with the twist oriented quantum algebras}

In this section,  we first recall the oriented quantum algebras (\cite{KR, Rad, Rad2}) and the resulting knot invariants, then apply to our case $D(\bar{\mathscr{A}})$.

\noindent
\begin{definition}
\label{oqa}
An {\itshape oriented quantum algebra} over $k$ is a tuple
$(A, \rho, D, U)$,
where $A$ is an algebra over $k$, $\rho\in A\ot A$ is an invertible element,
and $D,~U$ are two commuting algebra automorphisms of $A$, satisfying
\[\begin{array}{ll}
\ms{\upshape (qa.1)}~&(\ms{\upshape Id}_A\ot
U)(\rho)~\mbox{and}~(D\ot\ms{\upshape Id}_A
)(\rho^{-1})~\mbox{are invertible in}~A\ot A^\bs{op},\\
\ms{\upshape (qa.2)}~&\rho=(D\ot D)(\rho)=(U\ot U)(\rho), \\
\ms{\upshape (qa.3)}~&\rho_{12}\rho_{13}\rho_{23}=\rho_{23}\rho_{13}\rho_{12},
\end{array}
\]
where $\rho=\sum\limits_ia_i\ot b_i$, and
$\rho_{12}=\sum\limits_ia_i\ot b_i\ot 1$,
$\rho_{13}=\sum\limits_ia_i\ot 1\ot
b_i$, $\rho_{23}=\sum\limits_i1\ot a_i\ot b_i$.

A {\itshape twist oriented quantum algebra} (in brief, TOQA) over $k$ is a tuple $(A, \rho, D, U, G)$,
where $(A, \rho, D, U)$ is an oriented quantum algebra and $G\in
A$ is invertible, such that $(D\circ U)(x)=GxG^{-1}$ for all $x\in A$.
\end{definition}

Now suppose $(H, R)$ is a quasitriangular Hopf algebra over $k$, and let $R=\sum_i a_i\ot b_i$.
The Drinfel'd element $u=\sum_i s(b_i)a_i$ is invertible, with $u^{-1}=\sum_i
b_is^2(a_i)$, and $s^2(x)=uxu^{-1}$ for all $x\in H$.

\begin{remark}
From \cite[Prop. 4.1]{RW} \& \cite{KR}, we know that if $H$ is a quasitriangular Hopf algebra over $k$,
$(H, R, \ms{\upshape Id}_H, s^{-2}, u^{-1})$
is a twist oriented quantum algebra. Moreover, if $H$ has a ribbon element $v$, instead we can consider
a twist oriented quantum algebra $(H, R, \ms{\upshape Id}_H, s^{-2}$, $u^{-1}v)$.
\end{remark}
Hence, we have a twist oriented quantum algebra
$\bigl(D(\bar{\mathscr{A}}), \mathcal
{R}, \mbox{Id}_{D(\bar{\mathscr{A}})}, S^2\ot s^{-2}, u^{-1}\bigr)$
 over $k$, or say
$\bigl( D(\bar{\mathscr{A}}), \, \mathcal
{R}, \,\mbox{Id}_{D(\bar{\mathscr{A}})},\, S^2\ot s^{-2},\, u^{-1}v\bigr)$,
where $\mathcal {R}=\sum_{i=1}^r(\ve\ot h_i)\ot(h^i\ot
1)$ and
$$
u=\sum_{i=1}^rS^{-1}(h^i)\ot h_i=\sum_{i=1}^rh^i\ot
s^{-1}(h_i),\qquad
u^{-1}=\sum_{i=1}^rS^2(h^i)\ot h_i=\sum_{i=1}^rh^i\ot s^2(h_i).
$$
Here $\{h_1,\ldots,h_r\}$ is a $k$-basis of $\bar{\mathscr{A}}_q(n)$,
 while $\{h^1,\ldots,h^r\}$ is the dual basis of $\bar{\mathscr{A}}_q(n)^*$ with
$r=\ms{dim}\bar{\mathscr{A}}_q(n)=\ell^{2n}$.

In particular, by Proposition \ref{dual}, we can choose the bases $\bigl\{x^\ga K(\al)\mid\al,\ga\in\La_+^{\leq\ka}\bigr\}$ and
$\bigl\{\mx^\ga\mh(\al)\mid \al,\ga\in\La_+^{\leq\ka}\bigr\}$ dual to each other to give the formula of the universal $R$-matrix
\begin{equation}
\mathcal {R}=\sum\limits_{\al,\ga\in\La_+^{\leq\ka}}\lb\ve\ot x^\ga K(\al)\rb\ot\lb\mx^\ga\mh(\al)\ot1\rb.
\end{equation}

\subsection{Regular isotopy invariants from TOQA's}
Next, we simply introduce the construction of regular isotopy invariants arising from TOQA's in \cite{KR}.
For the basic knowledge of knot theory, one can refer to \cite{Kau1}. For the material about regular isotopy invariants, one can consult \cite{Kau}.

Given a TOQA $(A, \rho, D, U, G)$,
the first step is to construct a regular isotopy invariant of oriented 1-1 tangle diagrams,
\[\mb{Inv}_A:\mb{Tang}\rightarrow A,\]
where \nb{Tang} is the collection of all oriented 1-1 tangle diagrams.
Figure 1 is a simple example of oriented 1-1 tangle.
\[
\xy 0;/r.1pc/: (-10,10)*{}="a";(0,0)*{}="b";(0,25)*{}="c";
(-10,-10)*{}="d"; (0,-20)*{}="e";
"a";"b"**\crv{(-15,2)&(-3,3)};\POS?(.6)*{\hole}="x";
"c";"x"**\crv{(10,10)};;\POS?(.5)*{\hole}="y";;?(.25)*\dir{>};
"a";"y"**\crv{(-5,20)};"x";"d"**\crv{(-15,0)};
"d";"y"**\crv{(-5,-17)&(17,7)};\POS?(.5)*{\hole}="z";
"b";"z"**\crv{(2,-1)};"z";"e"**\crv{(6,-15)};?(.5)*\dir{>};
\endxy
\]

\centerline{\figurename\quad1}

In order to obtain \nb{Inv}, Kauffman and Radford (\cite{KR}) first construct a formal product $\mb{W}_A(\mb{T}),\mb{T}\in\mb{Tang}$,
then they turn to the specialization $\mb{w}_A(\mb{T})\in A$,
and take $\mb{Inv}_A(\mb{T})=\mb{w}_A(\mb{T})$.

The formal product $\mb{W}_A(\mb{T})$ is defined by an elaborate algorithm, which involves sliding labeled beads.
In detail, for $\mb{T}\in\mb{Tang}$, if it has no crossings,
let $\mb{W}_A(\mb{T})=1$. Otherwise, if $\mb{T}$ has $m\geq1$ crossings, traverse it in the direction of orientation,
and label the crossing lines $1,\ldots,2m$, 
successively.
Also give line $i$ the decoration $x_i$, $1\leq i\leq 2m$.
Here is an example of oriented 1-1 tangle completing decoration.
\[
\xy 0;/r.1pc/: (-10,30)*{}="a";(10,30)*{}="b";(-10,10)*{}="c";
(10,10)*{}="d"; "b";"c"**\crv{};?(1)*\dir{<};\POS?(.5)*{\hole}="x";
"d";"x"**\crv{};"a";"x"**\crv{};?(1)*\dir{<};
(-10,-10)*{}="a";(10,-10)*{}="b";
"a";"d"**\crv{};\POS?(.5)*{\hole}="x";?(1)*\dir{>};
"b";"x"**\crv{};"c";"x"**\crv{};?(1)*\dir{<}; (-10,-30)*{}="c";
(10,-30)*{}="d"; "b";"c"**\crv{};?(1)*\dir{<};\POS?(.5)*{\hole}="x";
"d";"x"**\crv{};"a";"x"**\crv{};?(1)*\dir{<};
(-10,30)*{}="a";(10,30)*{}="b";
"a";(-10,40)**\dir{-};?(1)*\dir{>};"c";(-10,-40)**\dir{-};?(0)*\dir{<};
"b";(30,30)**\crv{(10,40)&(30,40)};
"d";(30,-30)**\crv{(10,-40)&(30,-40)};
(30,-30);(30,30)**\dir{-};?(.5)*\dir{<};
(-6,14)*{\bullet}+(-5,5)*{x_3};(6,14)*{\bullet}+(5,5)*{x_6};
(-6,-6)*{\bullet}+(-5,5)*{x_5};(6,-6)*{\bullet}+(5,5)*{x_2};
(-6,-26)*{\bullet}+(-5,5)*{x_1};(6,-26)*{\bullet}+(5,5)*{x_4};
\endxy
\]

\centerline{\figurename\quad2}

Apart from the crossings, there are the following blocks left for an oriented tangle diagram, i.e., four oriented local extrema

\[\xy 0;/r.12pc/:
(-55,0);(-35,0);**\crv{(-45,15)};?(0)*\dir{<};(-25,5);(-5,5);**\crv{(-15,-10)};?(0)*\dir{<};
(5,0);(25,0);**\crv{(15,15)};?(1)*\dir{>};(35,5);(55,5);**\crv{(45,-10)};?(1)*\dir{>};
(-45,-10)*{\ms{(u}_-)};(-15,-10)*{(\ms{u}_+)};(15,-10)*{(\ms{d}_+)};(45,-10)*{(\ms{d}_-)};
\endxy
\]

For $1\leq i\leq 2m$,
let $u_{\tiny\ms{d}}(i)$ be the number of local extrema of type $(\ms{d}_+)$
minus the number of type $(\ms{d}_-)$ encountered on the portion of the traversal from line $i$ to the end.
Similarly, $u_{\tiny\ms{u}}(i)$ is defined.
Now set
\[\mb{W}_A(\mb{T})=\lb D^{u_{\tiny\ms{d}}(1)}\circ U^{u_{\tiny\ms{u}}(1)}\rb(x_1)\cdots
\lb D^{u_{\tiny\ms{d}}(2m)}\circ U^{u_{\tiny\ms{u}}(2m)}\rb(x_{2m}).\]

When turning to the specialization of $\mb{W}_A(\mb{T})$ in $A$, the decoration of the crossings is modified as
\begin{equation}\label{cross}
\xy 0;/r.12pc/:
(-10,-10);(-30,10)**\crv{};\POS?(.5)*{\hole}="x";?(1)*\dir{>};
(-30,-10);"x"**\crv{};"x";(-10,10)**\crv{};?(0)*\dir{>};
(10,-10);(30,10)**\crv{};\POS?(.5)*{\hole}="x";?(1)*\dir{>};
(30,-10);"x"**\crv{};"x";(10,10)**\crv{};?(0)*\dir{>};
(-25,5)*{\bullet}+(-8,-1)*{E};(-15,5)*{\bullet}+(8,-1)*{E'};
(15,-5)*{\bullet}+(-8,1)*{e};(25,-5)*{\bullet}+(8,1)*{e'};
(-20,-15)*{L_-};(20,-15)*{L_+};
\endxy
\end{equation}
Here $\rho^{-1}=E\ot E',~\rho=e\ot e'$.
As we demand $\mb{Inv}_A$ to be a regular isotopy invariant,
the decoration of other kind of oriented crossings should be computed from two standard cases above,
by adding some local extrema. For instance, we have
\[\xy 0;/r.12pc/:
(-10,-10);(-30,10)**\crv{};?(1)*\dir{<};(-20,0)*{\hole}="x";
(-30,-10);"x"**\crv{};"x";(-10,10)**\crv{};?(0)*\dir{>};
(10,-10);(30,10)**\crv{};\POS?(.5)*{\hole}="x";?(1)*\dir{>};
(30,-10);"x"**\crv{};?(1)*\dir{<};"x";(10,10)**\crv{};
(-25,5)*{\bullet}+(-9,0)*{e};(-25,-5)*{\bullet}+(-12,0)*{U(e')};
(25,5)*{\bullet}+(12,0)*{U(E)};(25,-5)*{\bullet}+(9,0)*{E'};
\endxy
\]

\medskip
Having changed $W_A(\mb{T})$ to $\mb{Inv}_A(\mb{T})$, one can be transferred to the case of oriented link diagrams.

Any oriented knot diagram can be viewed as an oriented 1-1 tangle diagram with ends joined.
 And an oriented link diagram consists of one or more oriented knot diagrams, the components of the link diagram.
For example, if we join the ends of the tangle in Fig. 1, it becomes a left-handed Trefoil knot.

By \cite[Theorem 3]{KR}, in order to convert an invariant of oriented tangles to the one of links, besides the twist element $G$, it still needs a $k$-valued trace-like function \ms{tr} on $A^*$, satisfying $\ms{tr}\circ D=\ms{tr}=\ms{tr}\circ U$. Here $f\in
A^*$ is a trace-like function if and only if $f(ab)=f(ba), \forall\, a, b\in A$.
 To each component we have to choose a base point for computation.  Using $G$ to modify the trace-like function, one can show that the corresponding invariant is independent of the choice of base points.

Now let \nb{Link} be the collection of all oriented link diagrams.
Assume that $\mb{L}\in\mb{Link}$ has components $\mb{L}_1,\ldots,\mb{L}_r$, then one can define the formal product $\mb{W}(\mb{L}_\ell)$ as follow.

When $\mb{L}_\ell$ has no crossings, let $\mb{W}(\mb{L}_\ell)=1$. Otherwise,
one can choose a base point $P_\ell$ on a vertical line of $\mb{L}_\ell$ for topological reasons.
The bead starts from $P_\ell$,  slides in the direction of orientation, and stops when back to $P_\ell$.
 The crossing lines encountered on the traversal are successively labeled as $(\ell:1),\ldots,(\ell:m_\ell)$.
For $1\leq i \leq m_\ell$, we define $u_{\tiny\ms{u}}(\ell:i)$ and $u_{\tiny\ms{d}}(\ell:i)$, similarly to the case of tangles. The only change is the involved portion of the traversal, now just from crossing line $(\ell:i)$ to base point $P_\ell$.
Let $x_{(\ell:i)}$ be the decoration on crossing line $(\ell:i)$, and set
\[\mb{W}(\mb{L}_\ell)=\lb D^{u_{\tiny\ms{d}}(\ell:1)}\circ
U^{u_{\tiny\ms{u}}(\ell:1)}\rb(x_{(\ell:1)})\cdots
 \lb D^{u_{\tiny\ms{d}}(\ell:m_\ell)}\circ
 U^{u_{\tiny\ms{u}}(\ell:m_\ell)}\rb(x_{(\ell:m_\ell)}).\]
Similarly, using $\rho$ and $\rho^{-1}$ to get the specialization $\mb{w}(\mb{L}_\ell)$ in $A$,  one then defines the invariant of oriented links as
\begin{equation}\label{eq1}
\mb{Inv}_{A,\scriptsize\ms{tr}}(\mb{L})=\ms{tr}\lb
G^{\scriptsize\ms{Wd}(\mb{L}_1)}\mb{w}(\mb{L}_1)\rb \cdots\ms{tr}\lb
G^{\scriptsize\ms{Wd}(\mb{L}_r)}\mb{w}(\mb{L}_r)\rb,
\end{equation}
where $\ms{Wd}(\mb{L}_i)$ is the {\it Whitney degree} of $\mb{L}_i$,
 i.e., the sum of the rotation numbers of its local extrema.
 The function \ms{Wd} is an elementary regular invariant of oriented link diagrams.
 The rotation number of the local extrema of type $(\ms{u}_-)$ and $(\ms{d}_-)$ are $1/2$,
while those of the other two types $(\ms{u}_+)$ and $(\ms{d}_+)$ are $-1/2$.

\medskip

In order to construct the invariant \nb{Inv} of oriented links,
one needs a trace-like function $\ms{tr}$ on $D(H)$ for any Hopf algebra $H$ over $k$.
Note that a trace-like function on $D(H)$ is just a cocommutative element in $D(H)^*$, where $H$ is finite dimensional.

In particular, since $D(H)^*$ is isomorphic to $(H^{*{\bs{cop}}}\ot
H)^*=H^*\ot H^{\bs{op}}=\mbox{Hom}(H,H^{\bs{op}})$ as an algebra, there exists an algebra isomorphism
\[\ms{f}:\mbox{Hom}(H,H^{\bs{op}})\rightarrow D(H)^*,
\quad \ms{f}(T)(p\ot a)=p(T(a)), \quad \forall\; T\in \mbox{End}(H), \ p\in
H^*, \ a\in H,\]
where $\mbox{Hom}(H,H^{\bs{op}})$ has the canonical convolution algebra structure.

By \cite[Thm. 2.2]{RW}, $\ms{f}(T)$ is a cocommutative element in $D(H)^*$, if and only if
\begin{gather*}
l(a_{(2)})\circ T\circ r(a_{(1)})=r(a_{(1)})\circ T\circ l(a_{(2)}),\quad\textit{and}\\
l(p_{(2)})\circ T^*\circ r(p_{(1)})=r(p_{(1)})\circ T^*\circ l(p_{(2)}),
\end{gather*}
hold for $\forall a\in H, \; p\in H^*$. Here $l$ (resp. $r$) is the left (resp., right) multiplication transformation of $H$ (or $H^*$).

On the other hand, as a $k$-vector space, $D(H)=H^*\ot H$ can be identified with $\mbox{End}(H)$.
Let $\ms{Tr}$ be the usual trace on $\mbox{End}(H)$, then
\[\ms{f}(T)(p\ot a)=p(T(a))=
\ms{Tr}(p\ot T(a))=\ms{Tr}(T\circ (p\ot a)).\]
Hence,
\begin{equation} \label{eq2}
\ms{f}(T)\lb T'\rb=\ms{Tr}\lb T\circ T'\rb,\qquad \forall\; T'\in \mbox{End}(H)=D(H).
\end{equation}

From the discussion above, the usual candidates for a trace-like function on $D(H)$ are
\[\ms{f}(\mbox{Id}_H)=\ms{Tr}, \qquad \ms{f}(s^{-1})=\ms{Tr}\circ s^{-1}.\]

\subsection{Ambient knot invariants from ribbon Hopf algebras}
 In general, to directly compute $\mb{Inv}_{H,\scriptsize\ms{tr}}$ even for some usual link $\mb{L}$ when $H=\bar{\mathscr{A}}_q(n)$ is rather difficult.
 In order to jump out of the dilemma, we turn to representations of $H$, since quantum invariants often originate from representations of quantum groups.
In fact, around early 1990, Reshetinkhin, Turaev, etc. had given the prototype of quantum invariants, i.e., operator invariants, see \cite{Tu}, \cite{Tu1}, \cite{RT1}. That is, given any semisimple Lie algebra $\mathfrak{g}$, and a representation $(\pi,V)$ of its quantized universal enveloping algebra $U_q(\mathfrak{g})$, one can construct the corresponding operator invariant, called the quantum $(\mathfrak{g},\pi)$ invariant in \cite{Oht}. For example, the regular invariant, Kauffman bracket, can be derived from the quantum $(\mathfrak{sl}_2,V_2)$ invariant, where  $V_2$ is the two dimensional fundamental representation of $U_q(\mathfrak{sl}_2)$.

Given a ribbon Hopf algebra $(H,R,v)$ and its finite-dimensional irreducible representation $(\pi,M)$, we modify the invariant $\mb{Inv}_{H,\scriptsize\ms{tr}}$ to be
\begin{equation}
\label{eq:6}
\mb{Inv}_{H,M}(\mb{L})=\chi_M(G^{\scriptsize\ms{Wd}(\mb{L}_1)}\mb{w}(\mb{L}_1))\cdots\chi_M(G^{\scriptsize\ms{Wd}(\mb{L}_r)}\mb{w}(\mb{L}_r)),
\end{equation}
where $G=u^{-1}v$ and $\chi_M:=\ms{Tr}\circ\pi$ is just the character of the representation $(\pi,M)$, of course a trace-like function on $H$.

From \cite{Rad}, we know that for a given representation $\pi:H\rightarrow \ms{End}(M)$, one can induce an oriented quantum algebra structure
\[\lb\ms{End}(M),\, \pi\ot\pi(R),\, \mi_M,\, \mbox{\upshape Int}_{\pi(u^{-1}v)}\rb\]
on $(\ms{End}(M), \pi\ot\pi(R))$, where $\mbox{\upshape Int}_{\pi(u^{-1}v)}$ is the inner automorphism of $\ms{End}(M)$ defined by conjugating $\pi(u^{-1}v)$. For the TOQA
\[\lb\ms{End}(M),\, \pi\ot\pi(R),\, \mi_M,\, \mbox{\upshape Int}_{\pi(u^{-1}v)},\, \pi(u^{-1}v)\rb,\]
we see that the invariant $\mb{Inv}_{H,M}$ is just $\mb{Inv}_{\scriptsize\ms{End}(M),\ms{Tr}}$.

Meanwhile, this modification has its great benefit since one can normalize the regular invariant $\mb{Inv}_{H,M}$ to an ambient one. First, in order to distinguish knot types, we need the following result (see \cite[Thm. 3.8.4]{Cro})

{\it Two diagrams of an oriented link are regular isotopic to each other if and only if they have the same writhe $\ms{\upshape Wr}$ and Whitney degree $\ms{\upshape Wd}$}.

The {\it writhe} $\ms{Wr}$ of an oriented link diagram is defined as the number of its positive crossings minus the number of its negative crossings. That is, the crossing $L_-$ (resp. $L_+$) in (\ref{cross}) contributes $-1$ (resp. $1$) to the writhe of the link.

Note that the parity of $\ms{Wr}+\ms{Wd}$ is an ambient isotopy invariant. When two oriented knot diagrams portray the same knot type, i.e., they are {\it ambient isotopic}, one can add a certain number of {\itshape self-writhes} to one of them, such that they have the same writhe and Whitney degree. 

Here adding self-writhes to an oriented knot diagram means adding a certain number of the following types of tangles to it.
\[
\xy 0;/r.12pc/:
(-10,-10);(-30,10)**\crv{};\POS?(.5)*{\hole}="x";
(-30,-10);"x"**\crv{};(-10,10);"x"**\crv{};
(-30,-10);(-40,-10)**\crv{(-30,-20)&(-40,-20)};
(-30,10);(-40,10)**\crv{(-30,20)&(-40,20)};
(-40,-10);(-40,10)**\dir{-};?(0.5)*\dir{<};
(-10,-20);(-10,-10)**\dir{-};?(1)*\dir{>};
(-10,10);(-10,20)**\dir{-};?(1)*\dir{>};
(-50,10);(-70,-10)**\crv{};\POS?(.5)*{\hole}="x";
(-50,-10);"x"**\crv{};(-70,10);"x"**\crv{};
(-70,10);(-80,10)**\crv{(-70,20)&(-80,20)};
(-70,-10);(-80,-10)**\crv{(-70,-20)&(-80,-20)};
(-80,10);(-80,-10)**\dir{-};?(0.5)*\dir{>};
(-50,-20);(-50,-10)**\dir{-};?(1)*\dir{>};
(-50,10);(-50,20)**\dir{-};?(1)*\dir{>};
(0,-10);(20,10)**\crv{};\POS?(.5)*{\hole}="x";
(20,-10);"x"**\crv{};(0,10);"x"**\crv{};
(20,-10);(30,-10)**\crv{(20,-20)&(30,-20)};
(20,10);(30,10)**\crv{(20,20)&(30,20)};
(30,-10);(30,10)**\dir{-};?(0.5)*\dir{<};
(0,-20);(0,-10)**\dir{-};?(1)*\dir{>};
(0,10);(0,20)**\dir{-};?(1)*\dir{>};
(40,10);(60,-10)**\crv{};\POS?(.5)*{\hole}="x";
(40,-10);"x"**\crv{};(60,10);"x"**\crv{};
(60,10);(70,10)**\crv{(60,20)&(70,20)};
(60,-10);(70,-10)**\crv{(60,-20)&(70,-20)};
(70,10);(70,-10)**\dir{-};?(0.5)*\dir{>};
(40,-20);(40,-10)**\dir{-};?(1)*\dir{>};
(40,10);(40,20)**\dir{-};?(1)*\dir{>};
(-65,-30)*{\mb{T}_{l,+}};(-25,-30)*{\mb{T}_{l,-}};
(15,-30)*{\mb{T}_{r,+}};(55,-30)*{\mb{T}_{r,-}};
\endxy
\]
The following important result is needed for us (see \cite[Thm. 4.2 (a)]{RW}).

{\it Denote by {\upshape$\mb{Curl}$} the collection of all 1-1 tangle diagrams constituted by the four types of oriented 1-1 tangles above. Then for any {\upshape $\mb{T}\in\mb{Curl}$}, the specialization of its formal product to $H$ is
{\upshape\begin{equation}\label{T}
\mb{w}_{H}(\mb{T})=\ms{h}^{\scriptsize-(\ms{Wr}(\mb{T})+\ms{Wd}(\mb{T}))/2}
u^{\scriptsize-\ms{Wr}(\mb{T})},
\end{equation}}
where {\upshape $\ms{h}=u^{-1}s(u)$}. Note that for any {\upshape$\mb{T}\in\mb{Curl}$, $\ms{Wr}(\mb{T})+\ms{Wd}(\mb{T})$} is always even.}

Finally, we are in the position to do the {\it writhe-normalization}.
\begin{proposition}\label{wnor}
Given a ribbon Hopf algebra $(H, R, v)$ and its finite dimensional irreducible representation $(\pi,M)$, the TOQA structure
$(H,\, R,\, \mi_{H},\, s^{-2},\, u^{-1}v)$
provides an ambient isotopy invariant of oriented knots (in fact even for oriented links)
\begin{equation}\label{inv1}
\overline{\mb{\upshape Inv}}_{H,M}=c^{\scriptsize\ms{\upshape Wr}}\mb{\upshape Inv}_{H,M}
\end{equation}
with scalar $c\in k$ given by the action of the ribbon element $v$ on $M$.
\end{proposition}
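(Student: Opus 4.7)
The plan is to reduce the claim to invariance under Reidemeister move I, since $\mb{Inv}_{H,M}$ is already a regular isotopy invariant of oriented link diagrams by the construction recalled just before the statement. Thus R2 and R3 are automatic, and only insertion or deletion of curls needs to be addressed.

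My first step is an application of Schur's lemma: because the ribbon element $v$ is by definition central in $H$ and $(\pi,M)$ is a finite dimensional irreducible representation over the algebraically closed field $k$, we have $\pi(v)=c\cdot\mi_M$ for a unique scalar $c\in k$, which is the scalar appearing in the statement.

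Next, I would quantify the R1-failure of $\mb{Inv}_{H,M}$ using the formula~(\ref{T}). Inserting a curl $\mb{T}\in\mb{Curl}$ with $\ms{Wr}(\mb{T})=\epsilon\in\{\pm1\}$ and $\ms{Wd}(\mb{T})=\delta\in\{\pm1\}$ into a component $\mb{L}_i$ shifts $\ms{Wd}(\mb{L}_i)$ by $\delta$ and multiplies $\mb{w}(\mb{L}_i)$ by $\ms{h}^{-(\epsilon+\delta)/2}u^{-\epsilon}$. Hence the $i$-th factor in~(\ref{eq:6}) gains the extra piece
\[
G^{\delta}\,\ms{h}^{-(\epsilon+\delta)/2}\,u^{-\epsilon}.
\]
Using $G=u^{-1}v$ together with the ribbon identity $v^2=us(u)$ (so that $\ms{h}=u^{-1}s(u)=u^{-2}v^2$, since $v$ is central and therefore commutes with $u$), the $u$-exponents telescope and this extra piece simplifies to the central scalar $v^{-\epsilon}$. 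By centrality, $v^{-\epsilon}$ pulls out of $\chi_M$ as $c^{-\epsilon}$, so the $i$-th factor of $\mb{Inv}_{H,M}$ acquires the scalar $c^{-\epsilon}$, while $\ms{Wr}$ changes by $+\epsilon$; the prefactor $c^{\ms{Wr}}$ in~(\ref{inv1}) therefore exactly compensates, and the statement for knots follows. For links, the same argument applies component by component.

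The main obstacle is organizing the bookkeeping uniformly across the four curl types $\mb{T}_{l,\pm}$, $\mb{T}_{r,\pm}$ (each realizing one admissible pair $(\epsilon,\delta)$) and ensuring that the local specialization in~(\ref{T}) remains valid when the curl is inserted inside an arbitrary component (so that the effect is purely multiplicative as claimed). Once that is granted, the scalar identity $G^{\delta}\,\ms{h}^{-(\epsilon+\delta)/2}\,u^{-\epsilon}=v^{-\epsilon}$ is a one-line algebraic check that handles every case at once, and the rest is simply combining regular isotopy invariance with the corrected R1 behavior.
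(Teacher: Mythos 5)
Your proposal is correct and follows essentially the same route as the paper: both rest on Schur's lemma for the central ribbon element, the curl formula~(\ref{T}), and the identity $G^{\ms{Wd}(\mb{T})}\mb{w}_H(\mb{T})=v^{-\ms{Wr}(\mb{T})}$ obtained from $\ms{h}=u^{-2}v^2$. The only cosmetic difference is that you track one Reidemeister-I move at a time, whereas the paper corrects the writhe and Whitney degree in a single step using the classification of regular isotopy classes by $(\ms{Wr},\ms{Wd})$; the multiplicativity issue you flag is handled there by the same observation that $G$ commutes with $\mb{w}_H(\mb{T})$.
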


\begin{proof}
For any two knot diagrams $\mb{\upshape K}_1$ and $\mb{\upshape K}_2$, which portray the same knot type, by adding a certain tangle diagram $\mb{\upshape T}\in\mb{\upshape Curl}$ to $\mb{\upshape K}_2$, one gets another knot diagram $\mb{\upshape K}_2'$ regular isotopic to $\mb{\upshape K}_1$, where $\ms{\upshape Wr}(\mb{\upshape T})=\ms{\upshape Wr}(\mb{\upshape K}_1)-\ms{\upshape Wr}(\mb{\upshape K}_2),
~\ms{\upshape Wd}(\mb{\upshape T})=\ms{\upshape Wd}(\mb{\upshape K}_1)-\ms{\upshape Wd}(\mb{\upshape K}_2)$.
Now
by the construction mentioned in Section 4, we have
\[
\begin{split}
G^{\scriptsize\ms{Wd}(\mb{\upshape K}_1)}\mb{w}_{H}(\mb{\upshape K}_1)
=G^{\scriptsize\ms{Wd}(\mb{\upshape K}_2')}\mb{w}_{H}(\mb{\upshape K}_2')
 &=G^{\scriptsize\ms{Wd}(\mb{T})}\mb{w}_{H}(\mb{T}) G^{\scriptsize\ms{Wd}(\mb{\upshape K}_2)}\mb{w}_{H}(\mb{\upshape K}_2)\\
&=\lb\ms{h}u^2\rb^{-\scriptsize(\ms{Wr}(\mb{T})+\ms{Wd}(\mb{T}))/2}v^{\scriptsize\ms{Wd}(\mb{T})}G^{\scriptsize\ms{Wd}(\mb{\upshape K}_2)}\mb{w}_{H}(\mb{\upshape K}_2)\\
&=v^{-\scriptsize\ms{Wr}(\mb{T})}G^{\scriptsize\ms{Wd}(\mb{\upshape K}_2)}\mb{w}_{H}(\mb{\upshape K}_2)\\
&=v^{\scriptsize\ms{\upshape Wr}(\mb{\upshape K}_2)-\ms{\upshape Wr}(\mb{\upshape K}_1)}G^{\scriptsize\ms{Wd}(\mb{\upshape K}_2)}\mb{w}_{H}(\mb{\upshape K}_2),
\end{split}
\]
where we use (\ref{T}) and the fact that $G$ and $\mb{w}_{H}(\mb{T})$, $\mb{\upshape T}\in\mb{\upshape Curl}$, commute.

By Schur's lemma, the ribbon element $v$ acts by multiplying a scalar $c\in k$. Hence,
\begin{equation*}
\mb{\upshape Inv}_{H,M}(\mb{\upshape K}_1)=
c^{\scriptsize\ms{\upshape Wr}(\mb{\upshape K}_2)-\ms{\upshape Wr}(\mb{\upshape K}_1)}
\mb{\upshape Inv}_{H,M}(\mb{\upshape K}_2)
\end{equation*}
by \eqref{eq:6}, which also implies that $\overline{\mb{\upshape Inv}}_{H,M}$ defined by\eqref{inv1} is an ambient isotopy invariant.
\end{proof}

\begin{remark}
From \cite[Thm. 4]{Rad2}, we know that when turning to representations of the ribbon Hopf algebra $(H, R, v)$, the construction by bead-sliding has a close relation with the classical construction of quantum link invariants. In general, it coincides with the one constructed from a ribbon category, due to Reshetikhin-Turaev \cite{RT1}. The regular invariant $\mb{\upshape Inv}_{H,M}$ serves as an invariant of oriented framed links, and when normalized by a ``twist factor'', it becomes the one, $\overline{\mb{\upshape Inv}}_{H,M}$, which does not depend on framing. One can also check the detail from \cite[Thm. 1.2]{MPS}.

We also note that the initial value $\overline{\mb{Inv}}_{H,M}(\mb{O})=\chi_M(u^{-1}v)$, for the unknot $\mb{O}$, is just the usual {\itshape quantum dimension} of $(\pi,M)$.
\end{remark}

\section{From the Drinfel'd double $D(\bar{\mathscr{A}})$ to knot invariants}

Now we are in the position to apply the construction introduced in the previous section to our target $D(\bar{\mathscr{A}})$.
\subsection{Representations of the Drinfel'd double $D(\bar{\mathscr{A}})$}
Throughout the rest of paper, we will concentrate on the TOQA
\[\lb D(\bar{\mathscr{A}}),\, \mathcal
{R},\, \mbox{Id}_{D(\bar{\mathscr{A}})},\, S^2\ot s^{-2},\, u^{-1}v\rb.\]
For convenience, we always assume that $D(\bar{\mathscr{A}})$ is a ribbon Hopf algebra throughout the rest of this section, which requires $\ell$ to be odd by Thm. \ref{ribb}. In the later computation, when $q$ is a primitive $\ell$-th root of unity, we see that $q^{(\ell\pm1)i/2}=(-1)^iq^{\pm i/2}
,\, i\in\mathbb{Z}$.

From \cite[Cor. 1]{Rad}, we see that for a special kind of graded Hopf algebras, the author had totally portrayed their simple Yetter-Drinfel'd modules. The description is the following.

{\itshape Let $H=\bigoplus\nolimits_{i=0}^\infty H_{(i)}$ be a ($\mathbb{Z}$-)graded Hopf algebra over an algebraically closed field $k$. Suppose that $H_{(0)}=kG$, where $G$ is a finite abelian group, $H_{(0)}\simeq k\times\cdots\times k$ as algebras, and  $H_{(i)}=H_{(i+1)}=\cdots=0$ for some $i>0$. Then
\[(\be,g)\mapsto[H_{\be,g}]\]
is a bijection between the Cartesian product of sets of group-like elements
$G(H^\circ)\times G=G(H^\circ)\times G(H)$ and the set of isomorphism classes of the simple Yetter-Drinfel'd  $H$-modules, where $H^\circ$ is the dual Hopf algebra of $H$.}

\medskip
Next we give the explicit definition of $H_{\be,g}\in{_H\mathcal {Y}\mathcal {D}^H}$. First one can easily check that for any $\be\in G(H^\circ)$, $H_\be=(H,\bullet_\be,\De)\in{_H\mathcal {Y}\mathcal {D}^H}$, where the module action $\bullet_\be$ is defined as
\[h\bullet_\be a=(h_{(2)}\leftharpoonup\mk(\be))\,a\,\vs(h_{(1)}), \quad\forall\, h, a\in H.\]
Under such action, we know that $H_\be$ inherits the graded structure of $H$, and becomes a graded left $H$-module. Now take
\[H_{\be,g}=H\bullet_\be g,\]
then it is a simple Yetter-Drinfel'd $H$-submodule of $H_\be$.

\medskip
In the case when $H=\bar{\mathscr{A}}_q(n)$, take $\bar{\mathscr{A}}_{(i)}=\ms{span}_k\bigl\{\,x^\ga K(\al)\mid \ga\in\La_+^{\leq\ka}, \, |\ga|=i\,\bigr\}$, then
 $\bar{\mathscr{A}}_q(n)=\bigoplus\nolimits_{i=0}^\infty \bar{\mathscr{A}}_{(i)}$ is a graded Hopf algebra, with
 the coradical $\bar{\mathscr{A}}_{(0)}=kG\lb\bar{\mathscr{A}}_q(n)\rb\cong k\mathbb{Z}_\ell^n$, by Proposition \ref{m1}, i.e., $\bar{\mathscr{A}}_q(n)$ is pointed.
 Moreover, the Jacobson radical of $\bar{\mathscr{A}}_q(n)$ is
 $\bigoplus\nolimits_{i=1}^\infty \bar{\mathscr{A}}_{(i)}$, with $\bar{\mathscr{A}}_{(i)}=0$ for all $i>n(\ell-1)$.

Since $\bar{\mathscr{A}}_q(n)$ is finite-dimensional, its dual is just $\bar{\mathscr{A}}_q(n)^*$.
Hence by \cite[Cor. 1]{Rad} and Proposition \ref{m2}, we have
\begin{theorem}\label{dmod}
Any left simple $D(\bar{\mathscr{A}})$-module has the form  $\lb\bar{\mathscr{A}}_q(n)\rb_{\mk(\al),K(\be)}$, $\al, \be\in\La_+^{\leq\ka}$, simply denoted by $\bar{\mathscr{A}}_{\al,\be}$.
\end{theorem}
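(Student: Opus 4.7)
The plan is to apply Radford's classification of simple Yetter-Drinfel'd modules (\cite[Cor.~1]{Rad}, as recalled above) to the Hopf algebra $H = \bar{\mathscr{A}}_q(n)$, since by Majid's theorem the category ${_{D(\bar{\mathscr{A}})}}\mathscr{M}$ is equivalent to ${_H}\mathcal{Y}\mathcal{D}^H$. Concretely, the task reduces to verifying that $\bar{\mathscr{A}}_q(n)$ fits the hypotheses of the cited theorem and then reading off the parametrizing sets from Propositions \ref{m1} and \ref{m2}.

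First, I would set up the $\mathbb{Z}$-grading on $\bar{\mathscr{A}}_q(n)$ by length of the monomial $x^\ga$, namely
\[\bar{\mathscr{A}}_q(n)=\bigoplus_{i=0}^{n(\ell-1)}\bar{\mathscr{A}}_{(i)},\qquad \bar{\mathscr{A}}_{(i)}=\ms{span}_k\bigl\{\,x^\ga K(\al)\mid \al, \ga\in\La_+^{\leq\ka},\, |\ga|=i\,\bigr\},\]
and check that this is a Hopf algebra grading. Compatibility with the product is immediate from relations $\ms{(r1)}$--$\ms{(r4)}$ and the fact that the relations are homogeneous in the $x_i$'s; compatibility with the coproduct follows from formula \eqref{eq:1}, which shows $\De(\bar{\mathscr{A}}_{(i)})\subseteq\bigoplus_{j+k=i}\bar{\mathscr{A}}_{(j)}\otimes\bar{\mathscr{A}}_{(k)}$; the counit and antipode are likewise graded by \eqref{eq:2} and \eqref{eq:3}.

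Next I would verify the three remaining hypotheses of \cite[Cor.~1]{Rad}. (i) The degree-zero component equals $k\,G(\bar{\mathscr{A}}_q(n))$: by Proposition \ref{m1}, $G(\bar{\mathscr{A}}_q(n))=\{K(\al)\mid\al\in\La_+^{\leq\ka}\}\cong\mathbb{Z}_\ell^{\times n}$, and this group clearly spans $\bar{\mathscr{A}}_{(0)}$. (ii) As $k$ is algebraically closed of characteristic $0$ and $G(\bar{\mathscr{A}}_q(n))$ is a finite abelian group of order $\ell^n$, the group algebra decomposes as $\bar{\mathscr{A}}_{(0)}\cong k^{\times\ell^n}$ (equivalently, the characters separate points). (iii) The grading terminates because $x_i^\ell=0$ in $\bar{\mathscr{A}}_q(n)$, so $\bar{\mathscr{A}}_{(i)}=0$ for $i>n(\ell-1)$.

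With the hypotheses in place, \cite[Cor.~1]{Rad} yields a bijection
\[(\mk(\al),K(\be))\in G(\bar{\mathscr{A}}_q(n)^*)\times G(\bar{\mathscr{A}}_q(n))\;\longmapsto\;[\bar{\mathscr{A}}_{\mk(\al),K(\be)}]\]
between the indicated Cartesian product and isomorphism classes of simple Yetter-Drinfel'd $\bar{\mathscr{A}}_q(n)$-modules, where $\bar{\mathscr{A}}_{\mk(\al),K(\be)}=\bar{\mathscr{A}}_q(n)\bullet_{\mk(\al)}K(\be)$ with the twisted action recalled before the statement. Finally, I would use Proposition \ref{m2} to identify $G(\bar{\mathscr{A}}_q(n)^*)=\{\mk(\al)\mid\al\in\La_+^{\leq\ka}\}$ and again Proposition \ref{m1} for $G(\bar{\mathscr{A}}_q(n))$, so that the parameters range precisely over $(\al,\be)\in\La_+^{\leq\ka}\times\La_+^{\leq\ka}$, giving the simple modules $\bar{\mathscr{A}}_{\al,\be}$ as claimed. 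The only non-mechanical step is checking the graded Hopf-algebra structure, but this is essentially a repackaging of the formulas \eqref{eq:1}--\eqref{eq:3} already established; the rest is bookkeeping and citation.
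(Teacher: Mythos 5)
Your proposal is correct and follows essentially the same route as the paper: both establish the $\mathbb{Z}$-grading of $\bar{\mathscr{A}}_q(n)$ by $|\ga|$, verify the hypotheses of Radford's classification of simple Yetter--Drinfel'd modules over such graded Hopf algebras, and then read off the parametrization from Propositions \ref{m1} and \ref{m2}. Your write-up merely spells out the verification of the graded Hopf structure in slightly more detail than the paper does.
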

Let us start to study these simple left $D(\bar{\mathscr{A}})$-modules.

First, we should clarify the module action of $\lb\bar{\mathscr{A}}_{\al,\be},\cdot\rb$  as a left $D(\bar{\mathscr{A}})$-module:
\begin{equation}
\label{eq10}
\begin{split}
x^\ga K(\xi)\bullet_{\mk(\al)}K(\be)&=\sum\limits_{\ze\in\La_+^{\leq\ga}}{\ga\choose\ze}
q^{-(\ga-\ze)*\ze}\lb x^\ze K(\xi)\leftharpoonup \mk(\al)\rb K(\be)\,s^{-1}\lb x^{\ga-\ze}K(\ze+\xi)\rb\\
&=\mu(\ga,\xi:\al,\be)x^\ga K(\be-\ga),
\end{split}
\end{equation}
where we set
\[\mu(\ga,\xi:\al,\be)=\sum\limits_{\ze\in\La_+^{\leq\ga}}(-1)^{|\ga-\ze|}{\ga\choose\ze}
q^{\tfrac{1}{2}\lan 2\be-\ga-\ze+\pmb{1},\ga-\ze\ran+\lan \al,\xi+\ze\ran}\te(\be+\ga-\ze,\ga-\ze).\]
In addition,
\begin{equation}
\label{eq11}
\begin{split}
\mx^\ga\mk(\om)\rightharpoonup x^\eta K(\be-\eta)&=\sum\limits_{\xi\in\La_+^{\leq\eta}}{\eta\choose\xi}
q^{-(\eta-\xi)*\xi}\mx^\ga\mk(\om)\lb x^\xi K(\be-\eta)\rb x^{\eta-\xi}K(\xi+\be-\eta)\\
&={\eta\choose\ga}q^{-(\eta-\ga)*\ga+\lan\om,\be-\eta\ran}x^{\eta-\ga}K(\be+\ga-\eta).
\end{split}
\end{equation}
Note that in Eq. (\ref{eq11}), if $\ga\nleq\eta$, $\displaystyle{{\eta\choose\ga}=0}$, so the corresponding result should be interpreted as $0$.

Combining Eq. (\ref{eq10}) with Eq. (\ref{eq11}), we obtain the formula of the module action of  $(\bar{\mathscr{A}}_{\al,\be},\cdot)$ as a left $D(\bar{\mathscr{A}})$-module
\begin{equation}
\begin{split}
\lb\mx^\ga\mk(\om)\ot x^\eta K(\xi)\rb\cdot K(\be)&=\mu(\eta,\xi:\al,\be)\lb\mx^\ga\mk(\om)\rightharpoonup x^\eta K(\be-\eta)\rb\\
&={\eta\choose\ga}q^{-(\eta-\ga)*\ga+\lan\om,\be-\eta\ran}\mu(\eta,\xi:\al,\be)x^{\eta-\ga}K(\be+\ga-\eta),
\end{split}
\end{equation}
where $\ga,\eta,\om,\xi\in\La_+^{\leq\ka}$.

From Eq. (\ref{eq10}), we can see that $\bar{\mathscr{A}}_{\al,\be}$ lies in the subspace of $\bar{\mathscr{A}}_q(n)$ expanded by $\bigl\{\,x^\ga K(\be-\ga)\mid \ga\in\La_+^{\leq\ka}\,\bigr\}$. In order to find a $k$-basis of $\bar{\mathscr{A}}_{\al,\be}$, the key is to confirm when the coefficient $\mu(\ga,\xi:\al,\be)$ does not vanish.

To this end, we consider
\begin{equation*}
\begin{split}
x_i\bullet_{\mk(\al)}x^\ga K(\be-\ga)&=\lb x_i\leftharpoonup\mk(\al)\rb x^\ga K(\be-\ga)K(\ep_i)^{-1}\\
&\quad+\lb 1\leftharpoonup\mk(\al)\rb x^\ga K(\be-\ga)\lb -x_iK(\ep_i)^{-1}\rb\\
&=q^{\ep_i*\ga}\lb q^{\lan\al,\ep_i\ran}-\te(\be,\ep_i)q^{\lan\be-\ga,\ep_i\ran}\rb x^{\ga+\ep_i}K(\be-\ga-\ep_i).
\end{split}
\end{equation*}

Now let $x_i\bullet_{\mk(\al)}x^\ga K(\be-\ga)=0$, for $1\le  i\le n$. If $x^{\ga+\ep_i}\neq0$, then $q^{\lan\al,\ep_i\ran}-\te(\be,\ep_i)q^{\lan\be-\ga,\ep_i\ran}=0$, i.e., $q^{\lan\al+\ga,\ep_i\ran}=q^{\al_i+\ga_i}=\te(\be,\ep_i)q^{\lan\be,\ep_i\ran}=q^{-\tilde{\be}_i+2\be_i}$.
Since for $\al\in\La$,
there exists a unique $[\al]=([\al_1],\cdots,[\al_n])\in\La_+^{\leq\ka}$,
such that $[\al]\equiv\al~(\ms{mod}~\iota)$,
we can solve that
\[\ga=[-\tilde{\be}+2\be-\al],\]
and set
\[\ka(\al,\be)=[-\tilde{\be}+2\be-\al], \ \al,\; \be\in\La_+^{\leq\ka}.\]

To describe the coefficient $\mu(\ga,\xi:\al,\be)$, we need to extract a linearly ordered subset from $\La_+^{\leq\ga}$.
Arranging that subset from left to right, we have
\[0,~\ep_n,~2\ep_n,~\ldots,~\ga_n\ep_n,~\ep_{n-1}+\ga_n\ep_n,~2\ep_{n-1}+\ga_n\ep_n,~\ldots,~
\ga_{n-1}\ep_{n-1}+\ga_n\ep_n,~\ldots,~\ga.\]
Denote the subset by $\underline{\ga}$.

For $\eta\in\underline{\ga}$, take $i(\eta)\in\{1,\cdots,n\}$ to be the unique subscript such that $\eta+\ep_{i(\eta)}$ is the successor of $\eta$
in terms of the linear order above.
 Then as $x^\ga=x_1^{\ga_1}\cdots x_n^{\ga_n}$,
 we can compute $x^\ga K(\xi)\bullet_{\mk(\al)}K(\be)=q^{\lan\al,\xi\ran}x^\ga\bullet_{\mk(\al)}K(\be)$ in the order of $\underline{\ga}$, and get
\begin{equation}\label{eq12}
x^\ga K(\xi)\bullet_{\mk(\al)}K(\be)
=q^{\lan\al,\xi\ran}\prod\limits_{\eta\in\underline{\ga}\backslash\{\ga\}}q^{\ep_{i(\eta)}*\eta}
\lb q^{\lan\al,\ep_{i(\eta)}\ran}-\te(\be,\ep_{i(\eta)})q^{\lan\be-\eta,\ep_{i(\eta)}\ran}\rb x^\ga K(\be-\ga).
\end{equation}

Comparing Eq. (\ref{eq10}) with Eq. (\ref{eq12}), we have
\[\mu(\ga,\xi:\al,\be)=
\begin{cases}
\displaystyle{q^{\lan\al,\xi\ran}\prod\limits_{\eta\in\underline{\ga}\backslash\{\ga\}}q^{\ep_{i(\eta)}*\eta}
\lb q^{\lan\al,\ep_{i(\eta)}\ran}-\te(\be,\ep_{i(\eta)})q^{\lan\be-\eta,\ep_{i(\eta)}\ran}\rb},
& \ga\in\La_+^{\leq\ka(\al,\be)},\\
0, & \mbox{otherwise}.
\end{cases}
\]

Now as $\bar{\mathscr{A}}_{\al,\be}=\bar{\mathscr{A}}_q(n)\bullet_{\mk(\al)}K(\be)$, we have
\begin{proposition}\label{basis}
$\bigl\{\,x^\ga K(\be-\ga)\mid \ga\in\La_+^{\leq\ka(\al,\be)}\,\bigr\}$
is a $k$-basis of $\bar{\mathscr{A}}_{\al,\be}\,$, for $\,\al, \, \be\in\La_+^{\leq\ka}$.
\end{proposition}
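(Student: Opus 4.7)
The plan is to read the basis off the explicit formula for $\mu(\ga,\xi:\al,\be)$ derived just above the statement, starting from the definition $\bar{\mathscr{A}}_{\al,\be}=\bar{\mathscr{A}}_q(n)\bullet_{\mk(\al)}K(\be)$. By Eq.~(\ref{eq10}), every generator $x^\ga K(\xi)\bullet_{\mk(\al)}K(\be)$ is a scalar multiple of $x^\ga K(\be-\ga)$, so $\bar{\mathscr{A}}_{\al,\be}\subseteq\ms{span}_k\{x^\ga K(\be-\ga)\mid\ga\in\La_+^{\leq\ka}\}$. These vectors form part of the PBW basis of $\bar{\mathscr{A}}_q(n)$, so linear independence of any subfamily is automatic, and the whole claim reduces to deciding, for each $\ga$, whether $\mu(\ga,\xi:\al,\be)\neq0$ for some (equivalently, for every) $\xi$.

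The overall factor $q^{\lan\al,\xi\ran}$ in $\mu$ never vanishes, so the answer depends only on the product
\[\prod_{\eta\in\underline{\ga}\setminus\{\ga\}}\lb q^{\lan\al,\ep_{i(\eta)}\ran}-\te(\be,\ep_{i(\eta)})q^{\lan\be-\eta,\ep_{i(\eta)}\ran}\rb.\]
Writing $i=i(\eta)$ and unpacking the exponents, the $\eta$-th factor vanishes iff $q^{\al_i+\eta_i}=q^{-\tilde\be_i+2\be_i}$, i.e., iff $\eta_i\equiv\ka(\al,\be)_i\pmod{\ell}$.

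For the ``if'' direction I would use a uniform bound on $\eta_{i(\eta)}$: by the very definition of $i(\eta)$, the successor $\eta+\ep_{i(\eta)}$ lies in $\underline\ga\subseteq\La_+^{\leq\ga}$, forcing $\eta_{i(\eta)}<\ga_{i(\eta)}$. If moreover $\ga\leq\ka(\al,\be)$, this chains to $\eta_{i(\eta)}<\ga_{i(\eta)}\leq\ka(\al,\be)_{i(\eta)}\leq\ell-1$, and since both sides of the forbidden congruence lie in $\{0,\dots,\ell-1\}$ it cannot hold. Hence every factor is non-zero, so $\mu(\ga,\xi:\al,\be)\neq0$ and $x^\ga K(\be-\ga)\in\bar{\mathscr{A}}_{\al,\be}$.

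The main obstacle is the converse: for $\ga\not\leq\ka(\al,\be)$, one must actually exhibit a vanishing factor. Let $j$ be the largest index with $\ga_j>\ka(\al,\be)_j$, and take $\eta:=\ka(\al,\be)_j\ep_j+\sum_{i>j}\ga_i\ep_i$. By construction $\eta_i=\ga_i$ for $i>j$, $\eta_j=\ka(\al,\be)_j<\ga_j$, and $\eta_i=0$ for $i<j$, so $\eta\in\underline\ga\setminus\{\ga\}$ with $i(\eta)=j$ (using $\ga_j\geq 1$ to handle the boundary case $\ka(\al,\be)_j=0$). Then $\eta_{i(\eta)}=\ka(\al,\be)_{i(\eta)}$ produces the vanishing factor, forcing $\mu(\ga,\xi:\al,\be)=0$ for every $\xi$ and completing the proof.
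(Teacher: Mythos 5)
Your proposal is correct and follows essentially the same route as the paper: both read the claim off the product formula for $\mu(\ga,\xi:\al,\be)$ over the chain $\underline{\ga}$, with each factor vanishing exactly when $\eta_{i(\eta)}\equiv\ka(\al,\be)_{i(\eta)}\pmod{\ell}$. The only difference is that you spell out the two verifications the paper leaves implicit in its displayed case formula for $\mu$ — the uniform bound $\eta_{i(\eta)}<\ga_{i(\eta)}\leq\ka(\al,\be)_{i(\eta)}$ for the non-vanishing direction, and the explicit element of $\underline{\ga}$ producing a zero factor when $\ga\nleq\ka(\al,\be)$ — and both of these details check out.
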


For later use, we also give a criterion when the irreducible module $\bar{\mathscr{A}}_{\al,\be}$ 
is self-dual.
\begin{theorem}\label{sdual}
When $\ell$ is odd, the irreducible module $\bar{\mathscr{A}}_{\al,\be}\,(\al,\be\in\La_+^{\leq\ka})$ of $D(\bar{\mathscr{A}}_n(q))$ is self-dual if and only if there exists a unique tuple $r=(r_1,\dots,r_n)\in\La_+^{\leq\ka}$ such that
\[\be=[(i_0r_1,\dots,i_0r_n)],~
\al=[-\tilde \be],\]
where $i_0=(\ell+1)/2$, and we interpret the tuples above modulo $\ell$. In particular, if it is the case, then \upshape{dim}$(\bar{\mathscr{A}}_{\al,\be})=(r_1+1)\cdots(r_n+1)$ as $\ka(\al,\be)=r$, and any self-dual irreducible module is uniquely determined by this tuple $\ka(\al,\be)\in\La_+^{\leq\ka}$.
\end{theorem}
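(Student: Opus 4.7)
The plan is to compute the dual module $\bar{\mathscr{A}}_{\al,\be}^*$ explicitly and match it to another simple module via Theorem~\ref{dmod}. Since $D(\bar{\mathscr{A}})$ is finite dimensional, $\bar{\mathscr{A}}_{\al,\be}^*$ is again simple of the same dimension, hence isomorphic to $\bar{\mathscr{A}}_{\al^\vee,\be^\vee}$ for unique $\al^\vee,\be^\vee\in\La_+^{\leq\ka}$; self-duality then amounts to $(\al^\vee,\be^\vee)=(\al,\be)$.

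The first step is to identify highest weight vectors and their eigenvalues. By Eq.~(\ref{eq11}), the generator $K(\be)\in\bar{\mathscr{A}}_{\al,\be}$ is killed by every $\mx_i$, and Eqs.~(\ref{eq10}) and (\ref{eq11}) give its $K(\ep_i)$- and $\mk(\ep_i)$-eigenvalues as $q^{\al_i}$ and $q^{\be_i}$ respectively; these characterise the isomorphism class. Put $r:=\ka(\al,\be)$ and let $v_\bot:=x^rK(\be-r)$ be the bottom vector in the grading. A direct computation---writing $v_\bot=\mu(r,0:\al,\be)^{-1}x^r\bullet_{\mk(\al)}K(\be)$, pushing $K(\ep_i)$ across $x^r$ by (r3), and using the identity $\prod_j\te(\ep_i,\ep_j)^{r_j}=q^{\tilde r_i-r_i}$---yields the $K(\ep_i)$-eigenvalue $q^{\al_i+\tilde r_i}$ of $v_\bot$, while Eq.~(\ref{eq11}) gives its $\mk(\ep_i)$-eigenvalue $q^{\be_i-r_i}$.

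Because $\mx_i$ lowers the $\ga$-degree in $\bar{\mathscr{A}}_{\al,\be}$, it raises that degree in the dual, hence $v_\bot^*$ lies in the top graded piece of $\bar{\mathscr{A}}_{\al,\be}^*$ and is annihilated by every $\mx_i$, so it serves as the highest weight vector of $\bar{\mathscr{A}}_{\al^\vee,\be^\vee}$. The Drinfel'd double antipode sends $\ve\bowtie K(\ep_i)$ to $\ve\bowtie K(-\ep_i)$ and $\mk(\ep_i)\bowtie 1$ to $\mk(-\ep_i)\bowtie 1$ (since the antipode of $(H^*)^{\bs{cop}}$ is $(s^*)^{-1}$, an involution on group-likes). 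Thus the eigenvalues of $K(\ep_i)$ and $\mk(\ep_i)$ on $v_\bot^*$ are the inverses of those on $v_\bot$, giving
\[\al^\vee\equiv-\al-\tilde r,\qquad\be^\vee\equiv r-\be\pmod\iota.\]

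Finally, self-duality is equivalent to $2\al+\tilde r\equiv 0$ and $2\be\equiv r\pmod\iota$. Since $\ell$ is odd, $2$ is invertible in $\mathbb{Z}_\ell$ with inverse $i_0=(\ell+1)/2$, so these congruences give $\be=[(i_0r_1,\dots,i_0r_n)]$ and $\al=[-i_0\tilde r]=[-\tilde\be]$. Plugging back, one verifies $\ka(\al,\be)=[-\tilde\be+2\be-\al]=[2\be]=r$, confirming both the uniqueness of the parameter $r\in\La_+^{\leq\ka}$ and the dimension $(r_1+1)\cdots(r_n+1)$ via Proposition~\ref{basis}. The main technical hurdle is the direct weight computation on $v_\bot$: the twist $\tilde r_i$ arising from the skew constants $\te(\ep_i,\ep_j)$ is precisely what forces the asymmetric-looking condition $\al=[-\tilde\be]$ in the final characterization.
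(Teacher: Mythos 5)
Your proposal is correct, and it takes a genuinely different (and in one respect more complete) route than the paper's. The paper computes the same two eigenvalue families, $K(\xi)\bullet_{\mk(\al)}x^\ga K(\be-\ga)=q^{\lan\al+\tilde\ga,\xi\ran}x^\ga K(\be-\ga)$ and $\mk(\om)\rightharpoonup x^\ga K(\be-\ga)=q^{\lan\om,\be-\ga\ran}x^\ga K(\be-\ga)$, but then argues through the spectral criterion that self-duality forces the eigenvalues of each group-like to be closed under inversion, pairing $\ga$ with $\ga'$ (in effect pairing $\ga=0$ with $\ga'=[2\be]$); its converse direction only re-exhibits such pairs, i.e.\ re-verifies this necessary condition rather than producing an isomorphism. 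You instead use Theorem \ref{dmod} to write $\bar{\mathscr{A}}_{\al,\be}^*\cong\bar{\mathscr{A}}_{\al^\vee,\be^\vee}$ and determine $(\al^\vee,\be^\vee)$ outright by locating the vector of the dual killed by all $\mx_i$, namely $v_\bot^*$ with $v_\bot=x^rK(\be-r)$, and inverting its group-like eigenvalues via the antipode; the resulting formulas $\al^\vee\equiv-\al-\tilde r$, $\be^\vee\equiv r-\be$ give both implications simultaneously once $2$ is inverted by $i_0$, and they agree with the paper's displayed weights (your identity $\prod_j\te(\ep_i,\ep_j)^{r_j}=q^{\tilde r_i-r_i}$ is exactly the source of the twist $\tilde r$, hence of $\al=[-\tilde\be]$). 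What your route buys is an honest sufficiency proof and an explicit dual-module formula valid for every $(\al,\be)$; what the paper's buys is brevity. The one step you should make explicit is that the joint kernel of the $\mx_i$ in any $\bar{\mathscr{A}}_{\al^\vee,\be^\vee}$ is one-dimensional, spanned by $K(\be^\vee)$ --- since $\mx_i\rightharpoonup x^\ga K(\be^\vee-\ga)$ carries the factor ${\ga\choose\ep_i}=(\ga_i)_q\neq0$ whenever $\ga_i\geq1$ --- so that $v_\bot^*$ necessarily corresponds to $K(\be^\vee)$ and its eigenvalues really do pin down $(\al^\vee,\be^\vee)$ within $\La_+^{\leq\ka}\times\La_+^{\leq\ka}$.
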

\begin{proof}
Since $(i_0,\ell)=1$, given $\be\in\La_+^{\leq\ka}$ there exists a unique $(r_1,\dots,r_n)\in\La_+^{\leq\ka}$ such that $\be=[(i_0r_1,\dots,i_0r_n)]$. Now what we need to compute is the action of $K(\xi),~\mk(\om),~\xi,\om\in\La_+^{\leq\ka}$ on $\bar{\mathscr{A}}_{\al,\be}$. First, we have
\[\begin{array}{l}
K(\xi)\bullet_{\mk(\al)}x^\ga K(\be-\ga)=q^{\lan\al+\tilde\ga,\xi\ran}x^\ga K(\be-\ga),\\
\mk(\om)\rightharpoonup x^\ga K(\be-\ga)=q^{\lan\om,\be-\ga\ran}x^\ga K(\be-\ga)
\end{array}\]
for any $\ga\in\La_+^{\leq\ka(\al,\be)}$.
In order to get a self-dual module, the eigenvalues of each such action in terms of the eigenvectors from Prop.~\ref{basis} should be reciprocal in pairs. More precisely, $\exists \ga,\ga'\in\La_+^{\leq\ka(\al,\be)}$ such that
\[\lan\ep_i,\be_i-\ga_i\ran=-\lan\ep_i,\be_i-\ga'_i\ran,~\lan \al_i+\tilde\ga_i,\ep_i\ran=-\lan\al_i+\tilde\ga'_i,\ep_i\ran,~i=1,\dots,n.\]
And that gives the equality $\al=[-\tilde\be]$. Conversely, if $\al=[-\tilde \be]$, then $\ka(\al,\be)=[2\be]$ and $(x^\ga K(\be-\ga),x^{2\be-\ga}K(-\be+\ga)),\ga\in\La_+^{\leq[2\be]}$ are those pairs as desired.
\end{proof}

\subsection{Knot invariants from the Drinfel'd double $D(\bar{\mathscr{A}})$}
Up to now, all simple left $D(\bar{\mathscr{A}})$-modules have been clarified.
Let us turn to the solution $R_{\bar{\mathscr{A}}_{\al,\be}}$ of the braid equation associated with $\bar{\mathscr{A}}_{\al,\be}$.
Abbreviate $R_{\bar{\mathscr{A}}_{\al,\be}}$ to $R_{\al,\be}$.
To show that the modification (\ref{eq:6}) works well in the case of $D(\bar{\mathscr{A}})$, we will compute some low rank cases below. With no confusion, let us abbreviate $\mb{Inv}_{H,M},\, \overline{\mb{Inv}}_{H,M}$ to $\mb{Inv},\, \overline{\mb{Inv}}$, respectively.

\medskip
\begin{example}\label{ex1}
The Taft algebra ($n=1$). Its simplest representation is the two dimensional irreducible one, $\bar{\mathscr{A}}_{(i-1)\ep_1,i\ep_1}$, $(1\le i\le\ell)$ with $\bar{\mathscr{A}}_{(\ell-1)\ep_1,0}$ for $i=\ell$. Since $\ka((i{-}1)\ep_1,i\ep_1)=\ep_1$,  it has a $k$-basis
$\{\,K(i\ep_1),\ x_1K((i{-}1)\ep_1)\,\}$.
By (\ref{eq0}), we have
\begin{align*}
R_{(i-1)\ep_1,i\ep_1}&(K(i\ep_1)\ot K(i\ep_1))=K(i\ep_1)\ot
\lb K(i\ep_1)\bullet_{\mk((i-1)\ep_1)}K(i\ep_1)\rb \\
&=q^{i(i-1)}K(i\ep_1)\ot K(i\ep_1),\\
R_{(i-1)\ep_1,i\ep_1}&(K(i\ep_1)\ot x_1K((i-1)\ep_1))\\
&=x_1K((i-1)\ep_1)\ot\lb K((i-1)\ep_1)\bullet_{\mk((i-1)\ep_1)}K(i\ep_1)\rb\\
&\quad+K(i\ep_1)\ot\lb x_1K((i-1)\ep_1)\bullet_{\mk((i-1)\ep_1)}K(i\ep_1)\rb\\
&=q^{(i-1)^2}(x_1K((i-1)\ep_1)\ot K(i\ep_1))\\
&\quad+\lb q^{i(i-1)}-q^{i(i-1)+1}\rb (K(i\ep_1)\ot x_1K((i-1)\ep_1)),\\
R_{(i-1)\ep_1,i\ep_1}&(x_1K((i-1)\ep_1)\ot K(i\ep_1))=K(i\ep_1)\ot \lb K(i\ep_1)\bullet_{\mk((i-1)\ep_1)}x_1K((i-1)\ep_1\rb\\
&=q^{i^2}K(i\ep_1)\ot x_1K((i-1)\ep_1),\\
R_{(i-1)\ep_1,i\ep_1}&(x_1K((i-1)\ep_1)\ot x_1K((i-1)\ep_1))\\
&=x_1K((i-1)\ep_1)\ot\lb K((i-1)\ep_1)\bullet_{\mk((i-1)\ep_1)}x_1K((i-1)\ep_1)\rb\\
&\quad+K(i\ep_1)\ot\lb x_1K((i-1)\ep_1)\bullet_{\mk((i-1)\ep_1)}x_1K((i-1)\ep_1)\rb\\
&=q^{i(i-1)}x_1K((i-1)\ep_1)\ot x_1K((i-1)\ep_1).
\end{align*}
Therefore, under the given $k$-basis
\begin{gather*}
\Big\{\,K(i\ep_1)\ot K(i\ep_1),\ K(i\ep_1)\ot x_1K((i{-}1)\ep_1),\\
x_1K((i{-}1)\ep_1)\ot K(i\ep_1), \ x_1K((i{-}1)\ep_1)\ot x_1K((i{-}1)\ep_1)\,\Big\}
\end{gather*}
of $\bar{\mathscr{A}}_{(i-1)\ep_1,i\ep_1}\ot\bar{\mathscr{A}}_{(i-1)\ep_1,i\ep_1}$,
$R_{(i-1)\ep_1,i\ep_1}$ is presented by
\[
\begin{pmatrix}
q^{i(i-1)} & 0 & 0 & 0\\
0 & q^{i(i-1)}(1-q) & q^{i^2} & 0\\
0 & q^{(i-1)^2} & 0 & 0\\
0 & 0 & 0 & q^{i(i-1)}
\end{pmatrix}.
\]

In particular, choosing $i=i_0:=(\ell{+}1)/2$, we get the self-dual irreducible one, $\bar{\mathscr{A}}_{(i_0-1)\ep_1,i_0\ep_1}$, with the following symmetric braiding $R$-matrix, as $q^{(i_0-1)^2}=q^{i_0^2}$ when $\ell$ is odd.
\[R_{(i_0-1)\ep_1,i_0\ep_1}=
\begin{pmatrix}
q^{-1/4} & 0 & 0 & 0\\
0 & q^{-1/4}-q^{3/4} & q^{1/4} & 0\\
0 & q^{1/4} & 0 & 0\\
0 & 0 & 0 & q^{-1/4}
\end{pmatrix},
\]
with the minimal polynomial
\begin{equation}\label{poly1}
\ms{minpoly}(R_{(i_0-1)\ep_1,i_0\ep_1})=\lb x-q^{-1/4}\rb\lb x+q^{3/4}\rb,
\end{equation}
where we abuse to denote $q^{1/4}:=q^{(\ell+1)^2/4}$ as a fourth root of $q$, and $q^{i/4}:=q^{(\ell+1)^2i/4}$ for any $i\in \mathbb Z$. In particular, $q^{(\ell^2-1)/4}=q^{-(\ell+1)^2/4}=q^{-1/4}$.

Under the basis
$\{\,K(i_0\ep_1), \ x_1K((i_0{-}1)\ep_1)\,\}$,
 $u$ is presented by
${\rm diag}\{q^{5/4}, q^{1/4}\}$,
while $v$ is presented by $q^{3/4}\mbox{Id}_{\bar{\mathscr{A}}_{(i_0-1)\ep_1,i_0\ep_1}}$, which we obtain from (\ref{rib}).

As
\[q^{-1/4}R_{(i_0-1)\ep_1,i_0\ep_1}-q^{1/4}R_{(i_0-1)\ep_1,i_0\ep_1}^{-1}=
\lb q^{-1/2}-q^{1/2}\rb\mbox{Id}_{\bar{\mathscr{A}}_{(i_0-1)\ep_1,i_0\ep_1}},\]
we see that
\[q^{-1}\overline{\mb{Inv}}
\lb\xy
0;/r.1pc/:\ar(-6,-6)*{};(6,6)*{};\POS?(.5)*{\hole}="x";
    (-6,6)*{};"x"+(-1.3,1.3)**\crv{};
    \ar(6,-6)*{};"x"+(1.3,-1.3);
\endxy\rb
-q\overline{\mb{Inv}}
\lb\xy
0;/r.1pc/:\ar(6,-6)*{};(-6,6)*{};\POS?(.5)*{\hole}="x";
    (6,6)*{};"x"+(1.3,1.3)**\crv{};
    \ar(-6,-6)*{};"x"+(-1.3,-1.3);
\endxy\rb
=\lb q^{-1/2}-q^{1/2}\rb\overline{\mb{Inv}}
\lb\xy
0;/r.1pc/:(-6,6)*{};(-6,-6)*{}**\crv{(4,0)}?(1)*\dir{>};
(6,6)*{};(6,-6)*{}**\crv{(-4,0)}?(1)*\dir{>};
\endxy\rb.\]
Since $\overline{\mb{Inv}}(\mb{O})=\chi(u^{-1}v)=q^{-\tfrac{1}{2}}+q^{\tfrac{1}{2}}$, it means that $\overline{\mb{Inv}}$ constructed here is nothing but the specialization of the {\itshape Jones polynomial} $V_L(t)$ at
$t=q$, where $V_L(t)$ satisfies the following skein relations:
\[\begin{array}{l}
t^{-1}V_{\left\lan\xy
0;/r.08pc/:\ar(-6,-6)*{};(6,6)*{};\POS?(.5)*{\hole}="x";
    (-6,6)*{};"x"+(-1.3,1.3)**\crv{};
    \ar(6,-6)*{};"x"+(1.3,-1.3);
\endxy\right\ran}(t)
-tV_{\left\lan
\xy 0;/r.08pc/:\ar(6,-6)*{};(-6,6)*{};\POS?(.5)*{\hole}="x";
    (6,6)*{};"x"+(1.3,1.3)**\crv{};
    \ar(-6,-6)*{};"x"+(-1.3,-1.3);
\endxy\right\ran}(t)
=\lb t^{-1/2}-t^{1/2}\rb V_{\left\lan\xy
0;/r.08pc/:(-6,6)*{};(-6,-6)*{}**\crv{(4,0)}?(1)*\dir{>};
(6,6)*{};(6,-6)*{}**\crv{(-4,0)}?(1)*\dir{>};
\endxy\right\ran}(t),\\
V_{ L~\sqcup~\xy
0;/r.08pc/:(-4,0);(4,0)**\crv{(-4,6)&(4,6)};(-4,0);(4,0)**\crv{(-4,-6)&(4,-6)};
\endxy\,}(t)=\lb t^{-1/2}+t^{1/2}\rb V_L(t),~V_{\xy
0;/r.08pc/:(-4,0);(4,0)**\crv{(-4,6)&(4,6)};(-4,0);(4,0)**\crv{(-4,-6)&(4,-6)};
\endxy}(t)=t^{-1/2}+t^{1/2}.
\end{array}\]

\end{example}

\begin{remark}
(1) The version of $V_L(t)$ used here differs from the usual one, \cite[1.2]{Oht}, by a multiple $(-1)^{\sharp L}\lb -t^{-1/2}-t^{1/2}\rb$, where $\sharp L$ is the number of components of $L$. Note that $R_{(i_0-1)\ep_1,i_0\ep_1}$ is related to the {\itshape Kauffman bracket} $\Big\lan\cdot\Big\ran(A)$, a regular invariant of unoriented links defined by
\[\hspace{-3em}
\xy
0;/r.1pc/:
(-6,6)*{};(-1.5,1.5)*{}**\dir{-};
(6,-6)*{};(1.5,-1.5)*{}**\dir{-};
(-6,-6)*{};(6,6)*{}**\dir{-};
\endxy=
A~\xy
0;/r.1pc/:
(-6,-6)*{};(-6,6)*{}**\crv{(0,0)};
(6,6)*{};(6,-6)*{}**\crv{(0,0)};
\endxy+
A^{-1}~\xy
0;/r.1pc/:
(-6,-6)*{};(6,-6)*{}**\crv{(0,0)};
(6,6)*{};(-6,6)*{}**\crv{(0,0)};
\endxy~,\]
\[
\xy
0;/r.1pc/:
(-6,6)*{};(-1.5,1.5)*{}**\dir{-};
(5,-5)*{};(1.5,-1.5)*{}**\dir{-};
(-6,-6)*{};(5,5)*{}**\dir{-};
(5,5)*{};(5,-5)*{}**\crv{(10,10)&(10,-10)};
\endxy =-A^3~\xy
0;/r.1pc/:
(-6,6)*{};(-6,-6)*{}**\crv{(0,0)};
\endxy~,~
\xy
0;/r.1pc/:
(-6,-6)*{};(-1.5,-1.5)*{}**\dir{-};
(5,5)*{};(1.5,1.5)*{}**\dir{-};
(-6,6)*{};(5,-5)*{}**\dir{-};
(5,5)*{};(5,-5)*{}**\crv{(10,10)&(10,-10)};
\endxy=-A^{-3}~\xy
0;/r.1pc/:
(-6,6)*{};(-6,-6)*{}**\crv{(0,0)};
\endxy~,
\]
and $\left\lan L\sqcup\xy
0;/r.1pc/:(-4,0);(4,0)**\crv{(-4,6)&(4,6)};(-4,0);(4,0)**\crv{(-4,-6)&(4,-6)};
\endxy\right\ran(A)=\lb -A^2-A^{-2}\rb\Big\lan L\Big\ran(A),~\Big\lan \emptyset\Big\ran(A)=1$. Also,
\[V_L(t)=\left((-A)^{-3\scriptsize\ms{Wr}(L)}\Big\lan L\Big\ran(A)\right)_{A^2=-t^{-1/2}}.\]

As $v$ acts by multiplying $q^{3/4}$ rather than $-q^{3/4}$,
one can check that
\[\mb{Inv}(L)
=(-1)^{\scriptsize\ms{Wr}(L)}\Big\lan L\Big\ran\lb \sqrt{-1}q^{-1/4}\rb=\Big\lan L\Big\ran\lb -\sqrt{-1}q^{-1/4}\rb,\]
becomes a regular invariant of unoriented links, as $(-1)^{\scriptsize\ms{Wr}(L)}$ is independent of the orientation of $L$. It is just the quantum $(\mathfrak{sl}_2,V_2)$ invariant with $q^{1/4}$ replacing by $q^{-1/4}$ \cite[Prop. 4.19. and Th H.1.]{Oht}.

\smallskip\noindent
(2) If we adopt the version of $U_q(\mathfrak{s}\mathfrak{l}_2)$ from the appendix in \cite{Mon}, then the Taft algebra $\bar{\mathscr{A}}_q(1)=T_\ell$ is isomorphic to the positive part $\overline{U}_{q'}(\mathfrak{b})$ of the restricted $\overline{U}_{q'}(\mathfrak{s}\mathfrak{l}_2)$ with $q'=q^{1/4}$, where $q$ is assumed to have a 4-th primitive root of unity. Therefore,
\[D(\bar{\mathscr{A}}_q(1))/\lb K(\ep_1)-\mk(\ep_1)^{-1}\rb\cong \overline{U}_{q'}(\mathfrak{s}\mathfrak{l}_2).\]
Note that $\bar{\mathscr{A}}_{(i_0-1)\ep_1,i_0\ep_1}$ is the only two dimensional {\itshape self-dual} irreducible module of $D(\bar{\mathscr{A}}_q(1))$, i.e. $\bar{\mathscr{A}}_{(i_0-1)\ep_1,i_0\ep_1}\cong {\bar{\mathscr{A}}_{(i_0-1)\ep_1,i_0\ep_1}}^*$, and can be factorized as the fundamental representation of $U_{q'}(\mathfrak{s}\mathfrak{l}_2)$, which fits the fact that the reconstruction of the Jones polynomial as quantum invariants is derived from the fundamental representation of $U_q(\mathfrak{s}\mathfrak{l}_2)$.
\end{remark}

\medskip
\begin{example}\label{ex2}
The $2$-rank Taft algebra. Consider $\bar{\mathscr{A}}_{(\ell-1)\ep_2,i_0(\ep_1+\ep_2)}$, the four dimensional self-dual simple Yetter-Drinfel'd module with $\ka((\ell-1)\ep_2,i_0(\ep_1+\ep_2))=\ep_1+\ep_2$. It has a $k$-basis
\begin{equation}\label{ba}
\big\{K(i_0(\ep_1+\ep_2)),~x_1K((i_0-1)\ep_1+i_0\ep_2),~x_2K(i_0\ep_1+(i_0-1)\ep_2),~x_1x_2K((i_0-1)(\ep_1+\ep_2))\big\}.
\end{equation}
\end{example}

Since $\bar{\mathscr{A}}_{(\ell-1)\ep_2,i_0(\ep_1+\ep_2)}$ as a left $\bar{\mathscr{A}}_q(2)$-module is graded, the annihilator
\[\mbox{\upshape Ann}_{\bar{\mathscr{A}}_q(2)}\lb\bar{\mathscr{A}}_{(\ell-1)\ep_2,i_0(\ep_1+\ep_2)}\rb\]
of $\bar{\mathscr{A}}_{(\ell-1)\ep_2,i_0(\ep_1+\ep_2)}$ is an ideal generated by
\[\left\{x^\ga~\big|~\ga\in\La_+^{\leq\ka},~\ga\neq 0,\ep_1,\ep_2,\ep_1+\ep_2\right\}.\]
In addition, $K(\al),~x_1,~x_2$, and $x_1x_2$ are successively presented by
\[
\begin{array}{cccc}
\begin{pmatrix}
q^{-\al_2} & 0 & 0 & 0\\
0 & q^{\al_1} & 0 & 0\\
0 & 0 & q^{-\al_1} & 0\\
0 & 0 & 0 & q^{\al_2}
\end{pmatrix}, &
\begin{pmatrix}
0 & 0 & 0 & 0\\
1-q & 0 & 0 & 0\\
0 & 0 & 0 & 0\\
0 & 0 & 1-q & 0
\end{pmatrix}, &
\begin{pmatrix}
0 & 0 & 0 & 0\\
0 & 0 & 0 & 0\\
q^{-1}(1-q) & 0 & 0 & 0\\
0 & 1-q & 0 & 0
\end{pmatrix}, &
\begin{pmatrix}
0 & 0 & 0 & 0\\
0 & 0 & 0 & 0\\
0 & 0 & 0 & 0\\
q^{-1}(1-q)^2 & 0 & 0 & 0
\end{pmatrix},
\end{array}
\]
with respect to the basis (\ref{ba}). It follows that the action of $K(\al)$ on $\bar{\mathscr{A}}_{(\ell-1)\ep_2,i_0(\ep_1+\ep_2)}$ is semisimple, while those of other non-zero graded elements are nilpotent.

On the other hand, take $f_{\ga,\al}\in\bar{\mathscr{A}}_q(n)^*$ dual to $x^\ga K(\al)$, and it is presented by
\[
\begin{pmatrix}
\de_{\ga,0}\de_{\al,i_0(\ep_1+\ep_2)} & \de_{\ga,\ep_1}\de_{\al,(i_0-1)\ep_1+i_0\ep_2} & \de_{\ga,\ep_2}\de_{\al,i_0\ep_1+(i_0-1)\ep_2} & \de_{\ga,\ep_1+\ep_2}\de_{\al,(i_0-1)(\ep_1+\ep_2)}\\
0 & \de_{\ga,0}\de_{\al,(i_0-1)\ep_1+i_0\ep_2} & 0 & \de_{\ga,\ep_2}\de_{\al,(i_0-1)(\ep_1+\ep_2)}\\
0 & 0 & \de_{\ga,0}\de_{\al,i_0\ep_1+(i_0-1)\ep_2} & q^{-1}\de_{\ga,\ep_1}\de_{\al,(i_0-1)(\ep_1+\ep_2)}\\
0 & 0 & 0 & \de_{\ga,0}\de_{\al,(i_0-1)(\ep_1+\ep_2)}
\end{pmatrix},
\]
with respect to the basis (\ref{ba}).

From the discussion of the third section, we know that the distinguished group-like element of $\bar{\mathscr{A}}_q(n)$ is $K(\ka)$, while the distinguished group-like element of $\bar{\mathscr{A}}_q(n)^*$ is $\tilde\mk(\pmb{1})=\mk(2\ep_2)$. They are successively presented by
${\rm diag}\{q, q^{-1}, q, q^{-1}\}$ and ${\rm diag}\{q, q, q^{-1}, q^{-1}\}$,
with respect to the basis (\ref{ba}).

  We note that $D(H)$ is unimodular for $H$ finite dimensional, i.e. the distinguished group-like element of $D(H)^*$ is trivial, by part (a) of \cite[Th. 4]{Rad1}, and $us(u)^{-1}=\al\ot g$ is the distinguished group-like element of $D(H)$ by \cite[Cor. 7]{Rad1}. In particular, the distinguished group-like element of $D(\bar{\mathscr{A}})$ is $\mk(2\ep_2)\ot K(\ka)$. We take $\ms{h}=u^{-1}s(u)=\lb\mk(2\ep_2)\ot K(\ka)\rb^{-1}$, which is presented by
${\rm diag}\{q^{-2}, 1, 1, q^2\}$
with respect to the basis (\ref{ba}).

 Since the universal $R$-matrix $\mathcal {R}$ of $D(\bar{\mathscr{A}})$ has the form as (\ref{ur}), we only need to concern those pairs $\lb x^\ga K(\al),~f_{\ga,\al}\rb,~\ga,\al\in\La_+^{\leq\ka}$ with nontrivial actions.  Checking all the matrices above, we find that $R_\pi=\pi\ot\pi(\mathcal {R})$ involves exactly nine pairs with nontrivial actions, which are
\begin{align*}
&\bigg\{\lb K(i_0\ep_1{+}(i_0{-}1)\ep_2),f_{0,i_0\ep_1{+}(i_0{-}1)\ep_2}\rb,~\lb K((i_0{-}1)(\ep_1{+}\ep_2)),f_{0,(i_0{-}1)(\ep_1{+}\ep_2)}\rb,~\lb K(i_0(\ep_1{+}\ep_2)),f_{0,i_0(\ep_1{+}\ep_2)}\rb,\\
&\lb K((i_0{-}1)\ep_1{+}i_0\ep_2),f_{0,(i_0{-}1)\ep_1{+}i_0\ep_2}\rb,~
\lb x_1K((i_0{-}1)(\ep_1{+}\ep_2)),f_{\ep_1,(i_0{-}1)(\ep_1{+}\ep_2)}\rb,\\
&\lb x_1K((i_0{-}1)\ep_1{+}i_0\ep_2),f_{\ep_1,(i_0{-}1)\ep_1{+}i_0\ep_2}\rb,~
\lb x_2K(i_0\ep_1+(i_0{-}1)\ep_2),f_{\ep_2,i_0\ep_1+(i_0{-}1)\ep_2}\rb,\\
&\lb x_2K((i_0{-}1)(\ep_1{+}\ep_2)),f_{\ep_2,(i_0{-}1)(\ep_1{+}\ep_2)}\rb,~
\lb x_1x_2K((i_0{-}1)(\ep_1{+}\ep_2)),f_{\ep_1{+}\ep_2,(i_0{-}1)(\ep_1{+}\ep_2)}\rb\bigg\}.
\end{align*}

And the pairs with nontrivial actions involved in $R_\pi^{-1}=\pi\ot\pi(\mathcal {R}^{-1})$ accordingly are
\begin{align*}
&\bigg\{\lb K((i_0{-}1)\ep_1{+}i_0\ep_2),f_{0,i_0\ep_1{+}(i_0-1)\ep_2}\rb,~\lb K(i_0(\ep_1{+}\ep_2)),f_{0,(i_0{-}1)(\ep_1{+}\ep_2)}\rb,~\lb K((i_0{-}1)(\ep_1{+}\ep_2)),f_{0,i_0(\ep_1{+}\ep_2)}\rb,\\
&\lb K(i_0\ep_1{+}(i_0{-}1)\ep_2),f_{0,(i_0{-}1)\ep_1{+}i_0\ep_2}\rb,~
\lb -x_1K((i_0{-}1)\ep_1{+}i_0\ep_2),f_{\ep_1,(i_0{-}1)(\ep_1{+}\ep_2)}\rb,\\
&\lb -q^{-1}x_1K((i_0{-}1)(\ep_1{+}\ep_2)),f_{\ep_1,(i_0{-}1)\ep_1{+}i_0\ep_2}\rb,~
\lb -x_2K((i_0{-}1)(\ep_1{+}\ep_2)),f_{\ep_2,i_0\ep_1{+}(i_0-1)\ep_2}\rb,\\
&\lb -q^{-1}x_2K(i_0\ep_1{+}(i_0{-}1)\ep_2),f_{\ep_2,(i_0{-}1)(\ep_1{+}\ep_2)}\rb,~
\lb q^{-1}x_1x_2K((i_0{-}1)(\ep_1{+}\ep_2)),f_{\ep_1{+}\ep_2,(i_0{-}1)(\ep_1{+}\ep_2)}\rb\bigg\}.
\end{align*}

Therefore, $u$ is presented by
${\rm diag}\{q^{5/2}, q^{3/2}, q^{3/2},  q^{1/2}\}$
with respect to the basis (\ref{ba}).

Note that $u^2\ms{h}=us(u)$ is a central element of $D(\bar{\mathscr{A}}_q(2))$, presented by $q^3\mi_{\bar{\mathscr{A}}_{(\ell-1)\ep_2,i_0(\ep_1+\ep_2)}}$ with respect to the basis (\ref{ba}). By Thm. \ref{ribb}, when $\ell$ is odd, there exists a unique square root, also the unique ribbon element of $D(\bar{\mathscr{A}}_q(2))$, denoted by $v$. Hence, the twist element $G=u^{-1}v$ is presented by
${\rm diag}\{q^{-1}, 1 , 1 , q\}$
with respect to the basis (\ref{ba}).

Here we write the basis (\ref{ba}) successively as $\{b_1,~b_2,~b_3,~b_4\}$. First compute the tensor product of the matrices of each pairs above with nontrivial actions, and sum them up to give $R_\pi$. Then composing it with the flip map $\tau$, we can see that the braiding $R$-matrix $R_{(\ell-1)\ep_2,i_0(\ep_1+\ep_2)}$ of the braid equation
is presented by the $16\times16$ matrix
\begin{equation}\label{rmat}
\left(
\begin{smallmatrix}{\frac {1}{\sqrt {q}}}&0&0&0&0
&0&0&0&0&0&0&0&0&0&0&0\\ 0&{\frac {1-q}{\sqrt {q}}
}&0&0&\sqrt {q}&0&0&0&0&0&0&0&0&0&0&0\\0&0&{\frac
{1-q}{\sqrt {q}}}&0&0&0&0&0&{\frac {1}{\sqrt {q}}}&0&0&0&0&0&0&0
\\ 0&0&0&{\frac { \left( 1-q \right) ^{2}}{\sqrt {q
}}}&0&0& \left( 1-q \right) \sqrt {q}&0&0& \left( 1-q \right)
\sqrt {q}&0&0&\sqrt {q}&0&0&0\\0&{\frac {1}{\sqrt {
q}}}&0&0&0&0&0&0&0&0&0&0&0&0&0&0\\0&0&0&0&0&{\frac
{1}{\sqrt {q}}}&0&0&0&0&0&0&0&0&0&0\\0&0&0& {\frac
{1-q}{\sqrt {q}}}&0&0&0&0&0&\sqrt {q}&0&0&0&0&0&0
\\ 0&0&0&0&0&0&0& {\frac {1-q}{\sqrt {q}}}&0&0&0&0&0
&\sqrt {q}&0&0\\ 0&0&\sqrt {q}&0&0&0&0&0&0&0&0&0&0&0
&0&0\\ 0&0&0& {\frac {1-q}{\sqrt {q}}}&0&0&\sqrt {q
}&0&0&0&0&0&0&0&0&0\\0&0&0&0&0&0&0&0&0&0&{\frac {1}
{\sqrt {q}}}&0&0&0&0&0\\ 0&0&0&0&0&0&0&0&0&0&0& {
\frac {1-q}{\sqrt {q}}}&0&0&{\frac {1}{\sqrt {q}}}&0
\\0&0&0&\sqrt {q}&0&0&0&0&0&0&0&0&0&0&0&0
\\ 0&0&0&0&0&0&0&{\frac {1}{\sqrt {q}}}&0&0&0&0&0&0&0
&0\\ 0&0&0&0&0&0&0&0&0&0&0&\sqrt {q}&0&0&0&0
\\ 0&0&0&0&0&0&0&0&0&0&0&0&0&0&0&{\frac {1}{\sqrt {q
}}}\end{smallmatrix} \right)
\end{equation}
with the inverse
\begin{equation}\label{irmat}
\left(
\begin{smallmatrix}\sqrt{q}&0&0&0&0
&0&0&0&0&0&0&0&0&0&0&0\\
0&0&0&0&\sqrt {q}&0&0&0&0&0&0&0&0&0&0&0\\
0&0&0&0&0&0&0&0&{\frac {1}{\sqrt {q}}}&0&0&0&0&0&0&0
\\ 0&0&0&0&0&0& 0&0&0&0&0&0&{\frac {1}{\sqrt {
q}}}&0&0&0\\0&{\frac {1}{\sqrt {
q}}}&0&0&{-\frac{1-q}{\sqrt {q
}}}&0&0&0&0&0&0&0&0&0&0&0\\0&0&0&0&0&\sqrt {q}&0&0&0&0&0&0&0&0&0&0\\0&0&0&0&0&0&0&0&0&{\frac {1}{\sqrt {
q}}}&0&0&{-\frac{1-q}{q\sqrt {q
}}}&0&0&0
\\ 0&0&0&0&0&0&0&0&0&0&0&0&0
&\sqrt {q}&0&0\\ 0&0&\sqrt {q}&0&0&0&0&0&{-\frac{1-q}{\sqrt {q
}}}&0&0&0&0&0
&0&0\\ 0&0&0&0&0&0&{\frac {1}
{\sqrt {q}}}&0&0&0&0&0&{-\frac{1-q}{q\sqrt {q
}}}&0&0&0\\0&0&0&0&0&0&0&0&0&0&\sqrt {q}&0&0&0&0&0\\ 0&0&0&0&0&0&0&0&0&0&0&
0&0&0&{\frac {1}{\sqrt {q}}}&0
\\0&0&0&{\frac {1}{\sqrt {q}}}&0&0&{-\frac {1-q}{\sqrt{q}}}&0&0&{-\frac{1-q}{\sqrt {q
}}}&0&0&{\frac{(1-q)^2}{q\sqrt {q
}}}&0&0&0
\\ 0&0&0&0&0&0&0&{\frac {1}{\sqrt {q}}}&0&0&0&0&0&{-\frac {1-q}{\sqrt{q}}}&0
&0\\ 0&0&0&0&0&0&0&0&0&0&0&\sqrt {q}&0&0&{-\frac {1-q}{\sqrt{q}}}&0
\\ 0&0&0&0&0&0&0&0&0&0&0&0&0&0&0&\sqrt{q}
\end{smallmatrix} \right)
\end{equation}
with respect to the basis
\[
\{b_1\ot b_1,~b_1\ot b_2,~b_1\ot b_3,~b_1\ot b_4,~\dots,
~b_4\ot b_1,~b_4\ot b_2,~b_4\ot b_3,~b_4\ot b_4\}.
\]

Note that the braiding $R$-matrix above is not symmetric which is different from those basic braiding $R$-matrices
arising from the quantum groups of type $BCD$ over the tensor square of vector representation (in the latter case, they are symmetric, see \cite{HH1}, \cite{HH2}).

When $\ell$ is large enough, we have the following irreducible decomposition of tensor square,
\[\bar{\mathscr{A}}_{(\ell-1)\ep_2,i_0(\ep_1+\ep_2)}^{\ot 2}\cong \bar{\mathscr{A}}_{(\ell-2)\ep_2,\ep_1+\ep_2}\oplus\bar{\mathscr{A}}_{\ep_1+(\ell-1)\ep_2,\ep_2}\oplus
\bar{\mathscr{A}}_{(\ell-1)(\ep_1+\ep_2),\ep_1}\oplus\bar{\mathscr{A}}_{0,0},\]
where $\bar{\mathscr{A}}_{0,0}$ is the one-dimensional trivial representation spanned by
\begin{equation}\label{trep}
v_{0,0}:=
q\, b_1\ot b_4 - b_2\ot b_3 - b_3\ot b_2 + b_4\ot b_1.
\end{equation}
By Schur's lemma, $R_{(\ell-1)\ep_2,i_0(\ep_1+\ep_2)}$ acts by multiplying scalars $q^{-1/2},-q^{1/2},-q^{1/2},q^{3/2}$ on these four simple submodules successively
, with the minimal polynomial
\begin{equation}\label{poly2}
\ms{minpoly}(R)=\lb x-q^{-1/2}\rb\lb x+q^{1/2}\rb\lb x-q^{3/2}\rb.
\end{equation}

Using the spectral decomposition of the braiding matrix together with its inverse, we have
\begin{align}
\nonumber R-R^{-1}&=(q^{-1/2}-q^{1/2})\mi_{\bar{\mathscr{A}}_{(\ell-2)\ep_2,\ep_1+\ep_2}\oplus\bar{\mathscr{A}}_{\ep_1+(\ell-1)\ep_2,\ep_2}\oplus
\bar{\mathscr{A}}_{(\ell-1)(\ep_1+\ep_2),\ep_1}}+(q^{3/2}-q^{-3/2})\mi_{\bar{\mathscr{A}}_{0,0}}\\
\label{skr}& =(q^{-1/2}-q^{1/2})\lb\mi_{\bar{\mathscr{A}}_{(\ell-1)\ep_2,i_0(\ep_1+\ep_2)}^{\ot 2}}-([3]_{q^{1/2}}+1)\mi_{\bar{\mathscr{A}}_{0,0}}\rb\\
&\nonumber
=(q^{-1/2}-q^{1/2})\lb\mi_{\bar{\mathscr{A}}_{(\ell-1)\ep_2,i_0(\ep_1+\ep_2)}^{\ot 2}}-(q^{-1}+2+q)\mi_{\bar{\mathscr{A}}_{0,0}}\rb,
\end{align}
where identity maps are interpreted as the corresponding projections of irreducible submodules. Accordingly, by \eqref{rmat} and \eqref{irmat} the quasi-projection \[(q^{-1}+2+q)\mi_{\bar{\mathscr{A}}_{0,0}}=
\mi_{\bar{\mathscr{A}}_{(\ell-1)\ep_2,i_0(\ep_1+\ep_2)}^{\ot 2}}-(q^{-1/2}-q^{1/2})^{-1}(R-R^{-1})\]
is presented by
\begin{equation}\label{qide}
\left(
\begin{smallmatrix}0&0&0&0&0
&0&0&0&0&0&0&0&0&0&0&0\\
0&0&0&0&0&0&0&0&0&0&0&0&0&0&0&0\\
0&0&0&0&0&0&0&0&0&0&0&0&0&0&0&0
\\ 0&0&0&q&0&0&-q&0&0&-q&0&0&1&0&0&0\\0&0&0&0&0&0&0&0&0&0&0&0&0&0&0&0\\
0&0&0&0&0&0&0&0&0&0&0&0&0&0&0&0\\0&0&0&-1&0&0&1&0&0&1&0&0&{-q^{-1}}&0&0&0
\\ 0&0&0&0&0&0&0&0&0&0&0&0&0
&0&0&0\\ 0&0&0&0&0&0&0&0&0&0&0&0&0&0
&0&0\\ 0&0&0&-1&0&0&1&0&0&1&0&0&{-q^{-1}}&0&0&0\\
0&0&0&0&0&0&0&0&0&0&0&0&0&0&0&0\\ 0&0&0&0&0&0&0&0&0&0&0&
0&0&0&0&0
\\0&0&0&1&0&0&-1&0&0&-1&0&0&q^{-1}&0&0&0
\\ 0&0&0&0&0&0&0&0&0&0&0&0&0&0&0
&0\\ 0&0&0&0&0&0&0&0&0&0&0&0&0&0&0&0
\\ 0&0&0&0&0&0&0&0&0&0&0&0&0&0&0&0
\end{smallmatrix} \right)
\end{equation}
with respect to the basis
\[
\{b_1\ot b_1,~b_1\ot b_2,~b_1\ot b_3,~b_1\ot b_4,~\dots,
~b_4\ot b_1,~b_4\ot b_2,~b_4\ot b_3,~b_4\ot b_4\}.
\]

The cubic minimal polynomial (\ref{poly2}) of $R_{(\ell-1)\ep_2,i_0(\ep_1+\ep_2)}$ reminds us about the famous Birman-Murakami-Wenzl algebra; e.g. see \cite{BW}, \cite{W} or \cite[8.6.4]{KS}.
\begin{definition}\label{bmw}
The Birman-Murakami-Wenzl algebra $\mathscr{B}_r(s,t)$ is an associative $k$-algebra   with 1, whose
 generating set is $\{T_i\,|\,1\leq i\leq r-1\}$ , and satisfies the relations
\[
\begin{array}{l}
\ms{\upshape (a)}\quad (T_i-s)(T_i+s^{-1})(T_i-t^{-1})=0, \quad 1\leq i\leq r-1,\\
\ms{\upshape (b)}\quad  T_iT_{i+1}T_i=T_{i+1}T_iT_{i+1}, \quad 1\leq i\leq r-1,\\
\ms{\upshape (c)}\quad  T_iT_j=T_jT_i,\quad |i-j|>1,\\
\ms{\upshape (d)}\quad  E_iT_j^{\pm1} E_i=t^{\pm1} E_i, \quad 1\leq i\leq r-1, \ j=i\pm 1,\\
\ms{\upshape (e)}\quad  E_iT_i=T_iE_i=t^{-1}E_i, \quad 1\leq i\leq r-1,
\end{array}
\]
where the quasi-idempotent $E_i=1-\om^{-1}\lb T_i-T_i^{-1}\rb,\ 1\leq i\leq k-1$, $\om=s-s^{-1}$.
\end{definition}
In fact,
relation {\sffamily (e)} is due to relation {\sffamily (a)}, and it implies that
\begin{equation}\label{idt1}
E_i^2=E_i\lb1-\om^{-1}\lb T_i-T_i^{-1}\rb\rb=(1+\om^{-1}\te)E_i,~1\leq i\leq r-1\text{ with }\te=t-t^{-1}.
\end{equation}
By relation {\sffamily (d)} and \eqref{idt1}, one can also see that
\begin{equation}\label{idt2}
E_iE_jE_i=E_i\lb1-\om^{-1}\lb T_j-T_j^{-1}\rb\rb E_i=E_i^2-\om^{-1}\te E_i=E_i
\end{equation}
for $1\leq i\leq r-1,j=i\pm 1$. Also, relation {\sffamily (c)} implies that
\begin{equation}\label{idt3}
E_iE_j=E_jE_i,\,|i-j|>1.
\end{equation}
Therefore, $\{E_i\,|\,1\leq i\leq r-1\}$ generates a subalgebra isomorphic to the Temperley-Lieb algebra $\mathscr{T}_r(1+\om^{-1}\te)$.

Let us consider the endomorphism algebra \[\mathscr{E}_r=\ms{End}_{D(\bar{\mathscr{A}}_q(2))}\lb\bar{\mathscr{A}}_{(\ell-1)\ep_2,i_0(\ep_1+\ep_2)}^{\ot r}\rb,\]
and the specialization
\[\mathscr{B}_r(q):=\mathscr{B}_r\lb q^{-1/2},q^{-3/2}\rb.\]
Take
\[R_i:=\mi_{\bar{\mathscr{A}}_{(\ell-1)\ep_2,i_0(\ep_1+\ep_2)}}^{\ot(i-1)}\ot R_{(\ell-1)\ep_2,i_0(\ep_1+\ep_2)}\ot\mbox{Id}_{\bar{\mathscr{A}}_{(\ell-1)\ep_2,i_0(\ep_1+\ep_2)}}^{\ot(r-i-1)},~1\leq i\leq r-1.\]
Then we obtain that
\begin{theorem}\label{end}
There exists an algebra homomorphism
\begin{equation}
\Phi :\mathscr{B}_r(q)\longrightarrow \mathscr{E}_r, \quad T_i\longmapsto R_i, \ 1\leq i\leq r-1,
\end{equation}
for $r\ge 2$.
Especially, \[e_i:=\Phi(E_i)=\mbox{Id}_{\bar{\mathscr{A}}_{(\ell-1)\ep_2,i_0(\ep_1+\ep_2)}}^{\ot r}-\lb q^{-1/2}-q^{1/2}\rb^{-1}\lb R_i-R_i^{-1}\rb.
\]
\end{theorem}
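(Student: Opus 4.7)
The plan is to verify each of the five defining relations \textsf{(a)}--\textsf{(e)} of $\mathscr{B}_r(q)=\mathscr{B}_r(q^{-1/2},q^{-3/2})$ under the assignment $T_i\mapsto R_i$. Each $R_i$ automatically lies in $\mathscr{E}_r$ because, by the quasitriangular identity \eqref{qc}, the braiding $R_{(\ell-1)\ep_2,i_0(\ep_1+\ep_2)}=\tau\circ(\pi\ot\pi)(\mathcal{R})$ is a $D(\bar{\mathscr{A}})$-module endomorphism of $V^{\ot 2}$, where $V:=\bar{\mathscr{A}}_{(\ell-1)\ep_2,i_0(\ep_1+\ep_2)}$.

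Relations \textsf{(a)}, \textsf{(b)}, \textsf{(c)} are essentially immediate. Setting $s=q^{-1/2}$ and $t^{-1}=q^{3/2}$, relation \textsf{(a)} is precisely the minimal polynomial identity $(R_i-q^{-1/2})(R_i+q^{1/2})(R_i-q^{3/2})=0$ recorded in \eqref{poly2}. Relation \textsf{(b)}, the braid identity for the $R_i$'s, is the standard consequence of the quantum Yang-Baxter equation satisfied by $\mathcal{R}$, while relation \textsf{(c)} is automatic from tensor-factor locality.

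Relation \textsf{(e)} follows from the spectral description of $e_i$: the computation \eqref{skr}, together with the explicit form \eqref{qide}, shows that on the adjacent slots $(i,i+1)$ one has $e_i=(q^{-1}+2+q)P_i$, where $P_i$ is the projector onto the one-dimensional trivial submodule $\bar{\mathscr{A}}_{0,0}\subset V^{\ot 2}$ spanned by $v_{0,0}$. Since $R_i$ acts as the scalar $q^{3/2}$ on $\bar{\mathscr{A}}_{0,0}$, we conclude $R_ie_i=e_iR_i=q^{3/2}e_i=t^{-1}e_i$.

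The main obstacle is relation \textsf{(d)}, which requires $e_iR_{i\pm1}^{\pm1}e_i=q^{\mp3/2}e_i$. By the locality in \textsf{(c)} it suffices to verify the case $j=i+1$ inside $V^{\ot 3}$, the case $j=i-1$ then following from the braid relation \textsf{(b)}. Since $P_i$ is rank one, we may factor $e_i=v_{0,0}\ot\phi_{0,0}$ on slots $(i,i+1)$, where $\phi_{0,0}\in(V\ot V)^*$ is the unique $D(\bar{\mathscr{A}})$-invariant functional normalized by $\phi_{0,0}(v_{0,0})=q^{-1}+2+q$; its values on the basis of $V\ot V$ can be read off directly from the non-zero columns of \eqref{qide}. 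Unwinding the composition $e_iR_{i+1}e_i$ on $V^{\ot 3}$ reduces \textsf{(d)} to the statement that the linear map
\[
F\colon V\longrightarrow V,\qquad F(w)=(\phi_{0,0}\ot\ms{id}_V)\bigl((\ms{id}_V\ot R_{(\ell-1)\ep_2,i_0(\ep_1+\ep_2)})(v_{0,0}\ot w)\bigr),
\]
equals $q^{-3/2}\ms{id}_V$, for then $e_iR_{i+1}e_i=q^{-3/2}e_i$. Because $v_{0,0}$ and $\phi_{0,0}$ are $D(\bar{\mathscr{A}})$-invariants and $R$ intertwines the action, $F$ is a $D(\bar{\mathscr{A}})$-endomorphism of the simple module $V$; Schur's lemma then guarantees that $F$ is a scalar, whose value is determined by evaluating on a single basis vector. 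Taking $w=b_1$ and using the entries of $R$ listed in \eqref{rmat}, a short computation yields $F(b_1)=q^{-3/2}b_1$, which gives $F=q^{-3/2}\ms{id}_V$ and hence the desired $e_iR_{i+1}e_i=q^{-3/2}e_i$. The dual identity $e_iR_{i+1}^{-1}e_i=q^{3/2}e_i$ follows from the identical argument with \eqref{irmat} replacing \eqref{rmat}. Having verified all five BMW relations, the assignment $T_i\mapsto R_i$ extends uniquely to the algebra homomorphism $\Phi$.
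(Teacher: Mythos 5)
Your proof is correct and follows the paper's overall strategy --- verify the five defining relations of $\mathscr{B}_r(q)$ for the operators $R_i$, with \textsf{(a)}, \textsf{(b)}, \textsf{(c)}, \textsf{(e)} coming for free from \eqref{poly2}, the Yang--Baxter equation, tensor-factor locality and the spectral decomposition \eqref{skr} --- but it handles the one substantive relation \textsf{(d)} by a genuinely different and more economical route. The paper verifies \textsf{(d)} by brute force, checking $e_1R_2^{\pm1}(v_{0,0}\ot b_i)=q^{\mp3/2}(v_{0,0}\ot b_i)$ and, separately, $e_2R_1^{\pm1}(b_i\ot v_{0,0})=q^{\mp3/2}(b_i\ot v_{0,0})$ for all four basis vectors using \eqref{rmat}, \eqref{irmat} and \eqref{qide}. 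Your Schur's-lemma reduction replaces the first batch by a single evaluation, and I confirm it: the only nonvanishing term in $F(b_1)$ comes from the $b_4\ot b_1$ component of $v_{0,0}$, giving $q^{-1/2}\cdot\phi_{0,0}(b_4\ot b_1)=q^{-1/2}\cdot q^{-1}=q^{-3/2}$, so indeed $F=q^{-3/2}\,\ms{id}_V=t\,\ms{id}_V$. The one place you are too terse is the assertion that the case $j=i-1$ ``follows from the braid relation'': this is true but not obvious (the paper evidently did not believe it, since it recomputes that case from scratch), and it deserves a line of proof. Since relation \textsf{(a)} makes $e_i$ a polynomial in $R_i$, the braid relation gives $\lb R_iR_{i+1}\rb e_i\lb R_iR_{i+1}\rb^{-1}=e_{i+1}$ and $\lb R_iR_{i+1}\rb^{-1}R_i^{\pm1}\lb R_iR_{i+1}\rb=R_iR_{i+1}^{\pm1}R_i^{-1}$, whence
\[
e_{i+1}R_i^{\pm1}e_{i+1}=\lb R_iR_{i+1}\rb\, e_i\lb R_iR_{i+1}^{\pm1}R_i^{-1}\rb e_i\,\lb R_iR_{i+1}\rb^{-1}
=\lb R_iR_{i+1}\rb\, e_iR_{i+1}^{\pm1}e_i\,\lb R_iR_{i+1}\rb^{-1}=t^{\pm1}e_{i+1},
\]
the middle equality absorbing the factors $R_i$ and $R_i^{-1}$ via $e_iR_i=R_ie_i=t^{-1}e_i$ from relation \textsf{(e)}. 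With that line added, your argument is complete, and arguably cleaner than the paper's: Schur's lemma explains \emph{why} $e_iR_{i+1}^{\pm1}e_i$ must be a scalar multiple of $e_i$, whereas the paper's sixteen matrix checks only confirm that it is.
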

\begin{proof}
Indeed, as $R$ is a braiding $R$-matrix satisfying \eqref{poly2} and \eqref{skr}, we know that $\Phi$ can factor through relations {\sffamily (a)}--{\sffamily (c)} and {\sffamily (e)}. In particular, the $2$-fold tensor $v_{0,0}$ in \eqref{trep} is an eigenvector of $R$ with eigenvalue $q^{3/2}$. On the other hand, using \eqref{rmat}, \eqref{irmat} and \eqref{qide} we check that
\begin{align*}
&e_1(R_2)^{\pm1}(v_{0,0}\ot b_i)=q^{\mp3/2}(v_{0,0}\ot b_i),\\
&e_2(R_1)^{\pm1}(b_i\ot v_{0,0})=q^{\mp3/2}(b_i\ot v_{0,0})
\end{align*}
for $1\leq i\leq 4$.
That means $\Phi$ also factors through relation {\sffamily (d)}, so it is well-defined.
\end{proof}

\begin{proposition}\label{subalg}
The endomorphism algebra $\mathscr{E}_r$ has a quotient of $\mathscr{B}_r(q)$ as its proper subalgebra.
\end{proposition}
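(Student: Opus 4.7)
The plan is to establish the proper inclusion first at $r=2$ by a direct dimension count, and then propagate it to general $r\geq 2$ via a tensoring-with-identity embedding.

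At $r=2$, recall from the discussion preceding Theorem \ref{end} that $\bar{\mathscr{A}}_{(\ell-1)\ep_2,i_0(\ep_1+\ep_2)}^{\ot 2}$ decomposes into the four simple $D(\bar{\mathscr{A}})$-submodules $\bar{\mathscr{A}}_{(\ell-2)\ep_2,\ep_1+\ep_2}$, $\bar{\mathscr{A}}_{\ep_1+(\ell-1)\ep_2,\ep_2}$, $\bar{\mathscr{A}}_{(\ell-1)(\ep_1+\ep_2),\ep_1}$, $\bar{\mathscr{A}}_{0,0}$, which are pairwise non-isomorphic by Theorem \ref{dmod} (their parameter pairs differ). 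Schur's lemma then forces $\mathscr{E}_2\cong k^{\oplus 4}$, of dimension four. On the other hand $\mathscr{B}_2(q)$ is generated by $T_1$, so $\Phi(\mathscr{B}_2(q))=k[R_1]$, and by \eqref{poly2} the operator $R_1$ has exactly three distinct eigenvalues $q^{-1/2}$, $-q^{1/2}$, $q^{3/2}$; hence $\Phi(\mathscr{B}_2(q))$ is three-dimensional. The explicit obstruction is the minimal idempotent of $\mathscr{E}_2$ projecting onto $\bar{\mathscr{A}}_{\ep_1+(\ell-1)\ep_2,\ep_2}$ alone: every polynomial in $R_1$ assigns the same scalar to both $(-q^{1/2})$-eigenspaces $\bar{\mathscr{A}}_{\ep_1+(\ell-1)\ep_2,\ep_2}$ and $\bar{\mathscr{A}}_{(\ell-1)(\ep_1+\ep_2),\ep_1}$, so this projector lies in $\mathscr{E}_2\setminus\Phi(\mathscr{B}_2(q))$.

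For general $r\geq 2$, I will invoke the canonical algebra embedding $\iota_r\colon\mathscr{E}_2\hookrightarrow\mathscr{E}_r$ given by $\phi\mapsto\phi\ot\mi_{V^{\ot(r-2)}}$, where $V:=\bar{\mathscr{A}}_{(\ell-1)\ep_2,i_0(\ep_1+\ep_2)}$. Since $\iota_r(R_1)=R_1$ in $\mathscr{E}_r$, one has $\iota_r(\Phi(\mathscr{B}_2(q)))\subseteq\Phi(\mathscr{B}_r(q))$. The plan is to show that the lifted projector $\iota_r(p)$ still lies in $\mathscr{E}_r\setminus\Phi(\mathscr{B}_r(q))$, by evaluating any hypothetical polynomial expression for $\iota_r(p)$ in $R_1,\dots,R_{r-1}$ on test vectors of the form $w\ot v_0^{\ot(r-2)}$, with $w\in V^{\ot 2}$ and $v_0\in V$ a carefully chosen weight vector (for instance annihilated by the nilpotent $x_i$'s), so that the contributions from $R_i$ with $i\geq 2$ collapse to scalars and the expression reduces to a polynomial in $R_1$, contradicting the $r=2$ obstruction.

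The main obstacle will be this last reduction: it hinges on a careful bookkeeping of how the higher braiding generators $R_i$ $(i\geq 2)$ act on the embedded subalgebra $\iota_r(\mathscr{E}_2)$ inside $\mathscr{E}_r$. An alternative, possibly cleaner route is a Bratteli-diagram comparison using the ``dimension-square of Temperley-Lieb'' structure of $\mathscr{E}_r$ mentioned before Theorem \ref{end}: one would count the simple blocks of $\mathscr{E}_r$ and show they outnumber those achievable by any quotient of $\mathscr{B}_r(q)$, giving the proper containment uniformly for all $r\geq 2$ without the need for test-vector arguments.
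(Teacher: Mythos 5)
Your $r=2$ analysis is correct and even sharper than needed: the four summands of $\bar{\mathscr{A}}_{(\ell-1)\ep_2,i_0(\ep_1+\ep_2)}^{\ot 2}$ are pairwise non-isomorphic (distinct parameter pairs under the bijection behind Theorem \ref{dmod}), so $\mathscr{E}_2\cong k^{\oplus 4}$, while $\Phi(\mathscr{B}_2(q))=k[R_1]$ is $3$-dimensional by \eqref{poly2}; the projector separating the two $(-q^{1/2})$-blocks is a genuine obstruction. The gap is in the propagation to $r\geq 3$. The proposed reduction --- evaluating a word in $R_1,\dots,R_{r-1}$ on $w\ot v_0^{\ot(r-2)}$ and expecting the $R_i$ with $i\geq 2$ to ``collapse to scalars'' --- does not work: $R_2$ acts on the pair consisting of the \emph{second factor of $w$} and $v_0$, not on $v_0\ot v_0$, so even for $v_0=b_1$ (where $R(b_1\ot b_1)=q^{-1/2}b_1\ot b_1$) a word in the generators does not preserve the subspace $V^{\ot2}\ot v_0^{\ot(r-2)}$ and does not reduce to a polynomial in $R_1$. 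Making this idea rigorous would require a conditional expectation (Markov/partial trace) from $\mathscr{E}_r$ onto $\iota_r(\mathscr{E}_2)$ carrying $\Phi(\mathscr{B}_r(q))$ into $\iota_r(\Phi(\mathscr{B}_2(q)))$, none of which you construct; as written the general case is only a plan with an acknowledged unresolved obstacle.

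The paper's actual proof is essentially the route you mention only in your closing sentence, and it is a one-line dimension count rather than a Bratteli comparison: by Theorem \ref{twist} and the Majid--Oeckl equivalence of Yetter--Drinfel'd categories under a Hopf $2$-cocycle twist, $D(\bar{\mathscr{A}}_q(2))$-mod is braided equivalent to $u_q(sl_2)^{\ot 2}$-mod with $\bar{\mathscr{A}}_{(\ell-1)\ep_2,i_0(\ep_1+\ep_2)}$ corresponding to $V_q(1)^{\ot 2}$, whence
\[
\ms{dim}\lb\mathscr{E}_r\rb=\lb\tfrac{1}{r+1}\tbinom{2r}{r}\rb^{2}>(2r-1)!!=\ms{dim}\lb\mathscr{B}_r(q)\rb\geq\ms{dim}\lb\ms{Im}(\Phi)\rb
\]
for all $r\geq 2$, so $\ms{Im}(\Phi)\subsetneq\mathscr{E}_r$ uniformly. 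You should either carry out this count (which subsumes your $r=2$ computation) or supply the missing conditional-expectation argument; as it stands only the case $r=2$ is proved.
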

\begin{proof}
According to \cite[Theorem~2.7]{MO}, for a Hopf algebra $H$ with a Hopf 2-cocycle $\si$,  ${_H\mathcal {Y}\mathcal {D}^H}$ is equivalent to  ${_{H^\si}\mathcal {Y}\mathcal {D}^{H^\si}}$ as braided monoidal categories. Hence, by Theorem~\ref{twist} $D(\bar{\mathscr{A}}_q(2))$-mod is equivalent to $D(\bar{\mathscr{A}}_q(1))^{\ot 2}$-mod, same as $u_q(sl_2)^{\ot2}$-mod, and $\bar{\mathscr{A}}_{(\ell-1)\ep_2,i_0(\ep_1+\ep_2)}$ actually corresponds to the tensor square of fundamental representations, $V_q(1)^{\ot2}$.
As a result, $\ms{dim}\lb\mathscr{E}_r\rb=\lb\tfrac{1}{r+1}{2r\choose r}\rb^2$, which is equal to \[\ms{dim}\lb\ms{End}_{u_q(sl_2)^{\ot2}}\lb \lb V_q(1)^{\ot2}\rb^{\ot r}\rb\rb=\lb\ms{dim}\lb\ms{End}_{u_q(sl_2)}\lb V_q(1)^{\ot r}\rb\rb\rb^2,\]
namely the dimension of tensor square of the Temperley-Lieb algebra.
As $\ms{dim}\lb\mathscr{B}_r(q)\rb=(2r-1)!!<\ms{dim}\lb\mathscr{E}_r\rb, r\ge 2$, $\Phi$ is not surjective and $\ms{Im}(\Phi)\subsetneq \mathscr{E}_r$.
\end{proof}

\begin{remark}
As we have seen, our endomorphism algebra $\mathscr{E}_r$ on $\bar{\mathscr{A}}_{(\ell-1)\ep_2,i_0(\ep_1+\ep_2)}^{\ot r}$ is distinct to the BMW algebra arising from the fundamental representation of the BCD type quantum group. But in Theorem \ref{sqj} we will see that it induces the same skein relation as that of the so-called Dubrovnik invariant.
\end{remark}

\begin{remark}
By computer program executing the algorithm of Kauffmann and Radford, we list evaluations of $\overline{\mb{Inv}}_{D(\bar{\mathscr{A}}),\bar{\mathscr{A}}_{0,\ep_2}}$ on the Hopf link and some small knots as follows with normalization $\overline{\mb{Inv}}(\mb{O})=1$. The reader can compare and refer to \cite[Appendix A, D]{Cro} for the information of these small knots and their Jones polynomials.

\smallskip
 \centerline{
\begin{tabular}{|c|c|}
\hline $\mb{K}$ &
$\overline{\mb{Inv}}_{D(\bar{\mathscr{A}}),\bar{\mathscr{A}}_{0,\ep_2}}$\\
\hline
 $2_1^2(L)$  & $q^{-5}+2q^{-3}+q^{-1}$\\
\hline
 $2_1^2(R)$  & $q+2q^3+q^5$\\
\hline
 $3_1(L)$  & $q^{-8}-2q^{-7}+q^{-6}-2q^{-5}+2q^{-4}+q^{-2}$\\
\hline
 $3_1(R)$  & $q^2+2q^4-2q^5+q^6-2q^7+q^8$\\
\hline
 $4_1(A)$  & $q^{-4}-2q^{-3}+3q^{-2}-4q^{-1}+5-4q+3q^2-2q^3+q^4$\\
\hline
 $5_1(L)$ & $q^{-14}-2q^{-13}+3q^{-12}-4q^{-11}+3q^{-10}-4q^{-9}+3q^{-8}-2q^{-7}+2q^{-6}+q^{-4}$\\
\hline
 $5_1(R)$ &  $q^4+2q^6-2q^7+3q^8-4q^9+3q^{10}-4q^{11}+3q^{12}-2q^{13}+q^{14}$\\
\hline
 $5_2(L)$ &  $q^{-12}-2q^{-11}+3q^{-10}-6q^{-9}+7q^{-8}-8q^{-7}+8q^{-6}-6q^{-5}+5q^{-4}-2q^{-3}+q^{-2}$\\
\hline
 $5_2(R)$ &  $q^2-2q^3+5q^4-6q^5+8q^6-8q^7+7q^8-6q^9+3q^{10}-2q^{11}+q^{12}$\\
\hline
\end{tabular}
}

\end{remark}

The observation at the list above leads us to conclude that
\begin{theorem}\label{sqj}
The following relation of ambient isotopy invariants holds,
\begin{equation}\label{squ}
\overline{\mb{Inv}}_{D(\bar{\mathscr{A}}_2(q)),\bar{\mathscr{A}}_{(\ell-1)\ep_2,i_0(\ep_1+\ep_2)}}
=\lb\overline{\mb{Inv}}_{D(\bar{\mathscr{A}}_1(q)),\bar{\mathscr{A}}_{(i_0-1)\ep_1,i_0\ep_1}}\rb^2.
\end{equation}
\end{theorem}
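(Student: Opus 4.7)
The plan is to derive the identity \eqref{squ} from two ingredients: the cocycle-twist equivalence of Theorem \ref{twist}, and multiplicativity of Kauffman--Radford's invariant on tensor products of ribbon Hopf algebras. Combining Theorem \ref{twist} with the Majid--Oeckl theorem on cocycle twists of Yetter--Drinfel'd categories (already invoked in the proof of Proposition \ref{subalg}), one obtains a braided monoidal equivalence
\[
D(\bar{\mathscr{A}}_q(2))\text{-mod}\,\simeq\,{_{\bar{\mathscr{A}}_q(2)}}\mathcal{YD}^{\bar{\mathscr{A}}_q(2)}\,\simeq\,{_{\bar{\mathscr{A}}_q(1)^{\otimes 2}}}\mathcal{YD}^{\bar{\mathscr{A}}_q(1)^{\otimes 2}}\,\simeq\,D(\bar{\mathscr{A}}_q(1))\text{-mod}\,\boxtimes\,D(\bar{\mathscr{A}}_q(1))\text{-mod}.
\]
Under this chain, the first task is to verify that the self-dual simple module $\bar{\mathscr{A}}_{(\ell-1)\ep_2,i_0(\ep_1+\ep_2)}$ is identified with $V\boxtimes V$, where $V=\bar{\mathscr{A}}_{(i_0-1)\ep_1,i_0\ep_1}$; both modules are four-dimensional, and the identification is pinned down by matching simultaneous eigenvalues of $K(\ep_1),K(\ep_2),\mk(\ep_1),\mk(\ep_2)$ on the basis \eqref{ba} with the corresponding eigenvalue data on $V\boxtimes V$ read off from Example \ref{ex1}. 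As a consistency check, the braiding eigenvalues $q^{-1/2},-q^{1/2},q^{3/2}$ appearing in \eqref{poly2} are precisely the products of pairs of eigenvalues $q^{-1/4},-q^{3/4}$ from \eqref{poly1}.

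The second ingredient is a multiplicativity principle for the Kauffman--Radford invariant. Given ribbon Hopf algebras $(H_i,R_i,v_i)$ with finite-dimensional modules $M_i$ ($i=1,2$), endow $H_1\otimes H_2$ with the canonical tensor ribbon structure, $R=(R_1)_{13}(R_2)_{24}$ (after leg reshuffle), $u=u_1\otimes u_2$, $v=v_1\otimes v_2$; then $\chi_{M_1\otimes M_2}=\chi_{M_1}\chi_{M_2}$, and the bead-sliding algorithm of Section 4 applied to any oriented link $L$ splits across the two tensor factors, so that
\[
\overline{\mb{Inv}}_{H_1\otimes H_2,\,M_1\otimes M_2}(L)=\overline{\mb{Inv}}_{H_1,M_1}(L)\cdot\overline{\mb{Inv}}_{H_2,M_2}(L),
\]
with writhe-normalizing scalars likewise multiplying as $c=c_1c_2$. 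Specializing to $H_1=H_2=D(\bar{\mathscr{A}}_q(1))$ and $M_1=M_2=V$ gives precisely the right-hand side of \eqref{squ}.

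The chief obstacle I anticipate is upgrading the above braided monoidal equivalence to a genuine ribbon equivalence, which is what the ambient-isotopy statement (as opposed to merely regular-isotopy) requires. Concretely, one must verify that the ribbon element \eqref{rib} of $D(\bar{\mathscr{A}}_q(2))$ is transported by the twist to $v_1\otimes v_2\in D(\bar{\mathscr{A}}_q(1))^{\otimes 2}$, or equivalently that the scalar $c$ of Proposition \ref{wnor} for $\bar{\mathscr{A}}_{(\ell-1)\ep_2,i_0(\ep_1+\ep_2)}$ equals the square $c_V^2$ of the scalar for $V$ computed in Example \ref{ex1}. Example \ref{ex1} gives $c_V=q^{3/4}$, while the spectral decomposition recorded before \eqref{skr} shows that $v$ acts by $q^{3/2}=(q^{3/4})^2$ on $\bar{\mathscr{A}}_{(\ell-1)\ep_2,i_0(\ep_1+\ep_2)}$; once this ribbon compatibility is secured, the two steps above combine to yield \eqref{squ}. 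Iterating the twist argument for arbitrary $n$ then produces the $n$-fold factorization claimed in Corollary \ref{nrank}.
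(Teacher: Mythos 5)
Your proposal is correct in outline but takes a genuinely different route from the paper. The paper argues skein-theoretically: using the spectral decomposition \eqref{skr} and the quasi-idempotent of Theorem \ref{end}, it shows that $\mb{Inv}_{D(\bar{\mathscr{A}}_2(q)),\bar{\mathscr{A}}_{(\ell-1)\ep_2,i_0(\ep_1+\ep_2)}}$ satisfies the Dubrovnik skein relation, identifies the normalized invariant with $\mbox{Dubrovnik}(L)(q^{-3/2},q^{-1/2}-q^{1/2})$, and then invokes the classical identity $(V_L(t))^2=\mbox{Dubrovnik}(L)(t^{-3/2},t^{-1/2}-t^{1/2})$ from Lickorish. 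You instead factor the whole construction through the cocycle-twist equivalence of Theorem \ref{twist} plus Majid--Oeckl, identify $\bar{\mathscr{A}}_{(\ell-1)\ep_2,i_0(\ep_1+\ep_2)}$ with $V\boxtimes V$ (which the paper itself asserts in the proof of Proposition \ref{subalg}), and use multiplicativity of the Kauffman--Radford invariant over tensor products of ribbon Hopf algebras. Your approach buys more: it bypasses the BMW/Dubrovnik machinery entirely and, by iteration, yields Corollary \ref{nrank} for all $n$ (for which no classical skein identity is available), whereas the paper's argument is specific to $n=2$. The price is that you must actually prove the multiplicativity lemma (true, but not in the paper --- it amounts to the splitting of the bead-sliding decorations $\rho=(\rho_1)_{13}(\rho_2)_{24}$ and of the character across the two factors) and the ribbon compatibility. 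On the latter, be careful with your phrase ``or equivalently'': the ribbon element acting by $c_V^2=q^{3/2}$ on this one module is weaker than its being transported to $v_1\ot v_2$ globally; what actually closes the argument is that the braided equivalence is automatically compatible with duals, hence transports $u$, so matching the scalar of $v$ (and hence of $G=u^{-1}v$, which you can confirm directly: $G$ acts by $\mathrm{diag}\{q^{-1},1,1,q\}=\mathrm{diag}\{q^{-1/2},q^{1/2}\}^{\ot 2}$) on the specific module $\bar{\mathscr{A}}_{(\ell-1)\ep_2,i_0(\ep_1+\ep_2)}$ suffices for \eqref{squ}. Your consistency check on braiding eigenvalues ($q^{-1/2},-q^{1/2},q^{3/2}$ as pairwise products of $q^{-1/4},-q^{3/4}$) is correct and matches \eqref{poly1} and \eqref{poly2}.
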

\begin{proof} We will give a skein-theoretic argument in generality for the computational result above.
In fact, combining relation (5.8) with Def. \ref{bmw} \& (5.9), we arrive at the skein relation of Dubrovnik type if set the quasi-idempotent $E_i=(q^{-1}{+}2{+}q)\,\mi_{\bar{\mathscr{A}}_{0,0}}$ in \eqref{skr} .

Recall the Dubrovnik skein relation gives rise to the {\itshape framed Dubrovnik polynomial}
$\widetilde{\mbox{Dubrovnik}}(L)(a,z)$, which is a regular isotopy invariant of unoriented links defined by
\[\xy
0;/r.1pc/:
(-6,6)*{};(-1.5,1.5)*{}**\dir{-};
(6,-6)*{};(1.5,-1.5)*{}**\dir{-};
(-6,-6)*{};(6,6)*{}**\dir{-};
\endxy-
\xy
0;/r.1pc/:
(-6,-6)*{};(-1.5,-1.5)*{}**\dir{-};
(6,6)*{};(1.5,1.5)*{}**\dir{-};
(-6,6)*{};(6,-6)*{}**\dir{-};
\endxy=
z\lb~\xy
0;/r.1pc/:
(-6,-6)*{};(-6,6)*{}**\crv{(0,0)};
(6,6)*{};(6,-6)*{}**\crv{(0,0)};
\endxy-
\xy
0;/r.1pc/:
(-6,-6)*{};(6,-6)*{}**\crv{(0,0)};
(6,6)*{};(-6,6)*{}**\crv{(0,0)};
\endxy
~\rb,
\]
\[\xy
0;/r.1pc/:
(-6,6)*{};(-1.5,1.5)*{}**\dir{-};
(5,-5)*{};(1.5,-1.5)*{}**\dir{-};
(-6,-6)*{};(5,5)*{}**\dir{-};
(5,5)*{};(5,-5)*{}**\crv{(10,10)&(10,-10)};
\endxy=a~\xy
0;/r.1pc/:
(-6,6)*{};(-6,-6)*{}**\crv{(0,0)};
\endxy~,~
\xy
0;/r.1pc/:
(-6,-6)*{};(-1.5,-1.5)*{}**\dir{-};
(5,5)*{};(1.5,1.5)*{}**\dir{-};
(-6,6)*{};(5,-5)*{}**\dir{-};
(5,5)*{};(5,-5)*{}**\crv{(10,10)&(10,-10)};
\endxy=a^{-1}~\xy
0;/r.1pc/:
(-6,6)*{};(-6,-6)*{}**\crv{(0,0)};
\endxy~,
\]
and the value of disjoint union of $m$ unknots is $\lb\tfrac{a-a^{-1}}{z}+1\rb^{m-1}$.
More explicitly, the regular invariant $\mb{Inv}_{D(\bar{\mathscr{A}}_2(q)),\bar{\mathscr{A}}_{(\ell-1)\ep_2,i_0(\ep_1+\ep_2)}}$ is a
specialization of $\widetilde{\mbox{Dubrovnik}}(L)(a,z)$ as follow,
\[\mb{Inv}_{D(\bar{\mathscr{A}}_2(q)),\bar{\mathscr{A}}_{(\ell-1)\ep_2,i_0(\ep_1+\ep_2)}}(L)
=\widetilde{\mbox{Dubrovnik}}(L)(q^{-3/2},q^{-1/2}-q^{1/2}).\]

By writhe-normalization in Prop.~\ref{wnor}, $\overline{\mb{Inv}}_{D(\bar{\mathscr{A}}_2(q)),\bar{\mathscr{A}}_{(\ell-1)\ep_2,i_0(\ep_1+\ep_2)}}$ is a specialization of the {\itshape unframed Dubrovnik polynomial}, an ambient isotopy invariant of oriented links, denoted $\mbox{Dubrovnik}(L)(a,z)$. That is,
\[\mbox{Dubrovnik}(L)(a,z)=a^{-\scriptsize\ms{Wr}(L)}\widetilde{\mbox{Dubrovnik}}(L)(a,z),\]
and in our case,
\begin{align*}
\overline{\mb{Inv}}_{D(\bar{\mathscr{A}}_2(q)),\bar{\mathscr{A}}_{(\ell-1)\ep_2,i_0(\ep_1+\ep_2)}}(L)
&=(q^{3/2})^{\scriptsize\ms{Wr}(L)}\mb{Inv}_{D(\bar{\mathscr{A}}_2(q)),\bar{\mathscr{A}}_{(\ell-1)\ep_2,i_0(\ep_1+\ep_2)}}(L)\\
&=\mbox{Dubrovnik}(L)(q^{-3/2},q^{-1/2}-q^{1/2}).
\end{align*}

On the other hand, we have seen that $\overline{\mb{Inv}}_{D(\bar{\mathscr{A}}_1(q)),\bar{\mathscr{A}}_{(i_0-1)\ep_1,i_0\ep_1}}$ recovers the Jones polynomial $V_L$ in Example~\ref{ex1}.
Consequently, relation \eqref{squ} is due to the following result \cite[Prop. 16.6]{Lic},
\[
\begin{split}
(V_L(t))^2&=\mbox{Dubrovnik}(L)\lb t^{-3/2},t^{-1/2}-t^{1/2}\rb\\
&=(-1)^{\sharp L-1}\mbox{Kauffman}(L)\lb\sqrt{-1}t^{-3/2},\sqrt{-1}\lb t^{1/2}-t^{-1/2}\rb\rb,
\end{split}
\]
where $\mbox{Kauffman}(L)(a,z)=a^{-\scriptsize\ms{Wr}(L)}\widetilde{\mbox{Kauffman}}(L)(a,z)$, and $\widetilde{\mbox{Kauffman}}(L)(a,z)$ is the {\itshape framed Kauffman polynomial}, defined by
\[\xy
0;/r.1pc/:
(-6,6)*{};(-1.5,1.5)*{}**\dir{-};
(6,-6)*{};(1.5,-1.5)*{}**\dir{-};
(-6,-6)*{};(6,6)*{}**\dir{-};
\endxy+
\xy
0;/r.1pc/:
(-6,-6)*{};(-1.5,-1.5)*{}**\dir{-};
(6,6)*{};(1.5,1.5)*{}**\dir{-};
(-6,6)*{};(6,-6)*{}**\dir{-};
\endxy=
z\lb~\xy
0;/r.1pc/:
(-6,-6)*{};(-6,6)*{}**\crv{(0,0)};
(6,6)*{};(6,-6)*{}**\crv{(0,0)};
\endxy+
\xy
0;/r.1pc/:
(-6,-6)*{};(6,-6)*{}**\crv{(0,0)};
(6,6)*{};(-6,6)*{}**\crv{(0,0)};
\endxy
~\rb,
\]
\[\xy
0;/r.1pc/:
(-6,6)*{};(-1.5,1.5)*{}**\dir{-};
(5,-5)*{};(1.5,-1.5)*{}**\dir{-};
(-6,-6)*{};(5,5)*{}**\dir{-};
(5,5)*{};(5,-5)*{}**\crv{(10,10)&(10,-10)};
\endxy=a~\xy
0;/r.1pc/:
(-6,6)*{};(-6,-6)*{}**\crv{(0,0)};
\endxy~,~
\xy
0;/r.1pc/:
(-6,-6)*{};(-1.5,-1.5)*{}**\dir{-};
(5,5)*{};(1.5,1.5)*{}**\dir{-};
(-6,6)*{};(5,-5)*{}**\dir{-};
(5,5)*{};(5,-5)*{}**\crv{(10,10)&(10,-10)};
\endxy=a^{-1}~\xy
0;/r.1pc/:
(-6,6)*{};(-6,-6)*{}**\crv{(0,0)};
\endxy~,
\]
and the value of disjoint union of $m$ unknots is $\lb\tfrac{a+a^{-1}}{z}-1\rb^{m-1}$.
%
\end{proof}

\medskip
Accordingly, when take the unique $2^n$ dimensional self-dual irreducible representation $(\bar{\mathscr{A}}_{\al_0,\be_0},\pi)$ of $D\lb\bar{\mathscr{A}}_q(n)\rb$ with $\ka(\al_0,\be_0)=\ep_1+\cdots+\ep_n$,
one can obtain the following results:
\begin{corollary}\label{nrank}
(a) The braiding $R$-matrix $R_{\al_0,\be_0}$ has
the minimal polynomial
\begin{equation}
\ms{minpoly}(R_{\al_0,\be_0})=\prod\limits_{j=0}^n\lb x-(-1)^jq^{\tfrac{-n+4j}{4}}\rb.
\end{equation}
On the other hand, the ribbon element $v$ acts by multiplying $q^{3n/4}$.

\smallskip
(b) The knot invariant of $D(\bar{\mathscr{A}}_n(q))$ just has the composite relation
\[\overline{\mb{Inv}}_{D(\bar{\mathscr{A}}_n(q)),\bar{\mathscr{A}}_{\al_0,\be_0}}
=\lb\overline{\mb{Inv}}_{D(\bar{\mathscr{A}}_1(q)),\bar{\mathscr{A}}_{(i_0-1)\ep_1,i_0\ep_1}}\rb^n.\]
\end{corollary}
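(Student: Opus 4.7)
The plan is to reduce the entire statement to the rank-one case of Example~\ref{ex1} by exploiting the Hopf 2-cocycle twist identification of Theorem~\ref{twist}. By the Masuoka--Oka theorem (as already applied in the proof of Proposition~\ref{subalg}), this twist induces an equivalence of braided monoidal categories
\[
{}_{D(\bar{\mathscr{A}}_q(n))}\mathcal {Y}\mathcal {D}^{D(\bar{\mathscr{A}}_q(n))}\;\simeq\;\bigl({}_{D(\bar{\mathscr{A}}_q(1))}\mathcal {Y}\mathcal {D}^{D(\bar{\mathscr{A}}_q(1))}\bigr)^{\boxtimes n}.
\]
By Theorem~\ref{sdual} the simple self-dual module $\bar{\mathscr{A}}_{\al_0,\be_0}$ with $\ka(\al_0,\be_0)=\ep_1+\cdots+\ep_n$ is the unique $2^n$-dimensional self-dual simple object on the rank-$n$ side, so under the equivalence it must correspond to the $n$-fold external tensor power $V^{\boxtimes n}$ of the two-dimensional self-dual simple module $V=\bar{\mathscr{A}}_{(i_0-1)\ep_1,i_0\ep_1}$ from Example~\ref{ex1}. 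The one technical point that needs verification is that this braided equivalence also transports the ribbon structure: when $\ell$ is odd, this follows from the uniqueness of the ribbon element on either side (Theorem~\ref{ribb} and its rank-one specialization), provided one matches the Drinfel'd element $u$ and the element $\ms{h}=u^{-1}s(u)$ across the cocycle twist, which is a direct computation on the common basis used in Theorem~\ref{twist}.

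For part (a), with the identification $\bar{\mathscr{A}}_{\al_0,\be_0}\cong V^{\boxtimes n}$ in hand, the braiding $R_{\al_0,\be_0}$ becomes conjugate---via the canonical permutation of the $2n$ tensor factors---to the ordinary tensor power $R_V^{\ot n}$, where $R_V$ denotes the rank-one braiding of Example~\ref{ex1}. From \eqref{poly1} the eigenvalues of $R_V$ are $q^{-1/4}$ and $-q^{3/4}$, so $R_{\al_0,\be_0}$ is diagonalizable with eigenvalues of the form $\prod_{i=1}^{n}\la_i$, $\la_i\in\{q^{-1/4},-q^{3/4}\}$. Grouping by the number $j$ of factors equal to $-q^{3/4}$ produces exactly the $n+1$ distinct eigenvalues $(-1)^{j}q^{(-n+4j)/4}$ for $0\le j\le n$, occurring with multiplicities $\binom{n}{j}$; the claimed minimal polynomial reads off immediately. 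The same tensor factorization applied to the ribbon element gives that $v$ acts on $V^{\boxtimes n}$ by $(q^{3/4})^{n}=q^{3n/4}$, again using Example~\ref{ex1}.

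For part (b), I would appeal to the fact (spelled out in the remark after Proposition~\ref{wnor} and in \cite[Thm.~4]{Rad2}) that the ambient invariant $\overline{\mb{Inv}}_{H,M}$ coincides with the Reshetikhin--Turaev invariant of oriented links colored by $M$, and is therefore a functorial invariant depending only on the ribbon category together with the distinguished object $M$. Under the braided ribbon equivalence above, evaluating the Reshetikhin--Turaev tangle functor on an oriented link $L$ colored by $V^{\boxtimes n}$ yields, in $\bigl(D(\bar{\mathscr{A}}_q(1))\text{-mod}\bigr)^{\boxtimes n}$, the $n$-fold external tensor product of the evaluations obtained by coloring $L$ with $V$. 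Since the scalar produced by the quantum trace on $\mathrm{End}(\mathbf{1})$ is multiplicative under external tensor product, and since the writhe normalization in Proposition~\ref{wnor} depends only on the diagram $L$, one concludes
\[
\overline{\mb{Inv}}_{D(\bar{\mathscr{A}}_n(q)),\bar{\mathscr{A}}_{\al_0,\be_0}}(L)=\bigl(\overline{\mb{Inv}}_{D(\bar{\mathscr{A}}_1(q)),V}(L)\bigr)^{n}
\]
for every oriented link $L$, which is the asserted identity. The main obstacle throughout is the bookkeeping needed to check that the cocycle-twist equivalence preserves the ribbon (and hence tangle-functor) structure; everything else is tensor-power formalism combined with the rank-one computation already carried out in Example~\ref{ex1}.
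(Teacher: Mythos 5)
The paper states Corollary \ref{nrank} without proof (``accordingly''), and the route you take --- pushing everything through the Hopf $2$-cocycle twist of Theorem \ref{twist}, the induced braided equivalence of Yetter--Drinfel'd categories from \cite[Thm.\ 2.7]{MO}, and then reading off eigenvalues, the ribbon scalar, and the link invariant from the external tensor power $V^{\boxtimes n}$ of the rank-one data of Example \ref{ex1} --- is precisely the mechanism the paper already invokes for $n=2$ in Proposition \ref{subalg}, rather than the skein-theoretic argument used to prove Theorem \ref{sqj}. Your eigenvalue bookkeeping in (a), the identification $v=v_1^{\ot n}$ via uniqueness of the ribbon element on both sides (Theorem \ref{ribb} and \cite[Cor.\ 3]{KR1} apply equally to $D(\bar{\mathscr{A}}_q(1)^{\ot n})\cong D(\bar{\mathscr{A}}_q(1))^{\ot n}$), and the multiplicativity of the Reshetikhin--Turaev evaluation under external tensor product in (b) are all sound.

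There is, however, one step that fails as written: you justify the identification $\bar{\mathscr{A}}_{\al_0,\be_0}\cong V^{\boxtimes n}$ by asserting that each side is \emph{the} unique $2^n$-dimensional self-dual simple object, and that is not what Theorem \ref{sdual} says, nor is it true. Theorem \ref{sdual} classifies self-dual simples by the tuple $r=\ka(\al,\be)\in\La_+^{\leq\ka}$ with $\dim=(r_1+1)\cdots(r_n+1)$; for $\ell\geq 5$ and $n\geq 2$ the tuples $r=(1,\dots,1)$ and $r=(3,1,\dots,1,0)$ both give dimension $2^n$, and correspondingly $W_3\boxtimes V^{\boxtimes (n-2)}\boxtimes \mathbf{1}$ is a second $2^n$-dimensional self-dual simple on the tensor-product side. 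So dimension plus self-duality does not pin the object down, and the inference ``it must correspond to $V^{\boxtimes n}$'' is unsupported. The conclusion is nevertheless correct and can be repaired: the cocycle $\si$ of Theorem \ref{twist} is supported on group-like elements, so the equivalence preserves the coradical grading, and the graded dimensions $\binom{n}{j}$ of $\bar{\mathscr{A}}_{\al_0,\be_0}$ coming from Proposition \ref{basis} (generating function $(1+t)^n$) single out $V^{\boxtimes n}$ among all self-dual simples of total dimension $2^n$; alternatively one can track the parametrization $(\be,g)\mapsto [H_{\be,g}]$ directly through the twist. One further small point worth recording: the stated minimal polynomial in (a) presupposes that the $n+1$ scalars $(-1)^jq^{(-n+4j)/4}$ are pairwise distinct, which for $\ell$ odd holds exactly when $n<2\ell$; this restriction is implicit in the paper as well (compare ``when $\ell$ is large enough'' in Example \ref{ex2}) but should be made explicit.
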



\bibliographystyle{amsalpha}


\end{document}